\newcommand{\al}{\alpha}
\newcommand{\be}{\beta}
\newcommand{\ga}{\gamma}
\newcommand{\de}{\delta}
\newcommand{\la}{\lambda}
\newcommand{\om}{\omega}
\newcommand{\vv}{\varphi}
\newcommand{\iy}{\infty}
\theoremstyle{plain}
\numberwithin{equation}{section}
\newtheorem{thm}{Theorem}[section]
\newtheorem{lem}[thm]{Lemma}
\newtheorem{prop}[thm]{Proposition}
\theoremstyle{definition}
\newtheorem{alg}[thm]{Algorithm}
\newtheorem{ip}[thm]{Inverse Problem}
\theoremstyle{remark}
\newtheorem{remark}[thm]{Remark}
\DeclareMathOperator*{\Res}{Res}
\DeclareMathOperator{\rank}{rank}
\DeclareMathOperator{\diag}{diag}
\DeclareMathOperator{\Ran}{Ran}
\DeclareMathOperator{\Ker}{Ker}
\begin{document}

\begin{center}
{\large\bf Direct and inverse problems for the matrix Sturm-Liouville operator with general self-adjoint boundary conditions}
\\[0.2cm]
{\bf Natalia P. Bondarenko} \\[0.2cm]
\end{center}

\vspace{0.5cm}

{\bf Abstract.} The matrix Sturm-Liouville operator on a finite interval with boundary conditions in the general self-adjoint form and with singular potential of class $W_2^{-1}$ is studied. This operator generalizes Sturm-Liouville operators on geometrical graphs. We investigate structural and asymptotical properties of the spectral data (eigenvalues and weight matrices) of this operator. Furthermore, we prove the uniqueness of recovering the operator from its spectral data, by using the method of spectral mappings.
  
\medskip

{\bf Keywords:} matrix Sturm-Liouville operator; singular potential; Sturm-Liouville operators on graphs; eigenvalue asymptotics; Riesz-basicity of eigenfunctions; inverse problem; uniqueness theorem. 

\medskip

{\bf AMS Mathematics Subject Classification (2010):} 34B09 34B24 34B45 34L10 34L20 34L40 34A55

\vspace{1cm}

\section{Introduction}

The paper is concerned with spectral theory of matrix Sturm-Liouville operators given by the differential expression $\ell Y = -Y'' + Q(x) Y$, where $Q(x) = [q_{jk}(x)]_{j, k = 1}^m$ is a matrix function called the {\it potential}. Such operators generalize scalar Sturm-Liouville operators, which have been studied fairly completely (see, e.g., the monographs \cite{Mar77, LS91, FY01}). In this paper, we consider the matrix Sturm-Liouville operator with boundary conditions in the general self-adjoint form. This operator causes interest, since it generalizes Sturm-Liouville operators on metric graphs. The latter operators are used for modeling various processes in graph-like structures in organic chemistry, mesoscopic physics, nanotechnology, microelectronics, and other applications (see \cite{Kuch04, PPP04, BCFK06, Now07} and references therein).

In order to provide the problem statement, denote by $\mathbb C^m$ and $\mathbb C^{m \times m}$ the spaces of complex $m$-vectors and $(m \times m)$-matrices, respectively. The notations $L_2((0, \pi); \mathbb C^m)$ and $L_2((0, \pi); \mathbb C^{m \times m})$ are used for the spaces of $m$-vector functions and $(m \times m)$-matrix functions, respectively, with elements from $L_2(0, \pi)$.

Consider the matrix Sturm-Liouville equation
\begin{equation} \label{eqvQ}
    -Y'' + Q(x) Y = \la Y, \quad x \in (0, \pi),
\end{equation}
where $Y = [y_j(x)]_{j = 1}^m$ is a vector function, $\la$ is the spectral parameter, $Q(x)$ is an $(m \times m)$ Hermitian matrix function with elements of class $W_2^{-1}(0, \pi)$, that is, $Q(x) = \sigma'(x)$, $\sigma \in L_2((0, \pi); \mathbb C^{m \times m})$, $\sigma(x) = (\sigma(x))^{\dagger}$, the symbol ``$\dagger$'' denotes the conjugate transpose. The derivatives of $L_2$-functions are understood in the sense of distributions.
Equation~\eqref{eqvQ} can be rewritten in the form
\begin{equation} \label{eqv}
    \ell Y := -(Y^{[1]})' - \sigma(x) Y^{[1]} - \sigma^2(x) Y = \la Y, \quad x \in (0, \pi),
\end{equation}
where $Y^{[1]}(x) := Y'(x) - \sigma(x) Y(x)$ is the {\it quasi-derivative}. Direct and inverse problem theory for the \textit{scalar} operators in the form~\eqref{eqv} has been developed in \cite{SS99, Sav01, SS01, Kor03, HM-sd, HM-trans, HM-2sp, HM-half, SS05, DM09, Hryn11, Mirz14, Gul19} and other studies.

Denote by $L$ the boundary value problem for equation~\eqref{eqv} with boundary conditions
\begin{gather}
   \label{bc1}
    V_1(Y) := T_1 (Y^{[1]}(0) - H_1 Y(0)) - T_1^{\perp} Y(0) = 0, \\ \label{bc2} V_2(Y) := T_2 (Y^{[1]}(\pi) - H_2 Y(\pi)) - T_2^{\perp} Y(\pi) = 0. 
\end{gather}
Here $T_j, T_j^{\perp}, H_j \in \mathbb C^{m \times m}$, $T_j$ are orthogonal projection matrices, that is, $T_j = T_j^{\dagger} = T_j^2$, $T_j^{\perp} = I - T_j$, $I$ is the unit matrix in $\mathbb C^{m \times m}$, $H_j = H_j^{\dagger} = T_j H_j T_j$, $j = 1, 2$. Under these assumptions, the problem $L$ is self-adjoint. We observe that, in the special cases $T_j = 0$ and $T_j = I$, the corresponding boundary condition turns into the Dirichlet and into the Robin boundary condition, respectively.

Denote by $\mathcal D(L)$ the space of $m$-vector functions $Y(x)$ such that the elements of $Y(x)$, $Y^{[1]}(x)$ are absolutely continuous on $[0, \pi]$, $\ell Y \in L_2((0, \pi); \mathbb C^{m})$, and $Y(x)$ satisfies \eqref{bc1}, \eqref{bc2}. The problem $L$ is related with the matrix Sturm-Liouville operator given by the differential expression $\ell Y$ on the domain $\mathcal D(L)$.

There is an extensive literature devoted to matrix Sturm-Liouville operators.
Asymptotic formulas and some other properties of spectral data have been obtained in \cite{Pap95, Car99, Vel07} for the second-order 
 matrix operators and in \cite{Pol19} for the fourth-order matrix operators. The uniqueness of recovering matrix Sturm-Liouville operators on a finite interval from various spectral characteristics has been proved in \cite{Car02, Mal05, Chab04, Yur06-uniq, Sh07, Xu19}. In \cite{Yur06}, a constructive algorithm was suggested for solving these inverse problems. Later on, the spectral data characterization for matrix Sturm-Liouville operators has been obtained in \cite{CK09, MT09, Bond11, Bond19}. The most of the mentioned studies concern operators with Dirichlet boundary conditions $Y(0) = Y(\pi) = 0$ and Robin boundary conditions $Y'(0) - H_1 Y(0) = Y'(\pi) + H_2 Y(\pi) = 0$. The boundary conditions \eqref{bc1}-\eqref{bc2} appear to be more difficult for investigation, because of more complicated behavior of the spectrum and structural properties. 
We know the only paper \cite{Xu19} on inverse problems for the matrix Sturm-Liouville operator with the general self-adjoint boundary conditions, but that paper is limited to uniqueness theorems (for square-integrable potential). It is also worth mentioning that direct and inverse scattering problems for the matrix Sturm-Liouville operator on the {\it half-line} with the boundary condition analogous to~\eqref{bc1} have been investigated in \cite{Har02, AW21}. Those studies generalize the approach of Agranovich and Marchenko \cite{AM63}, who considered the scattering problem with Dirichlet boundary condition. However, we find inverse problems for the matrix Sturm-Liouville operators on the half-line to be easier for investigation, than analogous problems on a finite interval, since the first problems have a bounded set of eigenvalues and there are no difficulties caused by asymptotic behavior of the spectrum.

The majority of studies on matrix Sturm-Liouville operators deal with the case of \textit{regular} potentials from the class $L_2$. Inverse problems for matrix Sturm-Liouville operators \textit{singular} potential of $W_2^{-1}$ on a finite interval were considered in the only paper \cite{MT09} by Mykytyuk and Trush. However, the authors of \cite{MT09} studied the operator in the form $-\left(\frac{d}{dx} + \tau \right)(\frac{d}{dx} - \tau) Y$, where $\tau$ is a square-integrable matrix function. This form differs from~\eqref{eqv} and can be conveniently reduced to a Dirac-type operator. 
The differential expression of Mykytyuk and Trush can be written in the form $-Y'' + Q(x) Y$ with Miura potential $Q = \tau' + \tau^2$ in the case of Dirichlet boundary conditions (but cannot for Neumann ones). 

We also mention that the reduction to Dirac-type operators analogous to \cite{MT09} was applied to matrix Sturm-Liouville operators with singular potentials on the half-line and on the line by Eckhardt at al \cite{Eckh14, Eckh15}. Some fundamental properties (maximal and minimal operator, deficiency indices, self-adjoint extensions, etc.) for matrix operators in our form \eqref{eqv} and in more general forms have been investigated by Weidmann \cite{Weid87} and by Mirzoev and Safonova \cite{MS11, MS16}.

The goal of this paper is two-fold. \textit{First}, we investigate properties of the spectral data (eigenvalues and weight matrices) of the problem $L$. Our spectral data generalize the classical spectral data $\{ \la_n, \al_n \}_{n \ge 1}$ of the scalar Sturm-Liouville operator $-y'' + q(x) y$, $q \in L_2(0, \pi)$, with boundary conditions $y(0) = y(\pi) = 0$, where $\{ \la_n \}_{n = 1}^{\iy}$ are the eigenvalues, $\{ y_n(x) \}_{n = 1}^{\iy}$ are the eigenfunctions normalized by the condition $y_n'(0) = 1$, and
$$
\al_n := \left( \int_0^{\pi} y_n^2(x) \, dx \right)^{-1}, \quad n \ge 1,
$$
(see \cite{Mar77, FY01}). The rigorous definition of the spectral data for the matrix Sturm-Liouville operator is provided in Section~2.
We study the structure and the asymptotic behavior of the spectral data, and also prove the completeness and the Riesz-basis property of a special sequence of vector functions constructed by the eigenvalues and the columns of the weight matrices. Such sequences play an important role in the spectral data characterization of the matrix Sturm-Liouville operators (see \cite{CK09, MT09, Bond19}). As a corollary, we show that the sequence of vector eigenfunctions of the problem $L$ is a Riesz basis. In particular, all these results are valid for the Sturm-Liouville operators on graphs with singular potentials and with rationally-dependent edge lengths.

\textit{Second}, we study the inverse problem that consists in recovering the potential and the boundary condition coefficients of the problem $L$ from the spectral data. We prove the corresponding uniqueness theorem, by developing the ideas of the method of spectral mappings \cite{FY01, Yur06, Bond-preprint}. We also discuss reconstruction of the potential from the Weyl matrix, consider the case of the square-integrable potential $Q(x)$, and compare our theorems with the known results. In the sequel study~\cite{Bond-sequel}, our approach gives a constructive solution of the inverse problem and the characterization of the spectral data.

The paper is organized as follows. In Section~2, we introduce the notions of Weyl matrix and weight matrices and study structural properties of the spectral characteristics. 
In Section~3, the spectral data of the problem $L$ with $\sigma = H_1 = H_2 = 0$ is explicitly found.
In Section~4, asymptotic formulas are derived for the eigenvalues, the weight matrices, and for solutions of equation~\eqref{eqv}. In Section~5, we prove the completeness and the Riesz-basis property of a special sequence of vector functions related to the problem $L$.
In Section~6, the inverse problems are studied and the corresponding uniqueness theorems are obtained. In the Appendix, we describe the reduction of the Sturm-Liouville eigenvalue problems on graphs to the matrix form~\eqref{eqv}-\eqref{bc2}.  

Throughout the paper, we use the notations:

\begin{itemize}
    \item $\rho := \sqrt{\la}$, $\arg \rho \in [-\tfrac{\pi}{2}, \tfrac{\pi}{2})$ (unless stated otherwise), $\tau := \mbox{Im}\, \rho$.
    \item We use the following vector norm in $\mathbb C^m$:
$$
    \| a \| = \left( \sum_{j = 1}^m |a_j|^2 \right)^{1/2}, \quad a = [a_j]_{j = 1}^m,    
$$
and the corresponding matrix norm  $\| A \| = s_{max}(A)$, where $s_{max}(A)$
is the maximal singular value of $A$.
    \item The scalar product in the Hilbert space
    $L_2((0, \pi); \mathbb C^m)$ is defined as follows:
    \begin{gather} \label{defscal}
    (Y, Z) = \int_0^{\pi} (Y(x))^{\dagger} Z(x) \, dx = \sum_{j = 1}^m \int_0^{\pi} \overline{y_j(x)} z_j(x) \, dx, \\ \nonumber Y = [y_j(x)]_{j = 1}^m, Z = [z_j(x)]_{j = 1}^m \in L_2((0, \pi); \mathbb C^m).
    \end{gather}
\end{itemize}

\section{Structural properties}

In this section, we introduce the notions of Weyl matrix and weight matrices and study the structure of the spectral characteristics of the problem $L$.

\begin{lem} \label{lem:sym}
For any functions $Y, Z \in \mathcal D(L)$, the relation $(\ell Y, Z) = (Y, \ell Z)$ holds. Thus, the operator induced by the differential expression $\ell$ and the boundary conditions~\eqref{bc1},\eqref{bc2} is symmetric, its eigenvalues are real, and vector eigenfunctions, corresponding to distinct eigenvalues, are orthogonal in $L_2((0, \pi); \mathbb C^m)$. 
\end{lem}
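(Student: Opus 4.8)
The plan is to establish a Green-type identity with boundary terms and then show those terms vanish because of the special structure of \eqref{bc1}--\eqref{bc2}; the asserted consequences (reality of eigenvalues, orthogonality of eigenfunctions) are then immediate. For $Y, Z \in \mathcal D(L)$ I would introduce the scalar function
\[
  \langle Y, Z \rangle(x) := (Y(x))^{\dagger} Z^{[1]}(x) - (Y^{[1]}(x))^{\dagger} Z(x).
\]
By the definition of $\mathcal D(L)$ the functions $Y$, $Z$, $Y^{[1]}$, $Z^{[1]}$ are absolutely continuous on $[0,\pi]$, so $\langle Y, Z \rangle$ is absolutely continuous there and the fundamental theorem of calculus applies to it. Differentiating and substituting $Y' = Y^{[1]} + \sigma Y$, $Z' = Z^{[1]} + \sigma Z$, the Hermiticity $\sigma = \sigma^{\dagger}$, and the relations $(Y^{[1]})' = -\ell Y - \sigma Y^{[1]} - \sigma^2 Y$, $(Z^{[1]})' = -\ell Z - \sigma Z^{[1]} - \sigma^2 Z$ (which are just a rearrangement of \eqref{eqv} and hold a.e.), a short computation shows that all terms containing $\sigma$ and $\sigma^2$ cancel, as do the terms $(Y^{[1]})^{\dagger} Z^{[1]}$, leaving
\[
  \frac{d}{dx}\langle Y, Z \rangle(x) = (\ell Y(x))^{\dagger} Z(x) - (Y(x))^{\dagger}(\ell Z(x)) \quad \text{for a.e. } x \in (0,\pi).
\]
Integrating over $(0,\pi)$ yields the Green formula $(\ell Y, Z) - (Y, \ell Z) = \langle Y, Z\rangle(\pi) - \langle Y, Z\rangle(0)$.

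The heart of the matter is to check $\langle Y, Z\rangle(0) = \langle Y, Z\rangle(\pi) = 0$; this is the step that genuinely uses the ``general self-adjoint form'' of the conditions. Since $T_1$ and $T_1^{\perp}$ are complementary orthogonal projections and $H_1 = T_1 H_1 T_1$ implies $T_1 H_1 = H_1 T_1 = H_1$, applying $T_1$ and then $T_1^{\perp}$ to $V_1(Y) = 0$ shows that \eqref{bc1} is equivalent to the pair of relations
\[
  T_1^{\perp} Y(0) = 0, \qquad T_1 Y^{[1]}(0) = H_1 Y(0),
\]
and likewise for $Z$. Writing $u = Y(0)$, $v = Y^{[1]}(0)$, $\tilde u = Z(0)$, $\tilde v = Z^{[1]}(0)$, so that $u = T_1 u$, $\tilde u = T_1 \tilde u$, $T_1 v = H_1 u$, $T_1 \tilde v = H_1 \tilde u$, and using $T_1 = T_1^{\dagger}$, $H_1 = H_1^{\dagger}$, one gets
\[
  \langle Y, Z\rangle(0) = u^{\dagger}\tilde v - v^{\dagger}\tilde u = u^{\dagger} T_1 \tilde v - (T_1 v)^{\dagger}\tilde u = u^{\dagger} H_1 \tilde u - (H_1 u)^{\dagger}\tilde u = 0.
\]
The identical computation at the right endpoint, with $T_2$, $T_2^{\perp}$, $H_2$ replacing $T_1$, $T_1^{\perp}$, $H_1$, gives $\langle Y, Z\rangle(\pi) = 0$. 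Hence $(\ell Y, Z) = (Y, \ell Z)$, i.e.\ the operator is symmetric.

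The consequences follow in the usual way. If $\ell Y = \la Y$ for some $0 \neq Y \in \mathcal D(L)$, then symmetry gives $\la(Y,Y) = (\ell Y, Y) = (Y, \ell Y) = \overline{\la}(Y,Y)$, and $(Y,Y) > 0$ forces $\la = \overline{\la} \in \mathbb R$. If in addition $\ell Z = \mu Z$ with (real) $\mu \neq \la$, then $\la(Y,Z) = (\ell Y, Z) = (Y, \ell Z) = \mu(Y,Z)$, so $(\la - \mu)(Y,Z) = 0$ and therefore $(Y,Z) = 0$.

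I expect the only point needing genuine care to be the first step: because $Q = \sigma'$ is a distribution, the familiar double integration by parts for $-Y'' + QY$ is unavailable, and one must work throughout with the regularized form \eqref{eqv} and the quasi-derivative, verifying that $\langle Y, Z\rangle$ is absolutely continuous (so that the integration is legitimate, which is exactly what the definition of $\mathcal D(L)$ secures) and that the cancellation of the $\sigma$- and $\sigma^2$-terms is exact. The boundary computation is short, but it is the place where the form of \eqref{bc1}--\eqref{bc2} is used, the essential input being the equivalent splitting of each boundary condition into a linear constraint on $Y$ and a linear constraint on $T_j Y^{[1]}$.
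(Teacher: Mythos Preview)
Your proof is correct and follows essentially the same approach as the paper: derive the Green identity $(\ell Y, Z) - (Y, \ell Z) = \langle Y, Z\rangle\big|_0^{\pi}$ and then use the equivalent splitting $T_j^{\perp} Y = 0$, $T_j Y^{[1]} = H_j Y$ of each boundary condition together with $H_j = H_j^{\dagger}$ to kill the boundary terms. The only cosmetic difference is that the paper obtains the Green identity by integrating $(\ell Y, Z)$ by parts, whereas you differentiate $\langle Y, Z\rangle(x)$ directly and integrate; these are two sides of the same computation.
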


\begin{proof}
Consider arbitrary vector functions $Y, Z \in \mathcal D(L)$. Using~\eqref{eqv}, \eqref{defscal} and integration by parts, we obtain
\begin{align} \nonumber
(\ell Y, Z) & = -\int_0^{\pi} ((Y^{[1]})')^{\dagger} Z \, dx - \int_0^{\pi} (Y^{[1]})^{\dagger} \sigma Z \, dx - \int_0^{\pi} Y^{\dagger}\sigma^2 Z \, dx \\ \label{sm1}
& = -(Y^{[1]})^{\dagger} Z \Big|_0^{\pi} + \int_0^{\pi} (Y^{[1]})^{\dagger} Z^{[1]}\, dx - \int_0^{\pi} Y^{\dagger}\sigma^2 Z \, dx 
= (Y^{\dagger} Z^{[1]} - (Y^{[1]})^{\dagger} Z) \Big|_0^{\pi} + (Y, \ell Z)
\end{align}
The boundary condition~\eqref{bc1} yields
$$
T_1^{\perp} Y(0) = 0, \quad T_1 Y^{[1]}(0) = H_1 Y(0).
$$
The similar relations also hold for $Z$. Consequently, we have
\begin{align*}
(Y(0))^{\dagger} Z^{[1]}(0) - (Y^{[1]}(0))^{\dagger} Z(0) & = 
(Y(0))^{\dagger} (T_1 + T_1^{\perp}) Z^{[1]}(0) - (Y^{[1]}(0))^{\dagger} (T_1 + T_1^{\perp}) Z(0) \\ & = (Y(0))^{\dagger} H_1 Z(0) - (Y(0))^{\dagger} H_1 Z(0) = 0.
\end{align*}
Analogously, the substitution at $x = \pi$ in \eqref{sm1} also vanishes, so relation~\eqref{sm1} yields the claim.
\end{proof}

Let $\vv(x, \la)$ be the matrix solution of equation~\eqref{eqv} satisfying the initial conditions $\vv(0, \la) = T_1$, $\vv^{[1]}(0, \la) = T_1^{\perp} + H_1$. Clearly, $V_1(\vv) = 0$. For each fixed $x \in [0, \pi]$, the matrix functions $\vv(x, \la)$ and $\vv^{[1]}(x, \la)$ are entire in the $\la$-plane.

\begin{lem} \label{lem:mult}
The eigenvalues of the boundary value problem $L$ coincide with the zeros of the characteristic function $\Delta(\la) := \det(V_2(\vv(x, \la)))$ counting with their multiplicities. This means that, for every eigenvalue, the multiplicity of the zero of the analytical function $\Delta(\la)$ equals the number of linearly independent vector eigenfunctions corresponding to this eigenvalue. 
\end{lem}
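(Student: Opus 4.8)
The plan is to show two inequalities: that the multiplicity $m_0$ of $\la_0$ as a zero of $\Delta(\la)$ equals the geometric multiplicity $d$ of $\la_0$ as an eigenvalue of $L$. The starting point is the observation that, by construction, $\vv(x,\la)$ is a matrix solution of \eqref{eqv} with $V_1(\vv(\cdot,\la)) = 0$ for all $\la$, so a column $\vv(x,\la_0) c$ (with $c \in \mathbb C^m$) is an eigenfunction of $L$ precisely when $V_2(\vv(\cdot,\la_0)) c = 0$, i.e.\ $c \in \Ker V_2(\vv(\cdot,\la_0))$. First I would check that \emph{every} eigenfunction arises this way: if $LY = \la_0 Y$, then $Y$ solves \eqref{eqv} and satisfies \eqref{bc1}, and since $\vv(x,\la_0)$ together with a second, linearly independent matrix solution spans the solution space, the condition $V_1(Y)=0$ forces $Y(x) = \vv(x,\la_0) c$ for some $c$ (this uses that the columns of $\vv(\cdot,\la_0)$ span exactly the solutions satisfying $V_1 = 0$, which in turn follows from $\rank$ considerations on $T_1$ and the structure of the initial data $\vv(0,\la_0)=T_1$, $\vv^{[1]}(0,\la_0)=T_1^\perp + H_1$). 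Consequently $d = \dim \Ker V_2(\vv(\cdot,\la_0)) = m - \rank V_2(\vv(\cdot,\la_0))$.

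Next I would relate $m_0$ to the same rank. Write $M(\la) := V_2(\vv(\cdot,\la))$, an entire $(m\times m)$ matrix function with $\Delta(\la) = \det M(\la)$. By the Smith normal form (or, more elementarily, by row/column reduction over the ring of functions analytic near $\la_0$), there exist matrix functions $A(\la), B(\la)$, analytic and invertible near $\la_0$, such that $A(\la) M(\la) B(\la) = \diag\big((\la-\la_0)^{k_1}, \dots, (\la-\la_0)^{k_m}\big)$ with $0 \le k_1 \le \dots \le k_m$. Then $\Delta(\la) = \det A(\la)^{-1} \det B(\la)^{-1} (\la-\la_0)^{k_1 + \dots + k_m}$, so $m_0 = \sum_j k_j$, while the number of indices $j$ with $k_j \ge 1$ equals $\rank M(\la)$ evaluated at a generic point near $\la_0$ minus nothing — more precisely, $m - \#\{j : k_j = 0\} = m - \rank M(\la_0) = d$. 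This already gives $m_0 \ge d$, with equality iff all the nonzero $k_j$ equal $1$.

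The main obstacle is therefore the reverse inequality $m_0 \le d$, equivalently showing that all elementary divisors of $M(\la)$ at $\la_0$ are simple (each nonzero $k_j = 1$). This is where self-adjointness must enter — for a general analytic matrix family it is false. The tool I would use is the variation of the characteristic function, via the standard identity for derivatives of solutions with respect to $\la$: differentiating \eqref{eqv} in $\la$ gives $\partial_\la \vv$ satisfying an inhomogeneous equation, and a Lagrange-type bracket computation (as in \eqref{sm1}) yields a formula of the form $\dfrac{d}{d\la}\big[\vv^\dagger(\pi,\bar\la)\,\vv^{[1]}(\pi,\la) - (\vv^{[1]})^\dagger(\pi,\bar\la)\,\vv(\pi,\la)\big] = \int_0^\pi \vv^\dagger(x,\bar\la)\,\vv(x,\la)\,dx$ (up to boundary terms at $0$, which vanish by the choice of initial data and the self-adjoint form of \eqref{bc1}). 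Combining this with the boundary form of $V_2$ produces a relation showing that on $\Ker M(\la_0)$ the derivative $\Delta'$-type data is governed by a positive-definite Gram matrix $\int_0^\pi (\vv(x,\la_0)c)^\dagger (\vv(x,\la_0)c')\,dx$, which is nonsingular on that kernel since the corresponding eigenfunctions $\vv(\cdot,\la_0)c$ are linearly independent. This nondegeneracy forces every nonzero elementary divisor to be linear, hence $m_0 = d$. I expect the bookkeeping in this last step — correctly tracking which boundary terms survive and identifying the Gram matrix — to be the technical heart of the argument; the rest is linear algebra and the elementary theory of analytic matrix functions.
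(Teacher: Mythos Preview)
The paper does not give its own proof of this lemma; it simply cites Naimark's monograph \cite{Nai68} for the general theory and points to \cite[Lemma~3]{Bond11}, \cite[Lemma~5]{Bond19}, \cite[Proposition~3.1]{Xu19} for direct arguments in closely related settings. Your proposal is therefore not competing with a proof in the paper but supplying one, and the strategy you outline is exactly the standard one used in those references: identify the eigenspace at $\la_0$ with $\Ker V_2(\vv(\cdot,\la_0))$ (here you correctly use that the $m$ columns of $\vv(\cdot,\la_0)$ are linearly independent and span precisely the solutions of \eqref{eqv} annihilated by $V_1$), obtain $m_0 \ge d$ from the local Smith form of the entire matrix $\la \mapsto V_2(\vv(\cdot,\la))$, and then use self-adjointness via a Lagrange/Wronskian identity to force all nonzero elementary divisors to be simple.

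Two small remarks on the execution. First, the displayed derivative formula you wrote is slightly off: one should fix one argument at $\la_0$ and differentiate in the other (or, equivalently, use the two-parameter identity $(\la-\mu)\int_0^\pi \vv^\dagger(x,\mu)\vv(x,\la)\,dx = \langle \vv^\dagger(\cdot,\mu),\vv(\cdot,\la)\rangle\big|_{x=\pi}$ for real $\la,\mu$ and let $\mu\to\la_0$), rather than differentiating $\bar\la$ and $\la$ simultaneously. The boundary term at $x=0$ vanishes by the initial data and the relations $T_1 H_1 T_1 = H_1 = H_1^\dagger$, as you anticipated. Second, the final nondegeneracy step is cleanest phrased as: for $c,c'\in\Ker V_2(\vv(\cdot,\la_0))$ one obtains $c^\dagger\!\int_0^\pi \vv^\dagger\vv\,dx\,c' $ expressed through $\partial_\la V_2(\vv)\,c'$ paired against boundary data of $\vv(\cdot,\la_0)c$; if some elementary divisor had order $\ge 2$ there would be a nonzero $c$ in the kernel with $\partial_\la V_2(\vv(\cdot,\la_0))c \in \Ran V_2(\vv(\cdot,\la_0))$, forcing the Gram form to vanish on $c$, contradicting linear independence of the eigenfunctions. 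With these minor clarifications your plan is complete and correct.
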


Lemma~\ref{lem:mult} follows from the general theory of linear differential operators provided in the book of Naimark \cite{Nai68} (see Chapter I, \S 2, p.3 and Chapter III, \S 1, p.7). In addition, one can prove Lemma~\ref{lem:mult} similarly to \cite[Lemma~3]{Bond11}, \cite[Lemma~5]{Bond19} or \cite[Proposition~3.1]{Xu19}. 

Below, speaking about the roots of an analytic function or about the eigenvalues of some problem, we always count each value the number of times equal to its multiplicity.

\begin{thm} \label{thm:asymptla}
The spectrum of $L$ is a countable set of real eigenvalues $\{ \la_{nk} \}_{(n, k) \in J}$, numbered in non-decreasing order: $\la_{n_1 k_1} \le \la_{n_2 k_2}$ if $(n_1, k_1) < (n_2, k_2)$. The following asymptotic relation holds:
\begin{equation} \label{asymptla}
    \rho_{nk} := \sqrt{\la_{nk}} = n + r_k + \varkappa_{nk}, \quad (n, k) \in J, \quad \{ \varkappa_{nk} \} \in l_2,
\end{equation}
where
\begin{gather} \nonumber
J := \{ (n, k) \colon n \in \mathbb N, \, k = \overline{1, m} \} \cup \{ (0, k) \colon k = \overline{p^{\perp} + 1, m} \}, \\ \label{defpperp} p^{\perp} := \dim(\Ker T_1 \cap \Ker T_2),
\end{gather}
$\{ r_k \}_{k = 1}^m$ are the zeros of the function $w^0(\rho) := \det(W^0(\rho))$ on $[0, 1)$,
\begin{equation} \label{defW0}
W^0(\rho) := (T_2 T_1 + T_2^{\perp} T_1^{\perp}) \sin \rho \pi + (T_2^{\perp} T_1 - T_2 T_1^{\perp}) \cos \rho \pi.
\end{equation}
\end{thm}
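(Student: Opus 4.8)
The plan is to realise the $\rho_{nk}=\sqrt{\la_{nk}}$ as the zeros of a \emph{normalised characteristic matrix} $W(\rho)$ whose determinant equals $\Delta(\la)$ up to a power of $\rho$, and to compare $W(\rho)$ with the explicit matrix $W^0(\rho)$ of~\eqref{defW0}. First I would record the asymptotics of $\vv,\vv^{[1]}$ at $x=\pi$. Writing~\eqref{eqv} as a first-order system for $(\vv,\vv^{[1]})$ and iterating the Volterra integral equation against the free propagator one gets, uniformly on each strip $|\tau|\le C$,
\begin{gather*}
\vv(\pi,\la)=T_1\cos\rho\pi+(T_1^\perp+H_1)\tfrac{\sin\rho\pi}{\rho}+\mathcal A(\rho),\\
\vv^{[1]}(\pi,\la)=-\rho\,T_1\sin\rho\pi+(T_1^\perp+H_1)\cos\rho\pi+\mathcal B(\rho),
\end{gather*}
where $\mathcal A,\mathcal B$ are Fourier-type integrals of $L_2$-functions on a finite interval (this is where $\sigma\in L_2$ is used): $\|\mathcal A(\rho)\|=o(e^{|\tau|\pi})$, $\|\mathcal B(\rho)\|=o(|\rho|e^{|\tau|\pi})$, moreover $\mathcal A(\rho)T_1^\perp=O(\rho^{-1}e^{|\tau|\pi})$, and along any sequence $\rho_n=n+O(1)$ with bounded $\tau_n$ the renormalised remainders $e^{-|\tau_n|\pi}\mathcal A(\rho_n)$ and $\rho_n^{-1}e^{-|\tau_n|\pi}\mathcal B(\rho_n)$ form $l_2$-sequences.

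Now set $W(\rho):=-(\tfrac1\rho T_2+T_2^\perp)\,V_2(\vv(\cdot,\la))\,(T_1+\rho T_1^\perp)$. Since $V_2(YM)=V_2(Y)M$ for constant $M$, and $\det(\tfrac1\rho T_2+T_2^\perp)=\rho^{-\rank T_2}$, $\det(T_1+\rho T_1^\perp)=\rho^{m-\rank T_1}$, we get $\det W(\rho)=(-1)^m\rho^{m-\rank T_1-\rank T_2}\Delta(\la)$; hence, by Lemma~\ref{lem:mult}, the nonzero zeros of $\det W$ are exactly the $\rho_{nk}$. Substituting the formulas above — with $H_1=T_1H_1T_1$, $H_2=T_2H_2T_2$ used to push all $H_j$-terms into an $O(\rho^{-1}e^{|\tau|\pi})$ remainder, and the bound on $\mathcal A(\rho)T_1^\perp$ used to keep $\mathcal A(\rho)(T_1+\rho T_1^\perp)$ of size $o(e^{|\tau|\pi})$ — one obtains $W(\rho)=W^0(\rho)+E(\rho)$ with $\|E(\rho)\|=o(e^{|\tau|\pi})$ uniformly on $|\tau|\le C$. (The only delicate point here is the bookkeeping of which projectors the remainders carry.)

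Next I would analyse $w^0:=\det W^0$. From $\sin((\rho+1)\pi)=-\sin\rho\pi$, $\cos((\rho+1)\pi)=-\cos\rho\pi$ one has $W^0(\rho+1)=-W^0(\rho)$, so $w^0$ is $(-1)^m$-periodic with period $1$. Writing $W^0(\rho)=\tfrac12 M_- e^{i\rho\pi}+\tfrac12 M_+ e^{-i\rho\pi}$ with $M_\pm:=\pm i(T_2T_1+T_2^\perp T_1^\perp)+(T_2^\perp T_1-T_2T_1^\perp)$, the identity $iT_1-T_1^\perp=i(T_1+iT_1^\perp)$ gives $M_\pm=(\pm iT_2+T_2^\perp)(T_1\pm iT_1^\perp)$, a product of invertible matrices. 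Hence $w^0(\rho)=e^{-im\rho\pi}P(e^{i\rho\pi})$ with $P$ a polynomial of exact degree $2m$ and $P(0)\ne0$, so $w^0$ has $2m$ zeros per period of length $2$, i.e. $m$ per unit interval, and $|w^0(\rho)|\asymp e^{m|\tau|\pi}$ as $|\tau|\to\iy$. Since the problem with $\sigma=H_1=H_2=0$ is self-adjoint with nonnegative spectrum (its quadratic form is $\|Y'\|^2$), all zeros of $w^0$ are real, hence they are exactly $\{n+r_k:n\in\mathbb Z,\ k=\overline{1,m}\}$, where $r_1\le\dots\le r_m$ are the $m$ zeros in $[0,1)$, and $|w^0(\rho)|\ge C_\de e^{m|\tau|\pi}$ on $G_\de:=\{\rho:|\rho-(n+r_k)|\ge\de\ \forall\,(n,k)\}$. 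On $G_\de$ one then has $\|(W^0(\rho))^{-1}E(\rho)\|=o(1)$ as $|\rho|\to\iy$, so $\det W=\det W^0\cdot\det(I+(W^0)^{-1}E)$ has no large-modulus zeros in $G_\de$; applying Rouch\'e's theorem on the circles $|\rho-(n+r_k)|=\de$ (enclosing the whole cluster when some $r_k$ coincide) gives, for all large $n$, exactly $m$ eigenvalue-$\rho$'s near $n+r_1,\dots,n+r_m$, that is $\rho_{nk}=n+r_k+o(1)$. To upgrade $o(1)$ to an $l_2$-term, substitute $\rho=\rho_{nk}$ into the scalar identity $w^0(\rho)+(\det W(\rho)-\det W^0(\rho))=0$, Taylor-expand $w^0$ about $n+r_k$ (about the cluster, via a Vieta/resultant argument, if several $r_k$ agree), and use the $l_2$-property of the remainders; this forces $\{\varkappa_{nk}\}\in l_2$.

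Finally the index set $J$ is determined by counting the finitely many small eigenvalues. Comparing, by the argument principle on a large circle, the eigenvalue counting function with the counting function of the points $n+r_k$, the two differ by exactly $p^\perp$, with the discrepancy localised at $\rho=0$: indeed $\Ker T_1\cap\Ker T_2$ is $W^0(\rho)$-invariant with $W^0(\rho)|_{\Ker T_1\cap\Ker T_2}=(\sin\rho\pi)I$, so $w^0$ carries $p^\perp$ spurious zeros at every integer coming from the Dirichlet--Dirichlet block, whose eigenvalues are $\{n^2:n\ge1\}$ only; removing these $p^\perp$ zeros at $\rho=0$ leaves precisely the truncated $n=0$ row in~\eqref{defpperp}. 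The main obstacle I expect is exactly this bookkeeping near $\la=0$ — that the $n=0$ row has $m-p^\perp$ entries and that the global numbering matches $J$ — since the power-of-$\rho$ normalisation and the degeneracies allowed by general self-adjoint boundary conditions interact there; a secondary technical point is the $l_2$-estimate of the remainders, which amounts to Bessel's inequality for the (Riesz-type) system $\{e^{i\rho_{nk}t}\}$ in $L_2(0,2\pi)$, equivalently to $l_2$-summability of the Fourier integrals of $\sigma$.
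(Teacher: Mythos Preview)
Your overall strategy coincides with the paper's: normalise $V_2(\vv)$ to $W(\rho)=W^0(\rho)+(\text{small remainder})$, compare with $W^0$ via Rouch\'e, then refine $o(1)$ to $l_2$. Your factorisation $M_\pm=(\pm iT_2+T_2^\perp)(T_1\pm iT_1^\perp)$ is a clean alternative to the paper's representation $w^0(\rho)=(\sin\rho\pi)^{d_1}(\cos\rho\pi)^{d_2}P_{d_3}(\cos^2\rho\pi)$; both give exactly $m$ real zeros of $w^0$ per unit interval and the key bound $\|(W^0(\rho))^{-1}\|\le C_\de e^{-|\tau|\pi}$ on $G_\de$. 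The paper also invokes a matrix Rouch\'e lemma rather than your scalar $\det W=\det W^0\cdot\det(I+(W^0)^{-1}E)$, but these are equivalent.

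The $l_2$ step is where the two executions genuinely differ, and your sketch has a gap. Working at the scalar level, if $r_k$ has multiplicity $\mu>1$ then $w^0$ has a zero of order $\mu$ at $n+r_k$, and a Vieta argument yields only that the elementary symmetric functions $e_j(\varkappa_{n,\cdot})$ lie in $l_2$; from $e_1\in l_2$ alone one cannot conclude that each individual $\varkappa_{n,s}\in l_2$ (cancellations are possible). One can push through with Newton's identities, but it requires controlling $e_2,\dots,e_\mu$ in $l_1$ as well. The paper avoids this entirely by staying at the matrix level: it uses that $(W^0(\rho))^{-1}$ has a \emph{simple} pole at each $r_k$ regardless of the multiplicity of $r_k$ as a zero of $w^0$ (this is a consequence of Lemma~\ref{lem:ranks} applied to $L^0$), takes a unit null-vector $y_{nk}$ of $W(\rho_{nk})$, and extracts $\varkappa_{nk}$ directly from $(\varkappa_{nk}^{-1}R_{-1}+R_0)\,W(\rho_{nk})\,y_{nk}=0$, where $R_{-1},R_0$ are the Laurent coefficients of $(W^0)^{-1}$ at $r_k$. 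This handles clusters uniformly with no combinatorics.

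Your counting argument for $J$ is also incomplete. You correctly note that $\Ker T_1\cap\Ker T_2$ is $W^0(\rho)$-invariant with restriction $(\sin\rho\pi)I$, whence $(\sin\rho\pi)^{p^\perp}$ divides $w^0$. But to conclude the discrepancy at $\rho=0$ is \emph{exactly} $p^\perp$, you must also show the remaining quotient factor contributes no further spurious zeros there; your DD-block heuristic does not address this. The paper settles it by two explicit computations in the zero case: the multiplicity of $\la=0$ as an eigenvalue of $L^0$ equals $p:=\dim(\Ran T_1\cap\Ran T_2)$ (Lemma~\ref{lem:zero1}), while the order of $\rho=0$ as a zero of $w^0$ equals $p+p^\perp$ (Lemma~\ref{lem:zero2}). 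Their difference $p^\perp$ then gives the truncated $n=0$ row of $J$.
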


Theorem~\ref{thm:asymptla} is proved is Section 4.
Now we proceed to define the weight matrices. Consider the boundary condition
\begin{equation} \label{bc3}
V_1^{\perp}(Y) := T_1 Y(0) + T_1^{\perp} Y^{[1]}(0) = 0.
\end{equation}

Let $\psi(x, \la)$ be the matrix solution of equation~\eqref{eqv} satisfying the initial conditions $\psi(0, \la) = -T_1^{\perp}$, $\psi^{[1]}(0, \la) = T_1$. One can easily check that $V_1^{\perp}(\psi) = 0$, $V_1(\psi) = I$, $V_1^{\perp}(\vv) = I$.

The {\it Weyl solution} of $L$ is the matrix solution $\Phi(x, \la)$ of equation~\eqref{eqv} satisfying the boundary conditions $V_1(\Phi) = I$, $V_2(\Phi) = 0$. The matrix function $M(\la) := V_1^{\perp}(\Phi)$ is called the {\it Weyl matrix} of the problem $L$. The notion of Weyl matrix generalizes Weyl function, which is a natural spectral characteristic in inverse problem theory (see \cite{Mar77, FY01}).

One can easily derive the relations
\begin{gather} \label{relPhi}
    \Phi(x, \la) = \psi(x, \la) + \vv(x, \la) M(\la), \\ \label{relM}
    M(\la) = -(V_2(\vv))^{-1} V_2(\psi), \\ \label{PhiPsi}
    \Phi(x, \la) = \Psi(x, \la) (V_1(\Psi))^{-1}, 
\end{gather}
where $\Psi(x, \la)$ is the solution of equation~\eqref{eqv} under the initial conditions $\Psi(\pi, \la) = T_2$, $\Psi^{[1]}(\pi, \la) = T_2^{\perp} + H_2$.
It follows from~\eqref{relPhi} and~\eqref{relM} that the matrix functions $M(\la)$ and $\Phi(x, \la)$ for each fixed $x \in [0, \pi]$ are meromorphic in the $\la$-plane with the poles at the eigenvalues of $L$.

\begin{lem} \label{lem:ranks}
All the poles of $M(\la)$ are simple, and the ranks of the residue-matrices coincide with the multiplicities of the corresponding eigenvalues of $L$.
\end{lem}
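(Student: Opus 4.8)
The plan is to combine two applications of the Lagrange identity from the proof of Lemma~\ref{lem:sym} (once with two columns of the Weyl solution, once with an eigenfunction against a column of $\Phi$) with an analysis of the residue of $\Phi(x,\la)$ at an eigenvalue.

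First I would start from relation~\eqref{sm1}, $(\ell Y, Z) = (Y^{\dagger}Z^{[1]} - (Y^{[1]})^{\dagger}Z)\big|_0^{\pi} + (Y, \ell Z)$, and take $Y$ a column of $\Phi(\cdot,\la)$ and $Z$ a column of $\Phi(\cdot,\mu)$. Since $V_2(\Phi) = 0$, the substitution at $x = \pi$ vanishes exactly as in Lemma~\ref{lem:sym}; since $V_1(\Phi) = I$ and $\Phi = \psi + \vv M$ by~\eqref{relPhi}, the substitution at $x = 0$ reduces, after using $H_1 = T_1 H_1 T_1$, $T_1^2 = T_1$, $T_1 T_1^{\perp} = 0$, to $M(\la)^{\dagger} - M(\mu)$. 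Assembling over columns gives the matrix identity
\[
(\bar\la - \mu)\int_0^{\pi}\Phi(x,\la)^{\dagger}\Phi(x,\mu)\,dx = M(\mu) - M(\la)^{\dagger}.
\]
With $\mu = \bar\la$ this yields the symmetry $M(\bar\la) = M(\la)^{\dagger}$, and with $\mu = \la$ it gives $\mathrm{Im}\,M(\la) = -(\mathrm{Im}\,\la)\int_0^{\pi}\Phi(x,\la)^{\dagger}\Phi(x,\la)\,dx$, so $-M(\la)$ is a matrix-valued Herglotz function. By~\eqref{relM}, $M(\la) = -(V_2(\vv(\cdot,\la)))^{-1}V_2(\psi(\cdot,\la))$ is meromorphic on $\mathbb C$ with poles only at the (real) eigenvalues of $L$; the known local structure of meromorphic matrix Herglotz functions then forces each pole to be simple (with Hermitian, sign-definite residue), which proves the first assertion. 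Alternatively, simplicity can be obtained by verifying $\dim\Ker V_2(\vv(\cdot,\la_0)) = r = $ (the order of the zero of $\Delta$ at $\la_0$) and passing to the local Smith normal form of $V_2(\vv(\cdot,\la))$.

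Now fix an eigenvalue $\la_0$ of multiplicity $r$; by Lemma~\ref{lem:mult}, $r = \dim E_0$ where $E_0$ is the eigenspace at $\la_0$. Since the pole is simple and $\psi,\vv$ are entire, $\Theta(x) := \Res_{\la = \la_0}\Phi(x,\la) = \vv(x,\la_0)\,\al_0$ with $\al_0 := \Res_{\la = \la_0}M(\la)$. Matching Laurent coefficients in $\ell\Phi = \la\Phi$ gives $\ell\Theta = \la_0\Theta$, and passing to residues in $V_1(\Phi) = I$, $V_2(\Phi) = 0$ gives $V_1(\Theta) = V_2(\Theta) = 0$; hence the columns of $\Theta$ lie in $E_0$. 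Moreover $c\mapsto\vv(\cdot,\la_0)c$ is injective (if $\vv(\cdot,\la_0)c\equiv 0$ then $T_1 c = 0$ and $(T_1^{\perp}+H_1)c = 0$, whence $c = 0$), so the span of the columns of $\Theta = \vv(\cdot,\la_0)\al_0$ has dimension $\rank\al_0$; therefore $\rank\al_0 \le r$. For the reverse inequality I would apply the Lagrange identity to an eigenfunction $y\in E_0$ and a column of $\Phi(\cdot,\la)$: using $V_2(y) = V_2(\Phi) = 0$ at $x=\pi$ and $V_1(y) = 0$, $V_1(\Phi) = I$ at $x = 0$, the substitution collapses to $(V_1^{\perp}(y))^{\dagger}$, so $(\la_0-\la)\int_0^{\pi}y(x)^{\dagger}\Phi(x,\la)\,dx = -(V_1^{\perp}(y))^{\dagger}$; multiplying by $(\la - \la_0)$ and letting $\la\to\la_0$ yields
\[
\int_0^{\pi}y(x)^{\dagger}\Theta(x)\,dx = (V_1^{\perp}(y))^{\dagger}.
\]
If $y\ne 0$ is an eigenfunction, then $V_1^{\perp}(y)\ne 0$ (otherwise $T_1 y(0)\in\Ran T_1\cap\Ran T_1^{\perp} = \{0\}$, so $y(0) = 0$ by $V_1(y)=0$ and then $y^{[1]}(0) = 0$, forcing $y\equiv 0$). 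Hence no nonzero vector of $E_0$ is orthogonal in $L_2((0,\pi);\mathbb C^m)$ to the span of the columns of $\Theta$; since that span is contained in $E_0$, it must equal $E_0$, and therefore $\rank\al_0 = \dim E_0 = r$.

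The main obstacle is the bookkeeping in the two boundary-substitution computations — confirming they reduce exactly as claimed, for which the projection identities $H_j = T_j H_j T_j$, $T_j = T_j^2$, $T_j T_j^{\perp} = 0$ must be used carefully — together with correctly invoking the local structure of meromorphic matrix Herglotz functions (or, in the alternative route, the local Smith normal form of $V_2(\vv(\cdot,\la))$) to pass from the symmetry and positivity relations to simplicity of the poles.
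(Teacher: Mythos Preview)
Your argument is correct. The paper does not give its own proof of this lemma but defers to \cite[Lemma~4]{Bond11}; your route via the matrix Herglotz property is a clean, self-contained alternative. The identity $(\bar\la-\mu)\int_0^\pi\Phi(x,\la)^\dagger\Phi(x,\mu)\,dx = M(\mu)-M(\la)^\dagger$ is essentially the Wronskian computation the paper carries out immediately below the lemma (to derive $M(\la)=(M(\bar\la))^\dagger$), so the ingredients coincide; what you add is the appeal to the local pole structure of meromorphic matrix Nevanlinna functions to force simplicity, which is standard and legitimate. Your alternative route through the Smith normal form of $V_2(\vv(\cdot,\la))$, combined with Lemma~\ref{lem:mult}, is closer to how the cited references argue.

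The second half --- identifying $\Theta=\vv(\cdot,\la_0)\al_0$ with a matrix of eigenfunctions, using the injectivity of $c\mapsto\vv(\cdot,\la_0)c$ for $\rank\al_0\le r$, and the pairing $\int_0^\pi y^\dagger\Theta\,dx=(V_1^\perp(y))^\dagger$ for the reverse inequality --- is in the spirit of the proof of relation~\eqref{sym1} in Lemma~\ref{lem:relal} and of \cite{Bond11,Bond19}. One small point worth making explicit in your write-up: in the $x=0$ boundary substitution for $y$ against $\Phi$, the extra term $y(0)^\dagger H_1 M(\la)$ coming from $\vv^{[1]}(0,\la)=T_1^\perp+H_1$ is cancelled exactly by $y^{[1]}(0)^\dagger T_1 M(\la)$ via $T_1 y^{[1]}(0)=H_1 y(0)$, so your reduction to $(V_1^\perp(y))^\dagger$ is valid for general $H_1$ and does not rely on the normalization $H_1=0$ introduced later in the paper.
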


Lemma~\ref{lem:ranks} can be proved similarly to~\cite[Lemma~4]{Bond11}. Denote
$$
\al_{nk} := \Res_{\la = \la_{nk}} M(\la), \quad (n, k) \in J.
$$

The matrices $\{ \al_{nk} \}_{(n, k) \in J}$ are called the {\it weight matrices} and the data $\{ \la_{nk}, \al_{nk} \}_{(n, k) \in J}$ are called the \textit{spectral data} of $L$. 

Without loss of generality, below we assume that $H_1 = 0$. One can achieve this condition, applying the following transform: 
$$
\sigma(x) := \sigma(x) + H_1, \quad H_1 := 0, \quad H_2 := H_2 - T_2 H_1 T_2.
$$
Obviously, this transform does not change the spectral data $\{ \la_{nk}, \al_{nk} \}_{(n, k) \in J}$.

Now proceed to study properties of the weight matrices. Note that, if $Y$ and $Z$ satisfy equation~\eqref{eqv}, then the matrix Wronskian $\langle Y^{\dagger}, Z \rangle = (Y(x, \overline{\la}))^{\dagger} Z^{[1]}(x,\la) - (Y^{[1]}(x, \overline{\la}))^{\dagger} Z(x, \la)$ does not depend on $x$. 
Therefore, we obtain
$$
\langle (\Phi(x, \overline{\la}))^{\dagger}, \Phi(x, \la) \rangle = \langle (\Phi(x, \overline{\la}))^{\dagger}, \Phi(x, \la) \rangle_{|x = 0} = \langle (\Phi(x, \overline{\la}))^{\dagger}, \Phi(x, \la) \rangle_{|x = \pi}.
$$

It follows from~\eqref{relPhi} that
$$
\Phi(0, \la) = -T_1^{\perp} + T_1 M(\la), \quad 
\Phi^{[1]}(0, \la) = T_1 + T_1^{\perp} M(\la).
$$
Consequently, $\langle \Phi^{\dagger}, \Phi \rangle_{|x = 0} = M^{\dagger}(\overline{\la}) - M(\la)$. Since $\Phi(x, \la)$ satisfies \eqref{bc2}, it follows that $\langle \Phi^{\dagger}, \Phi \rangle_{|x = \pi} = 0$. We conclude that $M(\la) \equiv (M(\overline{\la}))^{\dagger}$. Hence, $\al_{nk} = \al_{nk}^{\dagger}$, $(n, k) \in J$.

\begin{lem} \label{lem:relal}
The following relations hold for $(n, k), (l, j) \in J$:
\begin{gather} \label{Val}
    V_2(\vv(x, \la_{nk})) \al_{nk} = 0, \\ \label{sym1}
    \al_{nk} \int_0^{\pi} (\vv(x, \la_{nk}))^{\dagger} \vv(x, \la_{lj}) \, dx \, \al_{lj} = \begin{cases}
                    \al_{nk}, \quad \la_{nk} = \la_{lj}, \\
                    0, \quad \la_{nk} \ne \la_{lj}.
                 \end{cases}
\end{gather}
\end{lem}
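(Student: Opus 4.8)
The plan is to prove~\eqref{Val} first and then to split~\eqref{sym1} into an ``off-diagonal'' part, which follows from orthogonality of eigenfunctions, and a ``diagonal'' normalization, which follows from a Lagrange-type identity for the Weyl solution.

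\emph{Proof of~\eqref{Val}.} Rewrite~\eqref{relM} as $V_2(\vv(x, \la))\, M(\la) = -V_2(\psi(x, \la))$. Its right-hand side is entire in $\la$, since $\psi(x, \la)$ and $\psi^{[1]}(x, \la)$ are entire; on the left, $V_2(\vv(x, \la))$ is entire, while by Lemma~\ref{lem:ranks} the function $M(\la)$ has a simple pole at $\la = \la_{nk}$ with residue $\al_{nk}$. Equating the residues at $\la = \la_{nk}$ gives $V_2(\vv(x, \la_{nk}))\, \al_{nk} = 0$. Moreover, by~\eqref{relPhi} and since $\psi(x, \la)$, $\vv(x, \la)$ are entire, $\Phi(x, \la)$ has a simple pole at $\la_{nk}$ with $\Res_{\la = \la_{nk}} \Phi(x, \la) = \vv(x, \la_{nk})\, \al_{nk}$; combined with $V_1(\vv) = 0$ and~\eqref{Val}, this shows that each column of $\vv(x, \la_{nk})\, \al_{nk}$ lies in $\mathcal D(L)$ and hence is either zero or an eigenfunction corresponding to $\la_{nk}$.

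\emph{Reduction of~\eqref{sym1}.} Since $\al_{nk}^{\dagger} = \al_{nk}$, we have $(\vv(x, \la_{nk})\al_{nk})^{\dagger}\, \vv(x, \la_{lj})\al_{lj} = \al_{nk}\, (\vv(x, \la_{nk}))^{\dagger}\vv(x, \la_{lj})\, \al_{lj}$. If $\la_{nk} \ne \la_{lj}$, the columns of $\vv(x, \la_{nk})\al_{nk}$ and $\vv(x, \la_{lj})\al_{lj}$ are eigenfunctions (or zero) for distinct eigenvalues, so all their pairwise scalar products in $L_2((0, \pi); \mathbb C^m)$ vanish by Lemma~\ref{lem:sym}; written entrywise, this is precisely $\al_{nk} \int_0^{\pi} (\vv(x, \la_{nk}))^{\dagger}\vv(x, \la_{lj})\, dx\, \al_{lj} = 0$. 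If $\la_{nk} = \la_{lj}$, then $\al_{nk} = \al_{lj}$ and $\vv(\cdot, \la_{nk}) = \vv(\cdot, \la_{lj})$, so it remains to prove the normalization $\al_{nk} \int_0^{\pi} (\vv(x, \la_{nk}))^{\dagger}\vv(x, \la_{nk})\, dx\, \al_{nk} = \al_{nk}$.

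\emph{Proof of the normalization.} For $\mu, \la$ outside the spectrum, the same computation that establishes the $x$-independence of the matrix Wronskian above, carried out for two distinct spectral parameters, yields
\[
(\mu - \la) \int_0^{\pi} (\Phi(x, \overline{\mu}))^{\dagger} \Phi(x, \la)\, dx = \big[ (\Phi(x, \overline{\mu}))^{\dagger}\Phi^{[1]}(x, \la) - (\Phi^{[1]}(x, \overline{\mu}))^{\dagger}\Phi(x, \la) \big] \Big|_{x = 0}^{x = \pi}.
\]
The substitution at $x = \pi$ vanishes because $\Phi(\cdot, \overline{\mu})$ and $\Phi(\cdot, \la)$ both satisfy~\eqref{bc2} (exactly as in the proof of Lemma~\ref{lem:sym}); and, using $\Phi(0, \la) = -T_1^{\perp} + T_1 M(\la)$, $\Phi^{[1]}(0, \la) = T_1 + T_1^{\perp} M(\la)$ together with the already established identity $M(\la) \equiv (M(\overline{\la}))^{\dagger}$, the bracket at $x = 0$ equals $(M(\overline{\mu}))^{\dagger} - M(\la) = M(\mu) - M(\la)$. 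Hence $(\mu - \la) \int_0^{\pi} (\Phi(x, \overline{\mu}))^{\dagger} \Phi(x, \la)\, dx = M(\la) - M(\mu)$. Now fix $\la$ off the spectrum and take the residue at $\mu = \la_{nk}$: the map $\mu \mapsto (\Phi(x, \overline{\mu}))^{\dagger}$ is meromorphic with a simple pole at $\mu = \la_{nk}$ whose residue is $(\vv(x, \la_{nk})\al_{nk})^{\dagger} = \al_{nk}(\vv(x, \la_{nk}))^{\dagger}$ (recall $\la_{nk} \in \mathbb R$ and $\al_{nk}^{\dagger} = \al_{nk}$), and, because $(\mu - \la_{nk})(\Phi(x, \overline{\mu}))^{\dagger}$ is bounded uniformly in $x \in [0, \pi]$ near $\mu = \la_{nk}$, this residue may be taken under the integral sign; we obtain $(\la_{nk} - \la)\, \al_{nk} \int_0^{\pi} (\vv(x, \la_{nk}))^{\dagger} \Phi(x, \la)\, dx = -\al_{nk}$. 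Finally, substituting $\Phi(x, \la) = \psi(x, \la) + \vv(x, \la) M(\la)$ from~\eqref{relPhi} and letting $\la \to \la_{nk}$, the summand containing $\psi$ drops out (the factor $(\la_{nk} - \la) \to 0$ against the bounded, entire-in-$\la$ matrix $\int_0^{\pi} (\vv(x, \la_{nk}))^{\dagger}\psi(x, \la)\, dx$), while in the remaining summand $(\la_{nk} - \la) M(\la) \to -\al_{nk}$ and $\int_0^{\pi} (\vv(x, \la_{nk}))^{\dagger}\vv(x, \la)\, dx \to \int_0^{\pi} (\vv(x, \la_{nk}))^{\dagger}\vv(x, \la_{nk})\, dx$, which gives $-\al_{nk}\int_0^{\pi}(\vv(x, \la_{nk}))^{\dagger}\vv(x, \la_{nk})\, dx\,\al_{nk} = -\al_{nk}$, i.e.\ the required normalization. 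The one delicate point in this argument is the residue bookkeeping in the last two identities --- the interchange of $\Res_{\mu = \la_{nk}}$ with the $x$-integral, and keeping track of signs so that the normalization comes out with a plus sign.
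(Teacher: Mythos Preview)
Your proof is correct. The paper itself does not supply a proof of this lemma, only a pointer to the analogous \cite[Lemma~2.2]{Bond20}; your argument --- obtaining~\eqref{Val} by taking residues in $V_2(\vv)\,M(\la)=-V_2(\psi)$, deducing the off-diagonal case of~\eqref{sym1} from orthogonality of eigenfunctions, and proving the diagonal normalization via the Lagrange identity $(\mu-\la)\int_0^\pi(\Phi(x,\overline\mu))^{\dagger}\Phi(x,\la)\,dx=M(\la)-M(\mu)$ followed by two successive residue/limit steps --- is exactly the standard route and matches what that reference does.
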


Lemma~\ref{lem:relal} can be proved analogously to \cite[Lemma~2.2]{Bond20}.

\section{Zero case}

In this section, the problem \eqref{eqv}-\eqref{bc2} is considered in the case $\sigma = 0$ in $L_2((0, \pi); \mathbb C^{m \times m})$, $H_1 = H_2 = 0$. We agree to use the superscript $0$ for objects corresponding to this special case.

One can easily show that
\begin{gather} \label{phi0}
\vv^0(x, \la) = \cos \rho x \, T_1 +  \frac{\sin \rho x}{\rho} T_1^{\perp}, \quad
\psi^0(x, \la) = \frac{\sin \rho x}{\rho} T_1 - \cos \rho x \, T_1^{\perp}, \\ \label{V20}
V_2^0(\vv^0) = - (\rho T_2 + T_2^{\perp}) W^0(\rho) (T_1 + \rho^{-1} T_1^{\perp}), \\ \label{V20psi}
V_2^0(\psi^0) = (\rho T_2 + T_2^{\perp}) U^0(\rho) (\rho^{-1} T_1 + T_1^{\perp}), \\
\label{defU0}
U^0(\rho) := (T_2 T_1 + T_2^{\perp} T_1^{\perp}) \cos \rho \pi + (T_2 T_1^{\perp} - T_2^{\perp} T_1) \sin \rho \pi,
\end{gather}
where $W^0(\rho)$ is defined in \eqref{defW0}. 

Let us find the eigenvalues of $L^0$.
In view of Lemma~\ref{lem:mult} and \eqref{V20}, the square roots of nonzero eigenvalues of the problem $L^0$ coincide with the zeros of the function $w^0(\rho) = \det(W^0(\rho))$. This function can be represented in the form
\begin{equation} \label{poly}
w^0(\rho) = (\sin \rho \pi)^{d_1} (\cos \rho \pi)^{d_2} P_{d_3}(\cos^2 \rho \pi),
\end{equation}
where $d_j$, $j = \overline{1, 3}$, are non-negative integers, $P_{d_3}(x)$ is a polynomial of degree $d_3$, $P(0) \ne 0$, $P(1) \ne 0$, and $d_1 + d_2 + 2 d_3 = m$. Consequently, the function $w^0(\rho)$ is either periodic or antiperiodic with period $1$. Note that the polynomial $P_{d_3}(x)$ has exactly $d_3$ roots on $(0, 1)$. Otherwise the function $w^0(\rho)$ has non-real roots, and this contradicts to the self-adjointness of the problem $L^0$. Therefore, the function $w^0(\rho)$ has exactly $m$ roots on $[0, 1)$. Denote them by $\{ r_k \}_{k = 1}^m$ in non-decreasing order. It follows from~\eqref{poly} that $w^0(\rho) = \pm w^0(1 - \rho)$, so for any $r_k \ne 0$ there exists $r_s = 1 - r_k$. The set of all zeros of $w^0(\rho)$ has the form 
\begin{equation} \label{rho0}
\rho_{nk}^0 = n + r_k, \quad n \in \mathbb Z, \quad k = \overline{1, m}.
\end{equation}
Consequently, the non-zero eigenvalues of $L^0$ have the form
$$
\la_{nk}^0 = (\rho_{nk}^0)^2, \quad n \ge 0, \quad k = \overline{1, m}, \quad \rho_{nk}^0 \ne 0.
$$

Let us separately study the case $\la = 0$.

\begin{lem} \label{lem:zero1}
The multiplicity of the eigenvalue $\la = 0$ of the problem $L^0$ equals $p := \dim (\Ran T_1 \cap \Ran T_2)$.
\end{lem}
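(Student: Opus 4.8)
The plan is to count the dimension of the eigenspace of $L^0$ at $\la = 0$ directly, by writing down the general solution of equation~\eqref{eqv} at $\la = 0$ (with $\sigma = 0$) and imposing the boundary conditions~\eqref{bc1}--\eqref{bc2} (with $H_1 = H_2 = 0$). At $\la = 0$ equation~\eqref{eqv} reduces to $-Y'' = 0$, so $Y(x) = A + B x$ with $A, B \in \mathbb C^m$, and the quasi-derivative is $Y^{[1]}(x) = Y'(x) = B$. The boundary condition~\eqref{bc1} becomes $T_1 B - T_1^{\perp} A = 0$, i.e. $T_1^{\perp} A = 0$ and $T_1 B = 0$ (splitting into the two orthogonal components); likewise \eqref{bc2} at $x = \pi$ gives $T_2(B) - T_2^{\perp}(A + \pi B) = 0$, i.e. $T_2^{\perp}(A + \pi B) = 0$ and $T_2 B = 0$.

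So the first step is to record the four constraints $T_1^{\perp} A = 0$, $T_1 B = 0$, $T_2 B = 0$, $T_2^{\perp}(A + \pi B) = 0$. From $T_1 B = 0$ and $T_2 B = 0$ we get $B \in \Ker T_1 \cap \Ker T_2$, i.e. $B \in (\Ran T_1)^{\perp} \cap (\Ran T_2)^{\perp} = (\Ran T_1 + \Ran T_2)^{\perp}$. The constraint $T_1^{\perp} A = 0$ says $A \in \Ran T_1$. The remaining constraint $T_2^{\perp}(A + \pi B) = 0$ must then be analyzed; I expect the main step to be showing that, for the relevant choices of $A$ and $B$, this last constraint forces $B = 0$, so that the eigenspace is exactly $\{Y(x) \equiv A : A \in \Ran T_1 \cap \Ran T_2\}$, which has dimension $p$.

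To see that $B = 0$: given $B \in (\Ran T_1 + \Ran T_2)^{\perp}$ and $A \in \Ran T_1$, decompose $T_2^{\perp}(A + \pi B) = 0$ as $T_2^{\perp} A = -\pi T_2^{\perp} B = -\pi B$ (the last equality because $B \in \Ker T_2$, so $T_2^{\perp} B = B$). Thus $B = -\tfrac{1}{\pi} T_2^{\perp} A \in \Ran T_1$ as well (since $A \in \Ran T_1$ and... careful: $T_2^\perp A$ need not lie in $\Ran T_1$). Instead, pair the identity $T_2^{\perp} A + \pi B = 0$ with $B$: since $\langle T_2^\perp A, B\rangle = \langle A, T_2^\perp B\rangle = \langle A, B\rangle$ and $\langle A, B\rangle = 0$ because $A \in \Ran T_1 \subseteq \Ran T_1 + \Ran T_2$ while $B \perp \Ran T_1 + \Ran T_2$, we get $\pi\|B\|^2 = \langle T_2^\perp A + \pi B, B\rangle = 0$, hence $B = 0$. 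Then the constraints collapse to $A \in \Ran T_1$ and $T_2^{\perp} A = 0$, i.e. $A \in \Ran T_1 \cap \Ker T_2^{\perp} = \Ran T_1 \cap \Ran T_2$, and the eigenspace is the set of constant functions with value in $\Ran T_1 \cap \Ran T_2$. By Lemma~\ref{lem:mult}, the multiplicity of the eigenvalue $\la = 0$ equals the number of linearly independent vector eigenfunctions, which is $\dim(\Ran T_1 \cap \Ran T_2) = p$. The only subtlety I anticipate is bookkeeping with the orthogonal projectors (using $T_j = T_j^{\dagger} = T_j^2$, $T_j^{\perp} = I - T_j$, and $\Ran T_j^{\perp} = \Ker T_j$); everything else is linear algebra.
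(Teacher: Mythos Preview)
Your proof is correct and follows essentially the same strategy as the paper's. The paper parametrizes the eigenspace at $\la = 0$ by $\vv^0(x,0)\,c = (T_1 + x\,T_1^{\perp})c$ and analyzes the kernel of $V_2^0(\vv^0(x,0))$; your $A + Bx$ with $T_1^{\perp}A = 0$, $T_1 B = 0$ is exactly this under the identification $A = T_1 c$, $B = T_1^{\perp} c$, and your inner-product step $\langle T_2^{\perp}A + \pi B, B\rangle = 0 \Rightarrow B = 0$ is the same orthogonality trick the paper uses to show $c_2 = 0$.
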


\begin{proof}
The eigenfunctions corresponding to the eigenvalue $\la = 0$ of the problem $L^0$ have the form $\vv^0(x, 0) c$, where vectors $c \in \mathbb C^m$ are such that 
\begin{equation} \label{sm2}
    V_2^0(\vv^0(x, 0)) c = 0.
\end{equation} 
Using~\eqref{defW0} and~\eqref{V20}, we get 
$$
V_2^0(\vv(x, 0)) = T_2 T_1^{\perp} - T_2^{\perp} T_1 - \pi T_1^{\perp} T_2^{\perp}.
$$
Clearly, for any $c \in \Ran T_1 \cap \Ran T_2$, relation~\eqref{sm2} holds. Let us show that there are no other such $c$. Suppose that $c = c_1 + c_2 + c_3$, where $c_1$ and $c_2$ belong to the subspaces $\Ran T_1 \cap \Ran T_2$ and $\Ker T_1 \cap \Ker T_2$, respectively, and $c_3$ is orthogonal to both these subspaces. Then
\begin{equation} \label{sm3}
V_2^0(\vv^0(x, 0)) c = (T_2 T_1^{\perp} - T_2^{\perp} T_1) c_3 - \pi c_2.  
\end{equation}
If \eqref{sm2} holds, then
$$
(c_2, c_2) = \tfrac{1}{\pi}((T_2 T_1^{\perp} - T_2^{\perp} T_1) c_3, c_2) =
\tfrac{1}{\pi}(c_3, (T_1^{\perp} T_2 - T_1 T_2^{\perp}) c_2) = 0.
$$
Hence, $c_2 = 0$. It follows from~\eqref{sm2} and~\eqref{sm3} that $c_3 = 0$. Consequently, the number of linearly independent vectors $c$ satisfying~\eqref{sm2} equals $p$. This yields the claim.
\end{proof}

\begin{lem} \label{lem:zero2}
The multiplicity of the zero $\rho = 0$ of the function $w^0(\rho)$ equals $(p + p^{\perp})$, where $p^{\perp} := \dim(\Ker T_1 \cap \Ker T_2)$.
\end{lem}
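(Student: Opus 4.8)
The plan is to read off the order of the zero of $w^0$ at $\rho=0$ from the multiplicity of the eigenvalue $\la=0$ of $L^0$, which is already given by Lemma~\ref{lem:zero1}. Write $q_j:=\rank T_j$, $j=1,2$. In an orthonormal basis adapted to the decomposition $\mathbb C^m=\Ran T_2\oplus\Ker T_2$ the matrix $\rho T_2+T_2^{\perp}$ is diagonal, equal to $\rho$ on $\Ran T_2$ and to $1$ on $\Ker T_2$, so $\det(\rho T_2+T_2^{\perp})=\rho^{q_2}$; similarly $\det(T_1+\rho^{-1}T_1^{\perp})=\rho^{-(m-q_1)}$. Taking determinants in~\eqref{V20} therefore gives
\[
\Delta^0(\la)=\det\big(V_2^0(\vv^0(x,\la))\big)=(-1)^m\,\rho^{\,q_1+q_2-m}\,w^0(\rho).
\]

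Next I would invoke Lemma~\ref{lem:mult} (applied to $L^0$) together with Lemma~\ref{lem:zero1}: the point $\la=0$ is a zero of the entire function $\Delta^0(\la)$ of multiplicity exactly $p$, hence the entire function $\rho\mapsto\Delta^0(\rho^2)$ has a zero of order $2p$ at $\rho=0$. Since $w^0\not\equiv 0$ (see~\eqref{poly}), comparing the leading terms of the power-series (Laurent) expansions at $\rho=0$ of the two sides of the displayed identity yields $2p=(q_1+q_2-m)+\mu$, where $\mu$ is the order of the zero of $w^0$ at $\rho=0$. This bookkeeping remains valid even when $q_1+q_2-m<0$: the factor $\rho^{\,q_1+q_2-m}$ then contributes a pole which must be cancelled by the zero of $w^0$, and the equality of the leading Laurent exponents still holds. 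Thus $\mu=2p-q_1-q_2+m$.

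It remains to express $q_1+q_2-m$ through $p$ and $p^{\perp}$. Since each $T_j$ is an orthogonal projection, $(\Ran T_j)^{\perp}=\Ker T_j$, and consequently
\[
(\Ran T_1+\Ran T_2)^{\perp}=\Ker T_1\cap\Ker T_2,\qquad \dim(\Ran T_1+\Ran T_2)=m-p^{\perp}.
\]
On the other hand, the Grassmann dimension formula gives $\dim(\Ran T_1+\Ran T_2)=q_1+q_2-p$. Equating the two expressions yields $q_1+q_2-m=p-p^{\perp}$, whence $\mu=2p-(p-p^{\perp})=p+p^{\perp}$, which is the assertion.

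The argument is really just careful bookkeeping; the one delicate point is the passage between the $\la$- and $\rho$-variables combined with the possibly negative exponent $q_1+q_2-m$, which is why I would phrase the comparison in terms of orders of Laurent expansions rather than of zeros. One could instead argue directly from the value $W^0(0)=T_2^{\perp}T_1-T_2T_1^{\perp}$ and the canonical decomposition of the pair of projections $T_1,T_2$, but that route is noticeably more computational, whereas the one above reuses Lemma~\ref{lem:zero1} essentially for free.
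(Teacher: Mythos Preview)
Your proof is correct and takes a genuinely different route from the paper's. The paper argues directly on $W^0(0)$: writing $c=c_1+c_2$ with $c_1\in\Ran T_1$, $c_2\in\Ker T_1$, it observes that $W^0(0)c=0$ forces $c_1\in\Ran T_2$ and $c_2\in\Ker T_2$, whence $\Ker W^0(0)=(\Ran T_1\cap\Ran T_2)\oplus(\Ker T_1\cap\Ker T_2)$ has dimension $p+p^{\perp}$; it then invokes Lemma~\ref{lem:mult} to identify this dimension with the order of vanishing of $w^0=\det W^0$ at $\rho=0$. You instead pull the determinant through the factorization~\eqref{V20}, match the order $2p$ of the zero of $\rho\mapsto\Delta^0(\rho^2)$ (supplied by Lemmas~\ref{lem:mult} and~\ref{lem:zero1}) against $q_1+q_2-m+\mu$, and close with the Grassmann identity $q_1+q_2-m=p-p^{\perp}$.

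Your approach has the advantage of being entirely self-contained: it sidesteps the step ``order of vanishing of $\det W^0$ at $0$ equals $\dim\Ker W^0(0)$'', which is not automatic for general analytic matrix families and which the paper justifies only by a pointer to Lemma~\ref{lem:mult}. The paper's approach, on the other hand, yields the explicit description of $\Ker W^0(0)$ as the direct sum of $\Ran T_1\cap\Ran T_2$ and $\Ker T_1\cap\Ker T_2$, information that your determinant bookkeeping does not produce.
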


\begin{proof}
Relying on Lemma~\ref{lem:mult}, one can show that the desired multiplicity equals $\dim(\Ker W^0(0))$. Let $c \in \Ker W^0(0)$. Represent $c$ in the form $c = c_1 + c_2$, $c_1 \in \Ran T_1$, $c_2 \in \Ker T_1$. Using~\eqref{defW0}, we get
$$
W^0(0) c = (T_2 T_1^{\perp} - T_2^{\perp} T_1) (c_1 + c_2) = T_2 c_2 - T_2^{\perp} c_1 = 0.
$$
This implies $T_2 c_2 = T_2^{\perp} c_1 = 0$, i.e., $c_1 \in \Ran T_2$, $c_2 \in \Ker T_2$. Therefore,
$$
    \Ker W^0(0) = (\Ran T_1 \cap \Ran T_2) \cup (\Ker T_1 \cap \Ker T_2).
$$
Thus, $\dim (\Ker W^0(0)) = p + p^{\perp}$.
\end{proof}

Lemmas~\ref{lem:zero1} and~\ref{lem:zero2} together with the arguments above them yield the assertion of Theorem~\ref{thm:asymptla} for $L^0$ with $\varkappa_{nk}^0 = 0$, $(n, k) \in J$.

Using~\eqref{defW0} and \eqref{poly}, we obtain the important estimate:
\begin{equation} \label{W0below}
\| (W^0(\rho))^{-1} \| \le C_{\de} \exp(-|\tau|\pi), \quad \rho \in
G_{\de}, 
\end{equation}
where 
\begin{equation} \label{Gde}
G_{\de} := \{ \rho \in \mathbb C \colon |\rho - \rho_{nk}^0| \ge \de, \, n \in \mathbb Z, \, k = \overline{1, m} \}, \quad \de > 0,
\end{equation}
and the constant $C_{\de}$ depends on $\de$.

Let us proceed to find the weight matrices $\{ \al_{nk}^0 \}$. Substituting~\eqref{V20} and~\eqref{V20psi} into \eqref{relM}, we obtain
\begin{equation} \label{ME0}
M^0(\la) = (T_1 + \rho T_1^{\perp}) E^0(\rho) (\rho^{-1} T_1 + T_1^{\perp}), \quad E^0(\rho) := (W^0(\rho))^{-1} U^0(\rho).
\end{equation}

It follows from~\eqref{defW0} and~\eqref{defU0} that the matrix function $E^0(\rho)$ is $1$-periodic and meromorphic in $\rho$ with the poles at $\rho = \rho_{nk}^0$. For $\rho_{nk}^0 \ne 0$, we have
\begin{gather} \label{weight1}
\al_{nk}^0 = \Res_{\la = \la_{nk}^0} M^0(\la) = 
2 \Res_{\rho = \rho_{nk}^0} (T_1 + \rho T_1^{\perp}) E^0(\rho) (T_1 + \rho T_1^{\perp}) = 
\frac{2}{\pi}(T_1 + \rho_{nk}^0 T_1^{\perp}) A_k (T_1 + \rho_{nk}^0 T_1^{\perp}), \\ \label{defAk}
A_k := \pi \Res_{\rho = r_k} E^0(\rho), \quad k = \overline{1, m}.
\end{gather}
On the other hand,
\begin{equation} \label{weight2}
\al_{nk}^0 = 2 \Res_{\rho = -\rho_{nk}^0} (T_1 + \rho T_1^{\perp}) E^0(\rho) (T_1 + \rho T_1^{\perp}) = \frac{2}{\pi} (T_1 - \rho_{nk}^0 T_1^{\perp}) A_s (T_1 - \rho_{nk}^0 T_1^{\perp}),
\end{equation}
where $r_k + r_s = 1$ or $r_k = r_s = 0$, $\rho_{nk}^0 \ne 0$.
Comparing~\eqref{weight1} and~\eqref{weight2}, we get
\begin{equation} \label{Aks}
    A_k = (T_1 - T_1^{\perp}) A_s (T_1 - T_1^{\perp}), \quad r_k + r_s = 1 \:\: \text{or} \:\: r_k = r_s = 0.
\end{equation}
The case $\rho_{nk}^0 = 0$ is slightly different:
\begin{equation} \label{weight0}
\al_{nk}^0 = \tfrac{1}{\pi} T_1 A_k T_1, \quad \rho_{nk}^0 = 0.
\end{equation}

By Lemma~\ref{lem:ranks}, we have
\begin{equation} \label{rankAk}
\rank (A_k) = \# \{ s = \overline{1, r} \colon r_s = r_k \}.
\end{equation}
Lemmas~\ref{lem:ranks}, \ref{lem:zero1}, \ref{lem:zero2} together with relations~\eqref{Aks}, \eqref{weight0} imply
\begin{equation} \label{A1}
    A_1 = T_1 A_1 T_1 + T_1^{\perp} A_1 T_1^{\perp}, \quad 
    \rank(T_1 A_1 T_1) = p, \quad \rank (T_1^{\perp} A_1 T_1^{\perp}) = p^{\perp}, \quad \text{if} \:\: r_1 = 0.
\end{equation}
Using \eqref{sym1} for $\al_{nk}^0$, \eqref{phi0}, \eqref{rho0}, \eqref{weight1},
and the relation $\al_{nk}^0 = (\al_{nk}^0)^{\dagger}$, we obtain
\begin{equation} \label{propAk}
A_k = A_k^{\dagger} = A_k^2, \quad A_k A_s = 0, \quad r_k \ne r_s,
\quad k,s = \overline{1,m}.
\end{equation}
Hence, $\{ A_k \}_{k \in \mathcal J}$ are matrices of mutually orthogonal projectors, $\mathcal J := \{1 \} \cup \{ k = \overline{2, m} \colon r_{k-1} \ne r_k \}$. In view of~\eqref{rankAk}, we have
\begin{equation} \label{sumAk}
    \sum_{k \in \mathcal J} A_k = I.
\end{equation}
Thus, we have explicitly described the weight matrices of the problem $L^0$.

\section{Asymptotics}

The goal of this section is to derive asymptotic formulas for the eigenvalues and for the weight matrices of the problem~$L$. We start with asymptotic formulas for solutions of equation~\eqref{eqv}.

Let $S(x, \la)$ and $C(x, \la)$ be the matrix solutions of equation~\eqref{eqv} satisfying the initial conditions
$$
S(0, \la) = C^{[1]}(0, \la) = 0, \quad S^{[1]}(0, \la) = C(0, \la) = I.
$$

The following theorem represents $S(x, \la)$, $C(x, \la)$ and their quasi-derivatives in terms of transformation operators. Such operators were introduced by Marchenko \cite{Mar77} for the classical case of regular potentials and play an important role in spectral theory.

\begin{thm} \label{thm:trans}
The following relations hold
\begin{align*}
    S(x, \la) &= \frac{\sin \rho x}{\rho} + \int_0^x \mathscr K_1(x, t) \frac{\sin \rho t}{\rho} \, dt, \\
    S^{[1]}(x, \la) & = \cos \rho x + \int_0^x \mathscr K_2(x, t) \cos \rho t \, dt, \\
    C(x, \la) & = \cos \rho x + \int_0^x \mathscr K_3(x, t) \cos \rho t \, dt, \\
    C^{[1]}(x, \la) & = - \rho \sin \rho x + \rho \int_0^x \mathscr K_4(x, t) \sin \rho t \, dt + \mathscr C(x),
\end{align*}
where the matrix functions $\mathscr K_j$, $j = \overline{1, 4}$, are square integrable in the region $\{ (x, t) \colon 0 < t < x < \pi \}$ and the matrix function $\mathscr C$ is continuous on $[0, \pi]$. Moreover, for each fixed $x \in (0, \pi]$, the functions $\mathscr K_j(x, .)$, $j = \overline{1, 4}$, belong to $L_2((0, x); \mathbb C^{m \times m})$ and the corresponding $L_2$-norms $\| \mathscr K_j(x, .) \|_{L_2((0, x); \mathbb C^{m \times m})}$ are uniformly bounded with respect to $x \in (0, \pi]$. Analogously, for each fixed $t \in [0, \pi)$, the functions $\mathscr K_j(., t)$, $j = \overline{1, 4}$, belong to $L_2((t, \pi); \mathbb C^{m \times m})$ and the corresponding $L_2$-norms are uniformly bounded with respect to $t \in [0, \pi)$.
\end{thm}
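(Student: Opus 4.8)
The plan is to construct the transformation operators by the standard method of successive approximations, adapted to the singular ($W_2^{-1}$) setting via the quasi-derivative formalism. First I would pass to the first-order system form of equation \eqref{eqv}: writing $\mathbf{Y} = \begin{pmatrix} Y \\ Y^{[1]} \end{pmatrix}$, equation \eqref{eqv} becomes $\mathbf{Y}' = (B_0 + B(x))\mathbf{Y} + \la B_1 \mathbf{Y}$ with constant matrices $B_0, B_1$ and $B(x)$ built from $\sigma(x)$ and $\sigma^2(x)$, hence $B \in L_2$. The free solutions ($\sigma = 0$) are exactly $S^0 = \frac{\sin\rho x}{\rho}$, $C^0 = \cos\rho x$ and their quasi-derivatives, which already have the desired integral-operator form trivially (with zero kernels). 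Then I would set up the Volterra integral equation for $\mathbf{S}(x,\la)$ (and separately $\mathbf{C}(x,\la)$) obtained by variation of parameters against the free system, and seek the solution in the ansatz given in the statement, i.e. $S(x,\la) = \frac{\sin\rho x}{\rho} + \int_0^x \mathscr{K}_1(x,t)\frac{\sin\rho t}{\rho}\,dt$, etc. Substituting this ansatz into the Volterra equation and using the addition formulas for $\sin$ and $\cos$ to collapse products of trigonometric functions (the classical trick: $\cos\rho x\cos\rho t = \tfrac12(\cos\rho(x-t) + \cos\rho(x+t))$ and similar) converts the integral equation for $S$ into a system of linear integral equations for the kernels $\mathscr{K}_j$ that no longer contains the spectral parameter $\rho$.

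Next I would solve that kernel system by successive approximations. The iteration has the schematic shape $\mathscr{K}^{(n+1)} = \mathscr{K}^{(0)} + \mathcal{L}\mathscr{K}^{(n)}$, where $\mathscr{K}^{(0)}$ is an explicit $L_2$ matrix function built from a single integration of $\sigma$ (this is where the continuous remainder term $\mathscr{C}(x)$ in the $C^{[1]}$-formula comes from: it collects the non-oscillatory contribution of $\sigma(x)$ that cannot be absorbed into a $\sin\rho t$ kernel), and $\mathcal{L}$ is a Volterra-type integral operator whose kernel involves $\sigma$. The key quantitative point is that $\mathcal{L}$ maps the Banach space
$$
X := \{ \mathscr{K} : \sup_{x\in(0,\pi]} \|\mathscr{K}(x,\cdot)\|_{L_2(0,x)} < \infty,\ \sup_{t\in[0,\pi)} \|\mathscr{K}(\cdot,t)\|_{L_2(t,\pi)} < \infty \}
$$
into itself, and that its $n$-th iterate has norm $O(C^n/n!)$ because of the triangular (Volterra) structure of the $t$-integration combined with Cauchy–Schwarz applied to $\sigma \in L_2$. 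Hence the series $\sum_n (\mathscr{K}^{(n+1)} - \mathscr{K}^{(n)})$ converges in $X$, giving $\mathscr{K}_j \in X$; the uniform bounds on the two families of $L_2$-norms are then read off directly from membership in $X$. Square-integrability of $\mathscr{K}_j$ on the triangle $\{0<t<x<\pi\}$ follows from the $X$-bounds by integrating the slice-wise $L_2$-norms.

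The main obstacle — and the part deserving the most care — is bookkeeping the low-regularity terms: because $Q = \sigma'$ only makes sense distributionally, one must never differentiate $\sigma$, and instead work throughout with the quasi-derivative, integrating by parts to move derivatives off $\sigma$ before any estimate. Concretely, the term $-(Y^{[1]})' - \sigma Y^{[1]} - \sigma^2 Y$ must be handled so that, after variation of parameters, every occurrence of $\sigma$ sits inside an integral against an $L_2$ or continuous factor; this is what forces the appearance of the extra continuous term $\mathscr{C}(x)$ and what distinguishes the four kernels $\mathscr{K}_1,\dots,\mathscr{K}_4$ (they satisfy a coupled system rather than four independent equations, since the quasi-derivative of the $S$-ansatz feeds $\sigma$-terms into the $S^{[1]}$-ansatz and vice versa). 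Once the coupled kernel system is written down correctly, its resolution is routine. A secondary, purely technical, point is justifying that the matrix-valued successive approximations converge in the matrix operator norm $\|\cdot\| = s_{\max}(\cdot)$ used in the paper; this is immediate from submultiplicativity. One then verifies a posteriori, by substituting the constructed formulas back into \eqref{eqv} and checking the initial data at $x=0$, that the resulting functions indeed coincide with $S, C$ and their quasi-derivatives, which completes the proof.
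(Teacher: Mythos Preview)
Your proposal is correct and follows essentially the same approach as the paper, which states only that the theorem ``is proved by reduction of the initial value problems for equation~\eqref{eqv} to systems of Volterra integral equations and then applying the method of successive approximations'' and defers the details to \cite{Bond-preprint}. Your outline --- variation of parameters against the free solutions, trigonometric addition formulas to eliminate $\rho$ from the kernel equations, and Volterra-type successive approximations in a space with uniform slice-wise $L_2$ bounds --- is precisely the scheme the cited reference carries out, and your identification of the extra continuous term $\mathscr{C}(x)$ and the coupled nature of the $\mathscr{K}_j$-system as the places where the $W_2^{-1}$ setting departs from the classical case is accurate.
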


Theorem~\ref{thm:trans} is proved by reduction of the initial value problems for equation~\eqref{eqv} to systems of Volterra integral equations and then applying the method of successive approximations (see \cite{Bond-preprint}).

Since 
\begin{equation} \label{phiCS}
\vv(x, \la) = C(x, \la) T_1 + S(x, \la) T_1^{\perp},
\end{equation}
it follows that
\begin{multline*}
V_2(\vv(x, \la)) = T_2 (C^{[1]}(\pi, \la) - H_2 C(\pi, \la)) T_1 + T_2 (S^{[1]}(\pi, \la) - H_2 S(\pi, \la)) T_1^{\perp} \\ - T_2^{\perp} C(\pi, \la) T_1 - T_2^{\perp} S(\pi, \la) T_1^{\perp}.
\end{multline*}
Using Theorem~\ref{thm:trans}, we get
\begin{equation} \label{V2}
V_2(\vv) = - (\rho T_2 + T_2^{\perp}) W(\rho) (T_1 + \rho^{-1} T_1^{\perp}), \quad W(\rho) = W^0(\rho) + K(\rho).
\end{equation}
Here and below, the same notation $K(\rho)$ is used for various matrix functions of the form
\begin{equation} \label{defK}
K(\rho) = \int_{-\pi}^{\pi} \mathscr P(t) \exp(i \rho t) \, dt + \frac{\mathscr Q}{\rho}, \quad \mathscr P \in L_2((-\pi, \pi); \mathbb C^{m \times m}), \quad \mathscr Q \in \mathbb C^{m \times m}.
\end{equation}

In order to prove Theorem~\ref{thm:asymptla}, we need the following matrix version of Rouche's Theorem (see \cite[Lemma~2.2]{CK09}).

\begin{prop} \label{prop:Rouche}
Let $F(\rho)$ and $G(\rho)$ be matrix functions analytic in the disk $|\rho - a| \le r$ and satisfying the condition $\| G(\rho) F^{-1}(\rho) \| < 1$ on the boundary $|\rho - a| = r$. Then the scalar functions $\det(F)$ and $\det(F + G)$ have the same number of zeros inside the circle $|\rho - a| < r$.
\end{prop}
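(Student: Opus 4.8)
The plan is to run the standard homotopy argument behind Rouch\'e's theorem, but carried out at the level of determinants. First I would record that the hypothesis $\|G(\rho)F^{-1}(\rho)\| < 1$ on $|\rho - a| = r$ already presupposes that $F(\rho)$ is invertible, hence $\det F(\rho) \neq 0$, on that circle. Then, for each parameter value $t \in [0,1]$, I would factor on the boundary
\[
F(\rho) + t G(\rho) = \bigl(I + t\,G(\rho)F^{-1}(\rho)\bigr)\, F(\rho),
\]
and since $\|t\,G(\rho)F^{-1}(\rho)\| \le \|G(\rho)F^{-1}(\rho)\| < 1$, the factor $I + t\,G(\rho)F^{-1}(\rho)$ is invertible for every $t \in [0,1]$; consequently $\det(F(\rho) + tG(\rho)) = \det\bigl(I + t\,G(\rho)F^{-1}(\rho)\bigr)\det F(\rho) \neq 0$ for all $|\rho - a| = r$ and all $t \in [0,1]$.

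Next I would apply the scalar argument principle for each fixed $t$. The function $\rho \mapsto \det(F(\rho) + tG(\rho))$ is analytic on the disk $|\rho - a| \le r$ and, by the previous step, does not vanish on its boundary, so the number $N(t)$ of its zeros inside $|\rho - a| < r$ (counted with multiplicities) equals $\frac{1}{2\pi i}\oint_{|\rho - a| = r}\frac{\partial_\rho \det(F(\rho) + tG(\rho))}{\det(F(\rho) + tG(\rho))}\,d\rho$. I would then check that $N(t)$ depends continuously on $t$: the numerator is jointly continuous in $(t,\rho)$, and the denominator, being continuous and nonvanishing on the compact set $[0,1]\times\{|\rho - a| = r\}$, is bounded away from zero there, so the integral varies continuously with $t$. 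Being integer-valued on the connected interval $[0,1]$, $N(t)$ must be constant, and $N(0) = N(1)$ is precisely the asserted equality of zero counts for $\det F$ and $\det(F+G)$.

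There is no serious obstacle here; the argument is routine, and the only points needing a little care are bookkeeping ones. First, the hypothesis is phrased with $\|GF^{-1}\|$ rather than $\|F^{-1}G\|$, so one must factor $F + tG$ on the right, as $(I + tGF^{-1})F$, not on the left. Second, $F^{-1}$ is in general not analytic throughout the disk — it has poles wherever $\det F$ vanishes — so one cannot feed $\det(I + F^{-1}G)$ into the argument principle directly; keeping the whole homotopy at the level of $\det(F + tG)$, whose analyticity is never in doubt, sidesteps this. (One could alternatively avoid the $t$-homotopy by noting that $\det(F+G)/\det F = \det(I + GF^{-1})$ is meromorphic in the disk, so that $N(\det(F+G)) - N(\det F)$ equals the winding number of $\rho \mapsto \det(I + G(\rho)F^{-1}(\rho))$ around $0$ along the boundary circle, and this winding number is $0$ because the boundary map is homotopic, through $t \mapsto \det(I + tGF^{-1})$, to the constant $1$; but the continuity-of-$N(t)$ formulation is cleaner to write.)
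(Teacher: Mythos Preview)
Your argument is correct and is the standard proof of the matrix Rouch\'e theorem. Note, however, that the paper does not supply its own proof of this proposition: it is quoted as \cite[Lemma~2.2]{CK09} and used as a tool, so there is no in-paper argument to compare against. Your write-up would serve perfectly well as a self-contained justification; the only minor polish would be to remark that the joint continuity of the integrand in $(t,\rho)$ follows because $F$, $G$, and $\partial_\rho F$, $\partial_\rho G$ are all continuous on the closed disk (analyticity gives this), so that the numerator $\partial_\rho\det(F+tG)$ is a polynomial in the entries of $F+tG$ and their $\rho$-derivatives.
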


\begin{proof}[Proof of Theorem~\ref{thm:asymptla}]
{\bf Step 1.} Consider the functions $F(\rho) = \rho W^0(\rho)$ and $G(\rho) = \rho(W^0(\rho) - W(\rho))$ entire in the $\rho$-plane. Using~\eqref{V2} and~\eqref{defK}, we obtain
\begin{equation} \label{difW}
W^0(\rho) - W(\rho) = o(\exp(|\tau|\pi)), \quad |\rho| \to \iy.
\end{equation}
The estimates~\eqref{W0below} and \eqref{difW} yield $\| G(\rho) F^{-1}(\rho) \| < 1$ for sufficiently large $|\rho|$, sufficiently small $\de$, and $\rho \in G_{\de}$ ($G_{\de}$ is defined in \eqref{Gde}). Applying Proposition~\ref{prop:Rouche} to the contours $\{ |\rho| = R \} \subset G_{\de}$ and $\{ |\rho - \rho_{nk}^0| = \de \}$ with sufficiently large $R > 0$ and $|n|$ and sufficiently small $\de > 0$, we conclude that the functions $\rho^m \det(W^0(\rho))$ and $\rho^m \det(W(\rho))$ have the same number of zeros inside these contours. Hence, the function $\rho^m \det(W(\rho))$ has a countable set of zeros $\{ \theta_k \}_{k = 1}^m \cup \{ \rho_{nk} \}_{n \in \mathbb Z, \, k = \overline{1, m}}$. Since $\de > 0$ can be arbitrarily small, it follows that 
$$
\rho_{nk} = \rho_{nk}^0 + \varkappa_{nk}, \quad \varkappa_{nk} = o(1), \quad  n \to \pm \iy, \quad k = \overline{1, m}.
$$

{\bf Step 2.} Let us prove that $\{ \varkappa_{nk} \} \in l_2$. Fix a $k \in \{ 1,  \dots, m \}$. Using~\eqref{defW0}, \eqref{V2}, and the Taylor formula, we get
\begin{equation} \label{sm4}
W(\rho_{nk}) = (-1)^n W^0(r_k) + (-1)^n \varkappa_{nk}\dot W^0(r_k) + O(\varkappa_{nk}^2) + K(\rho_{nk}), 
\end{equation}
where $\dot W^0(\rho) = \tfrac{d}{d\rho} W^0(\rho)$. 
Using~\eqref{defK}, we obtain
\begin{align*}
K(\rho_{nk}) & = \int_{-\pi}^{\pi} \mathscr P(t) \exp(i r_k t) \exp(i n t) \, dt \\ & + i \varkappa_{nk} \int_{-\pi}^{\pi} t \mathscr P(t) \exp(i r_k t) \exp(i n t) \, dt + O(\varkappa_{nk}^2) + O\left(n^{-1}\right).
\end{align*}
Thus,
\begin{equation} \label{sm5}
K(\rho_{nk}) = O(\de_{nk}) + O(\varkappa_{nk}^2),
\end{equation}
where $\{ \de_{nk} \} \in l_2$ is some sequence of positive numbers.
Since $\det (W(\rho_{nk})) = 0$, there exists a normalized vector $y_{nk} \in \mathbb C^m$ such that 
\begin{equation} \label{sm6}
    W(\rho_{nk}) y_{nk} = 0.
\end{equation} 
Lemma~\ref{lem:ranks} implies that $(W^0(\rho))^{-1}$ has a simple pole at $\rho = r_k$. Consequently, there exist the matrices $R_{-1}$ and $R_0$ such that
$$
(W^0(\rho))^{-1} = (W^0(r_k) + (\rho - r_k) \dot W^0(r_k) + \dots)^{-1} = \frac{R_{-1}}{\rho - r_k} + R_0 + \dots.
$$
In particular, 
\begin{equation} \label{sm7}
R_{-1} W^0(r_k) = 0, \quad R_{-1} \dot W^0(r_k) + R_0 W^0(r_k) = I.
\end{equation}
Combining~\eqref{sm4}, \eqref{sm5}, and~\eqref{sm6}, we obtain
$$
(\varkappa_{nk}^{-1} R_{-1} + R_0) (W^0(r_k) + \varkappa_{nk} \dot W^0(r_k) + O(\varkappa_{nk}^2) + O(\de_{nk})) y_{nk} = 0.
$$
Using~\eqref{sm7}, we derive
$$
\varkappa_{nk} (y_{nk} + o(1)) = O(\de_{nk}), \quad n \to \pm \iy.
$$
Since $\| y_{nk} \| = 1$, we get $\varkappa_{nk} = O(\de_{nk})$, so $\{ \varkappa_{nk} \} \in l_2$. The above arguments are only valid for $\varkappa_{nk} \ne 0$. The case $\varkappa_{nk} = 0$ is trivial.

\smallskip

{\bf Step 3.}
Using~\eqref{V20}, \eqref{V2}, and the results of steps 1-2, we conclude that the functions $\Delta(\la) = \det(V(\vv))$ and $\Delta^0(\la) = \det(V^0(\vv))$ have the same number of zeros in any sufficiently large disk $\{ |\la| = R \}$ such that $\sqrt R \ne n + r_k$, $n \in \mathbb N$, $k = \overline{1, m}$, and for the zeros $\{ \la_{nk} \}_{(n, k) \in J}$ of $\Delta(\la)$ asymptotics~\eqref{asymptla} holds. Taking Lemma~\ref{lem:mult} into account, we arrive at the claim of the theorem.
\end{proof}

Let us obtain asymptotics of the weight matrices $\{ \al_{nk} \}$. Using~\eqref{relM} and Theorem~\ref{thm:trans}, we get
\begin{gather} \label{ME}
M(\la) = (T_1 + \rho T_1^{\perp}) E(\rho) (\rho^{-1} T_1 + T_1^{\perp}), \\ \nonumber
E(\rho) := (W(\rho))^{-1} U(\rho), \quad U(\rho) = U^0(\rho) + K(\rho),
\end{gather}
where $K(\rho)$ has the form~\eqref{defK}.

Further, we need additional notations. Let $\la_{n_1 k_1} = \la_{n_2 k_2} = \dots = \la_{n_r k_r}$ be a group of multiple eigenvalues maximal by inclusion, $(n_1, k_1) < (n_2, k_2) < \dots < (n_r, k_r)$. Clearly, $\al_{n_1 k_1} = \al_{n_2 k_2} = \dots = \al_{n_r k_r}$. Define $\al'_{n_1 k_1} := \al_{n_1 k_1}$, $\al_{n_j k_j} := 0$, $j = \overline{2, r}$. We obtain the sequences of matrices $\{ \al'_{nk} \}_{(n, k) \in J}$. Below the notation $\{ K_{nk} \}$ is used for various matrix sequences such that $\{ \| K_{nk}\| \} \in l_2$.

\begin{thm} \label{thm:asymptal}
The weight matrices are Hermitian non-negative definite: $\al_{nk} = \al_{nk}^{\dagger} \ge 0$, $(n, k) \in J$. For each $(n, k) \in J$, $\rank(\al_{nk})$ equals the multiplicity of the eigenvalue $\la_{nk}$. Furthermore, the asymptotic formula holds:
\begin{equation} \label{asymptal}
\al_n^{(k)} := \sum_{\substack{s = \overline{1, m} \\ r_s = r_k}} \al'_{ns} = \frac{2}{\pi} (T_1 + n T_1^{\perp}) (A_k + K_{nk}) (T_1 + n T_1^{\perp}), \quad n \ge 1, \quad k = \overline{1, m},
\end{equation}
where $A_k$ is defined in \eqref{defAk}. 
\end{thm}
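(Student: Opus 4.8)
The plan is to establish the three assertions separately, reducing each to the structural facts of Section~2 and the asymptotic machinery already set up in Section~4. For the Hermitian property, we have already derived $M(\la) \equiv (M(\overline{\la}))^{\dagger}$ before Lemma~\ref{lem:relal}, which gives $\al_{nk} = \al_{nk}^{\dagger}$ immediately. Non-negativity and the rank statement I would obtain from relation~\eqref{sym1} of Lemma~\ref{lem:relal}: fixing a group of multiple eigenvalues $\la_{n_1k_1} = \dots = \la_{n_rk_r}$ and setting $B := \int_0^{\pi} (\vv(x,\la_{n_1k_1}))^{\dagger}\vv(x,\la_{n_1k_1})\,dx \ge 0$, equation~\eqref{sym1} says $\al_{n_1k_1} B\,\al_{n_1k_1} = \al_{n_1k_1}$, so $\al_{n_1k_1}$ acts as a generalized inverse of a restriction of $B$; combined with the self-adjointness this forces $\al_{n_1k_1} \ge 0$, and the rank equals $\rank(\al_{n_1k_1}B) = \dim$ of the span of the columns of $\vv(\cdot,\la_{n_1k_1})$ hit by $\al_{n_1k_1}$, which is exactly the number of linearly independent eigenfunctions, hence the multiplicity of $\la_{n_1k_1}$ by Lemma~\ref{lem:mult} and Lemma~\ref{lem:ranks}. (Alternatively the rank claim is just Lemma~\ref{lem:ranks} restated via $\al_{nk} = \Res M$, and Lemma~\ref{lem:ranks} handles the residue rank directly.)

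The substantive part is the asymptotic formula~\eqref{asymptal}. Here I would work with $M(\la)$ in the form~\eqref{ME}, so that $M(\la) = (T_1 + \rho T_1^{\perp}) E(\rho)(\rho^{-1}T_1 + T_1^{\perp})$ with $E(\rho) = (W(\rho))^{-1}U(\rho)$, $W = W^0 + K$, $U = U^0 + K$. Summing the residues of $M(\la)$ over the group $\{(n,s): r_s = r_k\}$ — which is exactly what $\al_n^{(k)} = \sum \al'_{ns}$ collects — and changing the variable from $\la$ to $\rho$ (so $\Res_{\la}$ produces the factor $\tfrac{1}{2\rho}$, hence the $\tfrac{2}{\pi}$ after noting $\rho_{nk} = n + r_k + \varkappa_{nk}$ and that $A_k = \pi\Res_{\rho = r_k}E^0(\rho)$), the task becomes: show that the sum of residues of $E(\rho)$ over the cluster of perturbed poles near $n + r_k$ equals $\tfrac{1}{\pi}(A_k + K_{nk})$ with $\{\|K_{nk}\|\}\in \ell_2$. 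The first step toward this is to control $(W(\rho))^{-1}$ on a fixed-radius circle $|\rho - n - r_k| = \de$ (with $\de$ small enough to separate the distinct values $r_k$): from~\eqref{W0below}, $\|(W^0(\rho))^{-1}\| \le C_{\de}$ there after the $e^{-|\tau|\pi}$ factor is absorbed since $\tau$ is bounded on such circles, and from $W = W^0 + K$ with $K(\rho) = O(\de_{n})$, $\{\de_n\}\in\ell_2$ (the estimate~\eqref{sm5}-type bound), a Neumann-series expansion gives $(W(\rho))^{-1} = (W^0(\rho))^{-1} + (W^0(\rho))^{-1}\,\widetilde K_n(\rho)\,(W^0(\rho))^{-1}$ plus higher order, with $\sup_{|\rho - n - r_k| = \de}\|\widetilde K_n(\rho)\| =: \de_n' \in \ell_2$. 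Multiplying by $U(\rho) = U^0(\rho) + K(\rho)$ and integrating $\tfrac{1}{2\pi i}\oint E(\rho)\,d\rho$ over that circle, the leading term reproduces $\tfrac{1}{2\pi i}\oint E^0(\rho - n)\,d\rho$ by $1$-periodicity of $W^0, U^0$, i.e. $\Res_{\rho = r_k}E^0 = \tfrac{1}{\pi}A_k$, while all the correction terms have operator norm bounded by an $\ell_2$-sequence — giving $K_{nk}$.

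The main obstacle is bookkeeping the $\ell_2$-summability cleanly when the $K$-terms of~\eqref{defK} are integrated against the resolvent: one must verify that $\sup_{|\rho - n - r_k| = \de}\|K(\rho)\|$ is $\ell_2$ in $n$, which follows from the Riemann–Lebesgue-type decay of $\int_{-\pi}^{\pi}\mathscr P(t)e^{i\rho t}\,dt$ (square-summability of Fourier-type coefficients of an $L_2$ kernel along the shifted circles) plus the harmless $\mathscr Q/\rho = O(1/n)$ term, and that products and the Neumann tail preserve $\ell_2$. A secondary technical point is the conjugation by $(T_1 + \rho T_1^{\perp})$ versus $(T_1 + nT_1^{\perp})$: the difference is $\varkappa_{nk}T_1^{\perp} = O(\de_{nk})$ with $\{\de_{nk}\}\in\ell_2$ by Theorem~\ref{thm:asymptla}, so replacing $\rho_{nk}$ by $n$ in the outer factors again only changes $A_k$ by an $\ell_2$-controlled term, which is absorbed into $K_{nk}$; one should also check that $(\rho^{-1}T_1 + T_1^{\perp})$ on the right contributes the matching factor so that the two-sided conjugation in~\eqref{asymptal} comes out symmetric, using $\rho^{-1} = n^{-1}(1 + O(1/n))$ on $\Ran T_1$ and the fact that residues of the $1$-periodic $E^0$ are conjugated by $(T_1 + r_k T_1^{\perp})$-type factors exactly as in~\eqref{weight1}. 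Finally, the rank statement for each individual $\al_{nk}$ (as opposed to the clustered sums) is read off from Lemma~\ref{lem:ranks} directly, so no extra work is needed there.
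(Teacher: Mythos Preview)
Your proposal is correct and follows essentially the same route as the paper: write $\al_n^{(k)}$ as a contour integral of $M(\la)$ over a small circle $|\rho - \rho_{nk}^0| = \de$, split $E(\rho) = E^0(\rho) + (E(\rho) - E^0(\rho))$, use the $1$-periodicity of $E^0$ together with~\eqref{defAk} for the leading term, and bound the remainder via $\|E - E^0\| \le C\|K(\rho)\|e^{-|\tau|\pi}$ on $G_\de$ plus the $\ell_2$-summability of $\sup_{|z|=\de}\|K(n+r_k+z)\|$ in $n$. One small slip: the change of variable $\la = \rho^2$ gives $d\la = 2\rho\,d\rho$, so the extra factor is $2\rho$ (as in the paper's display~\eqref{sm8}), not $1/(2\rho)$; and the paper absorbs the passage from $(T_1 + \rho T_1^\perp)$ to $(T_1 + n T_1^\perp)$ directly into the $E^0$-integral as an $O(n^{-1})$ correction rather than via the $\varkappa_{nk}$ estimate, but this is only a packaging difference.
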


\begin{proof}
It remains to prove \eqref{asymptal}, since all the other properties have been proved in Section~2. In particular, the non-negative definiteness of $\al_{nk}$ follows from~\eqref{sym1}.

Fix a $k \in \{ 1, \dots, m \}$ and choose a sufficiently small $\de > 0$. For sufficiently large $n$, the Residue Theorem and~\eqref{ME} imply
\begin{equation} \label{sm8}
\al_n^{(k)} = \frac{1}{2\pi i} \oint\limits_{|\sqrt{\la} - \rho_{nk}^0| = \de} M(\la) \, d\la = \frac{1}{2 \pi i} \oint\limits_{|\rho - \rho_{nk}^0| = \de} 2 \rho (T_1 + \rho T_1^{\perp}) E(\rho) (\rho^{-1} T_1 + T_1^{\perp})\, d\rho.     
\end{equation}
Using~\eqref{defAk}, we obtain
\begin{equation} \label{sm9}
\frac{1}{2 \pi i} \oint\limits_{|\rho - \rho_{nk}^0| = \de} 2 \rho (T_1 + \rho T_1^{\perp}) E^0(\rho) (\rho^{-1} T_1 + T_1^{\perp})\, d\rho = \frac{2}{\pi} (T_1 + n T_1^{\perp}) (A_k + O(n^{-1})) (T_1 + n T_1^{\perp}).
\end{equation}
Let us estimate the difference
$$
E(\rho) - E^0(\rho) = ((W(\rho))^{-1} - (W^0(\rho))^{-1}) U(\rho) + (W^0(\rho))^{-1} (U^0(\rho) - U(\rho)).
$$
Note that
$$
(W(\rho))^{-1} - (W^0(\rho))^{-1} = (W^0(\rho))^{-1} (I + (W^0(\rho))^{-1} K(\rho))^{-1} - I).
$$
Taking~\eqref{W0below} into account, we get the estimate
$$
\| E(\rho) - E^0(\rho) \| \le C \| K(\rho) \| \exp(-|\tau|\pi), \quad \rho \in G_{\de}.
$$
Represent $\rho \colon |\rho - \rho_{nk}^0| = \de$ in the form $\rho = n + r_k + z$, $|z| = \de$. In view of~\eqref{defK}, the sum 
$$
\sum_{n = 1}^{\iy} \| K(n + r_k + z) \|^2
$$
is bounded uniformly on $|z| = \de$. Consequently, we obtain
\begin{equation} \label{sm10}
\frac{1}{2 \pi i} \oint\limits_{|\rho - \rho_{nk}^0| = \de} 2 \rho (T_1 + \rho T_1^{\perp}) (E(\rho) - E^0(\rho)) (\rho^{-1} T_1 + T_1^{\perp})\, d\rho = (T_1 + n T_1^{\perp}) K_{nk} (T_1 + n T_1^{\perp}), 
\end{equation}
where $\{ \| K_{nk} \| \} \in l_2$. Combining \eqref{sm8}, \eqref{sm9}, and~\eqref{sm10}, we arrive at~\eqref{asymptal}.
\end{proof}

In addition, we obtain estimates for $\vv(x, \la)$ and $\Phi(x, \la)$.

\begin{lem} \label{lem:estsol}
The following relations hold
\begin{gather} \label{estphi}
    \left.
   \begin{array}{r}
    \vv^{[\nu]}(x, \la) = O(\rho^{\nu - 1}\exp(|\tau|x) (\rho T_1 + T_1) \\
    \vv^{[\nu]}(x, \la) - \vv^{0[\nu]}(x, \la) = o(\rho^{\nu - 1}\exp(|\tau|x) (\rho T_1 + T_1) 
    \end{array}
    \right\}
    \\ \label{estPhi}
    \left.
    \begin{array}{r}
    \Phi^{[\nu]}(x, \la) = O(\rho^{\nu - 1} \exp(-|\tau|x) (T_1 + \rho T_1^{\perp}) \\
    \Phi^{[\nu]}(x, \la) - \Phi^{0[\nu]}(x, \la) = o(\rho^{\nu - 1} \exp(-|\tau|x) (T_1 + \rho T_1^{\perp})
    \end{array}
    \right\}
    \quad \rho \in G_{\de},
\end{gather}
for some $\de > 0$, each fixed $x \in [0, \pi]$, $\nu = 0, 1$ as $|\rho| \to \iy$. Here $y^{[0]} := y$.
\end{lem}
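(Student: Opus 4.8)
The plan is to derive both pairs of estimates from Theorem~\ref{thm:trans}, combined with the structure formula \eqref{phiCS} and the representation \eqref{relPhi} for the Weyl solution. First, for $\vv$: using \eqref{phiCS} we have $\vv(x,\la) = C(x,\la) T_1 + S(x,\la) T_1^{\perp}$ and similarly for the quasi-derivative $\vv^{[1]}(x,\la) = C^{[1]}(x,\la) T_1 + S^{[1]}(x,\la) T_1^{\perp}$. Plugging in the transformation-operator formulas from Theorem~\ref{thm:trans} for $C, C^{[1]}, S, S^{[1]}$, I would estimate each integral term by the Cauchy--Schwarz inequality: for a fixed $x$, $\bigl\| \int_0^x \mathscr K_j(x,t) \tfrac{\sin\rho t}{\rho}\,dt \bigr\| \le \|\mathscr K_j(x,\cdot)\|_{L_2} \cdot \bigl(\int_0^x |\tfrac{\sin\rho t}{\rho}|^2\,dt\bigr)^{1/2}$, and the latter factor is $o(\rho^{-1}\exp(|\tau|x))$ as $|\rho|\to\iy$ (the $o$, not merely $O$, comes because the exponential in $|\sin\rho t|^2$ is attained only at the endpoint $t = x$, so after dividing by $\exp(2|\tau|x)$ the integrand tends to $0$ pointwise and one applies dominated convergence). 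This gives $\vv(x,\la) = \vv^0(x,\la) + o(\rho^{-1}\exp(|\tau|x))$ on the $T_1$-component side and $o(\exp(|\tau|x))$ absorbed into the $(\rho T_1 + T_1)$ factor; collecting terms yields both lines of \eqref{estphi} for $\nu = 0$, and the $\nu = 1$ case is identical using the $C^{[1]}, S^{[1]}$ rows (note the extra continuous term $\mathscr C(x)$ in $C^{[1]}$ is $O(1) = o(\rho\exp(|\tau|x))$). Here $\vv^0$ from \eqref{phi0} supplies the leading term, whose norm is exactly $O(\exp(|\tau|x))$ on $\Ran T_1$ and $O(\rho^{-1}\exp(|\tau|x))$ on $\Ran T_1^{\perp}$, matching the stated matrix-factor form.

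For $\Phi$, I would use \eqref{relPhi}: $\Phi(x,\la) = \psi(x,\la) + \vv(x,\la) M(\la)$. The function $\psi(x,\la)$ satisfies estimates of the same type as $\vv$ by the analogous transformation-operator argument (it is a fixed linear combination of $C, S$ with coefficient matrices $\pm T_1, \pm T_1^{\perp}$, cf.\ \eqref{phi0}), but the decisive term is $\vv(x,\la) M(\la)$, where $M(\la)$ is given by \eqref{ME}. In the sector $G_\de$ the key estimate \eqref{W0below} gives $\|(W(\rho))^{-1}\| \le C_\de \exp(-|\tau|\pi)$ for large $|\rho|$ (via the Neumann-series argument already used in the proof of Theorem~\ref{thm:asymptal}: $W = W^0 + K$ with $K = o(\exp(|\tau|\pi))$), hence $\|E(\rho)\| = \|(W(\rho))^{-1} U(\rho)\| = O(1)$ since $\|U(\rho)\| = O(\exp(|\tau|\pi))$. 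Then from \eqref{ME}, $M(\la) = (T_1 + \rho T_1^{\perp}) E(\rho)(\rho^{-1}T_1 + T_1^{\perp})$, so multiplying the estimate for $\vv$ (which has the factor $(\rho T_1 + T_1)$, i.e.\ roughly $\rho$ on $\Ran T_1$ and $1$ on $\Ran T_1^{\perp}$) by the $(T_1 + \rho T_1^{\perp})$ on the left of $E$: the products $T_1 \cdot T_1$ and the cross terms combine so that $\vv(x,\la)(T_1 + \rho T_1^{\perp})$ is $O(\rho\exp(|\tau|x))$ uniformly, while the $\exp(|\tau|x)$ from $\vv$ times $\exp(-|\tau|\pi)$ from $(W)^{-1}$ and $\exp(|\tau|\pi)$ from $U$ gives the claimed $\exp(-|\tau|x)$ is \emph{not} automatic — one must instead track the precise exponential: $\|\vv(x,\la)\|\cdot\|M(\la)\|$ combined with the matrix factors yields $O(\rho^{-1}\exp((|\tau|x) + (-|\tau|\pi) + (|\tau|\pi))) = $ wait, that gives $\exp(|\tau|x)$, not $\exp(-|\tau|x)$. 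The resolution is that the relevant bound on $E(\rho)$ is sharper on the right side; more carefully, one uses \eqref{PhiPsi} instead, $\Phi(x,\la) = \Psi(x,\la)(V_1(\Psi))^{-1}$, where $\Psi(x,\la)$ is the solution from the right endpoint, so $\Psi^{[\nu]}(x,\la) = O(\rho^{\nu-1}\exp(|\tau|(\pi - x))(T_2 + \rho T_2^{\perp}))$ by the same transformation-operator estimates applied from $x = \pi$, and $\|(V_1(\Psi))^{-1}\| = O(\rho^{-1}\exp(-|\tau|\pi))$ again by \eqref{W0below} (since $V_1(\Psi)$ has the same structure as $V_2(\vv)$ with the roles of the endpoints swapped). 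Multiplying gives the factor $\exp(|\tau|(\pi - x))\exp(-|\tau|\pi) = \exp(-|\tau|x)$, which is precisely \eqref{estPhi}; the $o$-refinement follows by subtracting the corresponding zero-case identity $\Phi^0 = \Psi^0 (V_1(\Psi^0))^{-1}$ and using the $o$-estimates on $\Psi - \Psi^0$ and the difference of the inverses.

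The main obstacle is exactly the bookkeeping of the exponential rates and the matrix projector factors in the $\Phi$-estimate: one must choose the right representation (\eqref{PhiPsi} rather than \eqref{relPhi}) so that the $\exp(|\tau|\pi)$ growth of the "bad" solution from one endpoint is cancelled by the $\exp(-|\tau|\pi)$ decay of the inverse Wronskian-type matrix, leaving the correct $\exp(-|\tau|x)$, and simultaneously verify that the surviving matrix factor collapses to $(T_1 + \rho T_1^{\perp})$ after reducing the $x=\pi$ initial data $(T_2, T_2^{\perp})$ through $(V_1(\Psi))^{-1}$ — this is where the $W^0$-type structure of $V_1(\Psi)$ and estimate \eqref{W0below} are used in full strength. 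The $\vv$-estimates, by contrast, are routine Cauchy--Schwarz bounds on the transformation-operator integrals and present no difficulty beyond the standard $o$ versus $O$ distinction for oscillatory-exponential integrals.
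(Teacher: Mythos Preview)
Your proposal is correct and, after your own mid-course correction, follows essentially the same route as the paper: the $\vv$-estimates come from \eqref{phiCS} and the transformation operators of Theorem~\ref{thm:trans}, while for $\Phi$ the paper also uses the representation \eqref{PhiPsi} via $\Psi$ and $(V_1(\Psi))^{-1}$ rather than \eqref{relPhi}, precisely because of the exponential-bookkeeping issue you identified. The paper records the needed structure $(V_1(\Psi))^{-1} = (T_2 + \rho T_2^{\perp})\,O(\exp(-|\tau|\pi))\,(\rho^{-1}T_1 + T_1^{\perp})$ explicitly, which is the matrix-factor collapse you anticipate, and then obtains the $o$-refinement from the telescoping identity $\Phi - \Phi^0 = (\Psi - \Psi^0)(V_1(\Psi))^{-1} + \Psi^0\bigl((V_1(\Psi))^{-1} - (V_1^0(\Psi^0))^{-1}\bigr)$, exactly as you outline.
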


\begin{proof}
Fix an $x \in [0, \pi]$. Using~\eqref{phiCS} and Theorem~\ref{thm:trans}, we get
\begin{align*}
    \vv(x, \la) & = (\cos \rho x \, T_1 + \sin \rho x \, T_1^{\perp} + o(\exp(|\tau|x))(T_1 + \rho^{-1} T_1^{\perp}), \\
    \vv^{[1]}(x, \la) & = (-\sin \rho x \, T_1 + \cos \rho x \, T_1^{\perp} + o(\exp(|\tau|x)) (\rho T_1 + T_1^{\perp}),
\end{align*}
as $|\rho| \to \iy$. These asymptotics yield \eqref{estphi}. Similar asymptotics hold for the solution $\Psi(x, \la)$ appearing in~\eqref{PhiPsi}:
\begin{align*}
    \Psi(x, \la) & = (\cos \rho (\pi - x) \, T_2 - \sin \rho (\pi - x) \, T_2^{\perp} + o(\exp(|\tau|(\pi - x)))(T_2 + \rho^{-1} T_2^{\perp}), \\
    \Psi^{[1]}(x, \la) & = (\sin \rho (\pi - x) \, T_2 + \cos \rho (\pi - x) \, T_2^{\perp} + o(\exp(|\tau|(\pi - x))) (\rho T_2 + T_2^{\perp}),
\end{align*}
as $|\rho| \to \iy$. These asymptotics imply
\begin{equation} \label{estPsi}
\left.
\begin{array}{r}
\Psi^{[\nu]}(x, \la) = O(\rho^{\nu - 1}\exp(|\tau|(\pi - x))) (\rho T_2 + T_2^{\perp}) \\
\Psi^{[\nu]}(x, \la) - \Psi^{0[\nu]}(x, \la) = o(\rho^{\nu - 1}\exp(|\tau|(\pi - x))) (\rho T_2 + T_2^{\perp})
\end{array} \right\}
\end{equation}
as $|\rho| \to \iy$, $\nu = 0, 1$.

Analogously to~\eqref{V2}, one can obtain
$$
V_1(\Psi) = (\rho T_1 + T_1^{\perp}) ((W^0(\rho))^{\dagger} + K(\rho)) (T_2 + \rho^{-1} T_2^{\perp}).
$$
Using~\eqref{W0below}, we get
\begin{equation} \label{VPsi}
\left.
\begin{array}{r}
(V_1(\Psi))^{-1} = (T_2 + \rho T_2^{\perp}) O(\exp(-|\tau|\pi)) (\rho^{-1} T_1 + T_1^{\perp}) \\
(V_1(\Psi))^{-1} - (V_1^0(\Psi^0))^{-1} = (T_2 + \rho T_2^{\perp}) o(\exp(-|\tau|\pi)) (\rho^{-1} T_1 + T_1^{\perp})
\end{array}\right\}
\end{equation}
as $|\rho| \to \iy$, $\rho \in G_{\de}$.
Relation \eqref{PhiPsi} implies
\begin{equation} \label{difPhi}
\Phi(x, \la) - \Phi^0(x, \la) = (\Psi(x, \la) - \Psi^0(x, \la)) (V_1(\Psi))^{-1} + \Psi^0(x, \la) ((V_1(\Psi))^{-1} - (V_1^0(\Psi))^{-1}).
\end{equation}
Using~\eqref{PhiPsi} and \eqref{estPsi}-\eqref{difPhi}, we arrive at~\eqref{estPhi}.
\end{proof}

\section{Completeness and Riesz-basis property}

In this section, we study the completeness and the Riesz-basis property of a special sequence of vector functions constructed by the spectral data $\{ \la_{nk}, \al_{nk} \}_{(n, k) \in J}$. Such sequences play an important role in inverse problem theory for matrix Sturm-Liouville operators (see \cite{CK09, MT09, Bond19}).

Consider a group of multiple eigenvalues $\la_{n_1 k_1} = \la_{n_2 k_2} = \dots = \la_{n_r k_r}$ maximal by inclusion. Lemma~\ref{lem:ranks} implies $\rank(\al_{n_1 k_1}) = r$. Define the matrices
$$
T_{nk} := \left\{ \begin{array}{ll}
                T_1 + \rho_{nk} T_1^{\perp}, \quad & \rho_{nk} \ne 0, \\
                I, \quad & \rho_{nk} = 0,
          \end{array} \right.
\qquad          
B_{nk} := \frac{\pi}{2} T_{nk}^{-1} \al_{nk} T_{nk}^{-1}.
$$
Clearly, $\Ran B_{n_1 k_1}$ is an $r$-dimensional subspace in $\mathbb C^m$. Choose an orthonormal basis $\{ \mathcal E_{n_j k_j} \}_{j = 1}^r$ in this subspace. This choice is non-unique. The assertions below are valid for any choice of the basis. Thus, we have defined the vector sequence $\{ \mathcal E_{nk} \}_{(n, k) \in J}$. Consider the sequence of vector functions
\begin{equation} \label{defY}
\mathcal Y := \{ Y_{nk} \}_{(n, k) \in J}, \quad
Y_{nk}(x) := \left\{ \begin{array}{ll}
                (\cos (\rho_{nk}x) T_1 + \sin (\rho_{nk}x) T_1^{\perp}) \mathcal E_{nk}, \quad & \rho_{nk} \ne 0, \\
                (T_1 + x T_1^{\perp}) \mathcal E_{nk}, \quad & \rho_{nk} = 0.
            \end{array}\right.
\end{equation}

\begin{thm} \label{thm:complete}
The sequence $\mathcal Y$ is complete in $L_2((0, \pi); \mathbb C^m)$.
\end{thm}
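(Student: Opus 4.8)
The plan is to prove completeness by contradiction: suppose $f \in L_2((0,\pi); \mathbb C^m)$ is orthogonal to every $Y_{nk}$, and build from $f$ a scalar entire function that vanishes at all points $n + r_k$ (with appropriate multiplicities matching the geometry of the $\mathcal E_{nk}$), yet has exponential type $\le \pi$ and is square-integrable on the real axis; standard Paley--Wiener/density arguments will then force $f \equiv 0$. More precisely, I would introduce the vector-valued entire function
\begin{equation} \label{pl:F}
F(\rho) := \int_0^\pi \bigl(\cos(\rho x)\, T_1 + \sin(\rho x)\, T_1^\perp\bigr)^{\dagger} f(x)\, dx,
\end{equation}
which is entire of exponential type $\le \pi$ in $\rho$, and whose restriction to the real line lies in $L_2(\mathbb R; \mathbb C^m)$ by Parseval. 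The hypothesis $(f, Y_{nk}) = 0$ says precisely that $\mathcal E_{nk}^{\dagger} F(\rho_{nk}) = 0$ for all $(n,k) \in J$ (with the obvious modification at $\rho_{nk}=0$, where one uses $(T_1 + xT_1^\perp)$ in place of the trigonometric matrix). Since at a maximal group of coinciding eigenvalues the vectors $\{\mathcal E_{n_j k_j}\}_{j=1}^r$ form an orthonormal basis of the range of $B_{n_1 k_1}$, and since $\operatorname{rank} B_{nk}$ tracks the multiplicity of $\la_{nk}$, the conditions at a given real point $\la$ cut down $F$ by exactly the dimension of that range.

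The bridge between these vanishing conditions and an honest growth/counting estimate is the connection with $V_2(\vv)$ and the residues $\al_{nk}$. Using \eqref{Val}, $V_2(\vv(x,\la_{nk}))\al_{nk} = 0$, together with \eqref{phiCS} and the transformation-operator representation of Theorem~\ref{thm:trans}, one sees that the columns of $\al_{nk}$ — equivalently the vectors $T_{nk}\mathcal E_{nk}$ after normalization — span exactly the relevant subspace of $\Ker V_2(\vv(\cdot,\la_{nk}))$. Hence the scalar entire function
\begin{equation} \label{pl:G}
G(\rho) := \det\!\bigl(W(\rho)\bigr) \cdot \bigl(\text{a suitable bounded correction}\bigr)
\end{equation}
and the product $w^0(\rho)$ (or $\det W(\rho)$) have precisely the zero set $\{n + r_k\}$ with multiplicities equal to the eigenvalue multiplicities, by Lemma~\ref{lem:mult} and Theorem~\ref{thm:asymptla}. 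The standard device (as in \cite{CK09, Bond19}) is then to show that $\det W(\rho)$ divides each scalar component obtained by pairing $F(\rho)$ against a constant vector, after accounting for the projector structure; concretely, one checks that $(W(\rho))^{-1}$ applied to $F$ (suitably interpreted through $\vv$ and $\psi$) is entire because every pole of $(W(\rho))^{-1}$ is killed by the orthogonality conditions, and then estimates it using \eqref{W0below}, \eqref{difW}, and Lemma~\ref{lem:estsol} to conclude it is of exponential type strictly less than $\pi$ while still $L_2$ on the line. An entire function of exponential type $<\pi$ that is $L_2(\mathbb R)$ and vanishes on (a perturbation of) $\mathbb Z + \{r_k\}$, a set of density $1$ per component, must vanish identically; unravelling this gives $F \equiv 0$, and then the completeness of the trigonometric system $\{\cos \rho x, \sin \rho x\}$ forces $T_1 f = 0$ and $T_1^\perp f = 0$, i.e. $f = 0$.

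The main obstacle I anticipate is the bookkeeping of multiplicities and the projector structure at points $r_k$ where $W^0$ is degenerate: one must verify that the orthogonality conditions $\mathcal E_{nk}^\dagger F(\rho_{nk}) = 0$ cancel \emph{exactly} the right-order poles of $(W(\rho))^{-1}$ — no more, no less — so that the resulting entire function has type genuinely below $\pi$ rather than merely $\le \pi$. This requires using \eqref{sym1}/\eqref{Val} to identify $\Ran \al_{nk}$ with the correct eigenspace and matching it against $\Ker W(\rho_{nk})$ via \eqref{V2} and \eqref{phiCS}, and then pushing the asymptotics \eqref{asymptla}, \eqref{asymptal} through a Paley--Wiener-type argument. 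The special point $\rho = 0$, where $J$ omits the indices $k = \overline{1, p^\perp}$ and the definition \eqref{defY} switches to $(T_1 + xT_1^\perp)\mathcal E_{nk}$, must be handled separately but is governed by Lemmas~\ref{lem:zero1}, \ref{lem:zero2} and relation \eqref{A1}. Once the type-$<\pi$ claim is in hand, the completeness conclusion follows from classical results on exponential systems.
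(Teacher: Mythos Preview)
Your overall strategy coincides with the paper's: form an entire vector function from the orthogonal element, use the orthogonality conditions together with \eqref{Val} and the rank identity $\rank(\al_{nk}) + \rank(V_2(\vv(\cdot,\la_{nk}))) = m$ to kill the singularities of an inverse, then finish with a growth argument. Where you diverge is in the finishing move, and that divergence creates the very obstacle you flag.

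The paper works in the $\la$-plane, not the $\rho$-plane, and uses Liouville's theorem rather than Paley--Wiener. Concretely, it defines the row-vector
\[
\ga(\la) := \int_0^\pi (h(x))^{\dagger}\Bigl(\cos\rho x\, T_1 + \tfrac{\sin\rho x}{\rho}\,T_1^{\perp}\Bigr)\,dx,
\]
observes that $\ga(\la_{nk})\al_{nk}=0$ forces $\ga(\la_{nk})$ to lie in the row space of $V_2(\vv(\cdot,\la_{nk}))$ (since $\Ran\al_{nk}$ and the row space of $V_2(\vv)$ are complementary by \eqref{Val} and the rank count), and hence that $F(\la):=\ga(\la)\,(V_2(\vv))^{-1}$ is \emph{entire}. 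The growth estimate from \eqref{W0below} and \eqref{V2} then gives $F(\rho^2)=o(1)$ for $\rho\in G_\de$, $|\rho|\to\infty$, so Liouville yields $F\equiv 0$, whence $\ga\equiv 0$ and $h=0$.

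This sidesteps entirely your worry about getting exponential type \emph{strictly} below $\pi$: after dividing by $V_2(\vv)$ (rather than $W(\rho)$ with an unspecified ``bounded correction''), the quotient is not merely of smaller type but actually $o(1)$, and Liouville applies directly. Your Paley--Wiener/density route could in principle be made to work, but it is more delicate (you would need a genuine uniqueness theorem for perturbed exponential systems at exactly the critical density), whereas the paper's argument is a two-line application of Liouville once the entireness of $F$ is in hand. The special treatment of $\rho=0$ that concerns you is also absorbed automatically: $\ga(\la)$ is defined as a function of $\la$, so $\rho=0$ is not a distinguished point, and the modified form of $Y_{nk}$ at $\rho_{nk}=0$ is just the $\rho\to 0$ limit of the generic formula after multiplying by $T_{nk}^{-1}$.
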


\begin{proof}
Let a vector function $h \in L_2((0, \pi); \mathbb C^m)$ be such that
$(h, Y_{nk}) = 0$ for all $(n, k) \in J$. This implies
$$
\int_0^{\pi} (h(x))^{\dagger} \left( \cos (\rho_{nk} x) T_1 + \frac{\sin (\rho_{nk} x)}{\rho_{nk}} T_1^{\perp}\right) \al_{nk} \, dx = 0, \quad (n, k) \in J.
$$
Consequently, the row-vector function
$$
\ga(\la) := \int_0^{\pi} (h(x))^{\dagger} \left( \cos \rho x \, T_1 + \frac{\sin \rho x}{\rho} T_1^{\perp} \right) \, dx
$$
is entire in $\la$ and has the following properties:

\smallskip

(i) $\ga(\la_{nk}) \al_{nk} = 0$, $(n, k) \in J$.

\smallskip

(ii) $\ga(\rho^2) (T_1 + \rho T_1^{\perp}) = o(\exp(|\tau|\pi))$, $|\rho| \to \iy$.

\smallskip

If the multiplicity of the eigenvalue $\la_{nk}$ is $m_{nk}$, then 
$$
\rank (\al_{nk}) = m_{nk}, \quad \rank (V_2(\vv(x, \la_{nk})) = m - m_{nk}.
$$
Therefore, relation~\eqref{Val} and property (i) imply 
$$
\ga(\la_{nk}) = D_{nk} V_2(\vv(x, \la_{nk})), \quad D_{nk}^{\dagger} \in \mathbb C^m, \quad (n, k) \in J.
$$
Consequently, the vector function $F(\la) := \ga(\la) (V_2(\vv(x, \la)))^{-1}$ is entire in $\la$. It follows from \eqref{W0below} and
\eqref{V2} that
$$
(V_2(\vv))^{-1} = (T_1 + \rho T_1^{\perp}) O(\exp(-|\tau|\pi)) (\rho^{-1} T_2 + T_2^{\perp}), \quad \rho \in G_{\de}, \quad |\rho| \to \iy.
$$
Using this estimate and property (ii), we obtain $F(\rho^2) = o(1)$, $\rho \in G_{\de}$, $|\rho| \to \iy$. Liouville's Theorem yields $F(\la) \equiv 0$, so $\ga(\la) \equiv 0$. Hence, $h = 0$ in $L_2((0, \pi); \mathbb C^m)$, so the sequence $\mathcal Y$ is complete.
\end{proof}

In particular, Theorem~\ref{thm:complete} yields that the following sequence $\mathcal Y^0$ related to the problem $L^0$ is complete in $L_2((0, \pi); \mathbb C^m)$:
$$
\mathcal Y^0 := \{ Y_{nk}^0 \}_{(n, k) \in J}, \quad 
Y_{nk}^0 := (\cos (n + r_k) x \, T_1 + \sin (n + r_k) x \, T_1^{\perp}) \mathcal E_k^0.
$$
Here $\{ \mathcal E_s^0 \}_{s \in J_k}$ is a fixed orthonormal basis in $\Ran A_k$ for $k \in \mathcal J$, $J_k := \{ s = \overline{1, m} \colon r_s = r_k \}$, $\mathcal J := \{ 1 \} \cup \{ s = \overline{2, m} \colon r_s \ne r_{s-1} \}$. We additionally require $T_1 \mathcal E_k^0 = 0$ for $k = \overline{1, p^{\perp}}$ and $T_1^{\perp} \mathcal E_k^0 = 0$ for $k = \overline{p^{\perp} + 1, p^{\perp} + p}$. The latter requirements can always be achieved because of~\eqref{A1}.
Thus, $\{ \mathcal E_k^0 \}_{k = 1}^m$ is an orthonormal basis in $\mathbb C^m$.

Our next goal is to show that $\mathcal Y$ is a Riesz basis. We will prove this fact for a sequence of a more general form, not necessarily related to the problem $L$. 
Let $T_1, T_2 \in \mathbb C^{m \times m}$ be arbitrary orthogonal projection matrices.
Suppose that $J$ and $\{ r_k \}_{k = 1}^m$ are defined as in Theorem~\ref{thm:asymptla}.
Let $\{ \rho_{nk} \}_{(n, k) \in J}$ be arbitrary complex numbers satisfying asymptotics~\eqref{asymptla}.
Let $\{ B_{nk} \}_{(n, k) \in J}$ be arbitrary matrices from $\mathbb C^{m \times m}$ such that $B_{nk} = B_{nk}^{\dagger} \ge 0$, $(n, k) \in J$. For any group of multiple values $\rho_{n_1 k_1} = \rho_{n_2 k_2} = \dots = \rho_{n_r k_r}$ maximal by inclusion, $(n_1, k_1) < (n_2, k_2) < \dots < (n_r, k_r)$, we assume that $B_{n_1 k_1} = B_{n_2 k_2} = \dots = B_{n_r k_r}$ and $\rank (B_{n_1 k_1}) = r$. Denote $B'_{n_1 k_1} = B_{n_1 k_1}$, $B'_{n_j k_j} = 0$, $j = \overline{2, r}$. Suppose that the following asymptotic relation holds
\begin{equation} \label{asymptB}
B_n^{(k)} := \sum_{\substack{s = \overline{1, m} \\ r_s = r_k}} B'_{ns} = A_k + K_{nk}, \quad \{ \| K_{nk} \| \} \in l_2, \quad (n, k) \in J,
\end{equation}
where $\{ A_k \}_{k = 1}^m$ are the orthogonal projection matrices defined by~\eqref{defAk}.
By using these data $\{ \rho_{nk}, B_{nk} \}_{(n, k) \in J}$, choose the basis $\{ \mathcal E_{nk} \}$ and construct the sequence $\mathcal Y$ by~\eqref{defY}. Then the following theorem holds.

\begin{thm} \label{thm:Riesz}
If the sequence $\mathcal Y$ is complete in $L_2((0, \pi); \mathbb C^m)$, then it is a Riesz basis in $L_2((0, \pi); \mathbb C^m)$.
\end{thm}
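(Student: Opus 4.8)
The plan is to compare $\mathcal Y$ with the reference sequence $\mathcal Y^0$ built from the projections $\{A_k\}$ and the exponents $\{r_k\}$, and to invoke the classical Bari/Kadec-type theorem: a complete sequence that is quadratically close to a Riesz basis is itself a Riesz basis. So the argument has two halves. First I would establish that $\mathcal Y^0$ is a Riesz basis in $L_2((0,\pi);\mathbb C^m)$; second I would show $\sum_{(n,k)\in J}\|Y_{nk}-\widetilde Y_{nk}\|^2<\infty$ for a suitable reindexing/matching of $\mathcal Y$ against $\mathcal Y^0$ (up to a finite perturbation of low indices), and then combine with the hypothesis that $\mathcal Y$ is complete.

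For the first half, $\mathcal Y^0$ decomposes according to the orthogonal projectors: since $\{A_k\}_{k\in\mathcal J}$ are mutually orthogonal with $\sum A_k=I$, and the basis vectors $\{\mathcal E^0_s\}_{s\in J_k}$ lie in $\Ran A_k$ with the extra splitting into $\Ran T_1$ and $\Ker T_1$ parts (via~\eqref{A1}), each $Y^0_{nk}$ is either $\cos((n+r_k)x)\,\mathcal E^0_k$ (when $T_1^\perp\mathcal E^0_k=0$) or $\sin((n+r_k)x)\,\mathcal E^0_k$ (when $T_1\mathcal E^0_k=0$), or a $\cos/\sin$ mixture on a two-dimensional invariant block corresponding to a pair $r_k+r_s=1$. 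On each such block one reduces to the scalar fact that $\{\cos((n+r_k)x)\}\cup\{\sin((n+r_s)x)\}$, with $r_k+r_s=1$, is a Riesz basis of $L_2(0,\pi)$ — this is the standard perturbed-exponential (Kadec/Levin) result, the exponents $\{\pm(n+r_k)\}$ forming a uniformly separated set at unit density. Because the blocks are mutually orthogonal and span $\mathbb C^m$, the orthogonal direct sum of these scalar Riesz bases is a Riesz basis of $L_2((0,\pi);\mathbb C^m)$. (The two exceptional values $\rho=0$ vectors $(T_1+xT_1^\perp)\mathcal E^0_k$ replace $\sin$ by $x$ on the relevant one-dimensional pieces and cost only a finite-rank, hence harmless, modification.)

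For the second half I would estimate $Y_{nk}-Y^0_{nk}$ in two contributions. One is the perturbation of the exponent: $\cos(\rho_{nk}x)-\cos((n+r_k)x)=O(|\varkappa_{nk}|)$ uniformly on $[0,\pi]$, and likewise for the sine term and for the $\rho_{nk}=0$ terms; since $\{\varkappa_{nk}\}\in l_2$ by~\eqref{asymptla}, this part is $l_2$-summable in norm. The other is the perturbation of the directions $\mathcal E_{nk}$ versus $\mathcal E^0_k$: here I use~\eqref{asymptB}, which says the aggregated projectors $B_n^{(k)}=A_k+K_{nk}$ converge to $A_k$ with $l_2$ error; a standard perturbation-of-spectral-projector estimate then gives that the orthonormal basis $\{\mathcal E_{nk}\}_{s\in J_k}$ of $\Ran B_n^{(k)}$ can be chosen (after the right pairing with the basis of $\Ran A_k$) so that $\|\mathcal E_{nk}-\mathcal E^0_{k(s)}\|=O(\|K_{nk}\|)$, hence again $l_2$-summable. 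Multiplying by the bounded matrices $\cos\rho_{nk}x\,T_1+\sin\rho_{nk}x\,T_1^\perp$ preserves the $l_2$ bound. Therefore $\mathcal Y$ is quadratically close to the Riesz basis $\mathcal Y^0$ after a finite modification, and since $\mathcal Y$ is assumed complete, the Bari theorem (quadratic closeness plus completeness $\Rightarrow$ Riesz basis) yields the claim.

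The main obstacle I expect is the direction-perturbation step: the choice of $\{\mathcal E_{nk}\}$ is genuinely non-unique, and one must show that \emph{some} choice is $l_2$-close to the fixed $\{\mathcal E^0_k\}$, with the pairing within each multiplicity block handled consistently. This requires controlling $\|B_n^{(k)}-A_k\|$ and translating it, via the gap between ranges of nearby projectors, into a bound on the principal angles between $\Ran B_n^{(k)}$ and $\Ran A_k$; one then picks $\{\mathcal E_{nk}\}$ as the image of $\{\mathcal E^0_k\}$ under the partial isometry from the polar decomposition of $A_k B_n^{(k)}$ (or equivalently via $B_n^{(k)}\mathcal E^0_k$ renormalized) and verifies the $O(\|K_{nk}\|)$ estimate. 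A secondary technical point is the bookkeeping for the finitely many small indices where asymptotics do not yet control things and where $\rho_{nk}=0$ or coincidences occur — these only affect a finite-dimensional subspace and do not disturb quadratic closeness or the Riesz property.
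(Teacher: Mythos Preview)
Your overall framework---compare $\mathcal Y$ with a reference Riesz basis and invoke Bari's theorem---is the same as the paper's, and the exponent-perturbation part is fine. The genuine gap is in the direction-perturbation step.

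You describe $\{\mathcal E_{ns}\}_{s\in J_k}$ as ``the orthonormal basis of $\Ran B_n^{(k)}$'', but that is not the setup: the $\mathcal E_{ns}$ are chosen as orthonormal bases of $\Ran B_{n_j k_j}$ \emph{separately for each group of coincident $\rho$'s}. If for a fixed $n$ the values $\{\rho_{ns}\}_{s\in J_k}$ are all distinct, then the $\mathcal E_{ns}$ come from different rank-one pieces $B_{ns}$ and need be neither mutually orthogonal nor close to any fixed orthonormal frame $\{\mathcal E^0_s\}$. Moreover, the theorem is stated for \emph{every} admissible choice of $\{\mathcal E_{nk}\}$, so you may not simply pick a convenient one; and the ``partial isometry'' choice you suggest does not respect the block structure imposed by the multiplicity groups. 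Here is an explicit obstruction: take $m=2$, $T_1=I$, $|J_k|=2$ with $A_k=I$, all $\rho_{ns}$ simple, and let $B_{n1},B_{n2}$ be the rank-one projectors onto $(\cos n,\sin n)$ and $(-\sin n,\cos n)$. Then $B_n^{(k)}=I=A_k$ exactly, so $K_{nk}=0$; yet $\mathcal E_{n1}=(\cos n,\sin n)$ rotates with $n$ and is not $l_2$-close to any fixed unit vector, under any pairing. Hence $\sum_n\|Y_{n1}-Y^0_{n1}\|^2=\infty$ and quadratic closeness to $\mathcal Y^0$ fails, although the theorem's conclusion still holds in this example.

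The paper avoids this by \emph{not} matching $\mathcal E_{nk}$ to a fixed $\mathcal E^0_k$. It sets $\tilde{\mathcal E}_{nk}:=A_k\mathcal E_{nk}$ and proves only that $\|A_k^{\perp}\mathcal E_{nk}\|\in l_2$, i.e.\ $\mathcal E_{nk}$ is $l_2$-close to the \emph{subspace} $\Ran A_k$, not to a specific vector in it. The comparison sequence $\mathcal Y_N$ uses these projected (generally non-orthonormal) $\tilde{\mathcal E}_{nk}$ with the unperturbed exponents $n+r_k$, and its Riesz-basis property is established not by quadratic closeness to $\mathcal Y^0$ but by verifying the two-sided inequality of Proposition~\ref{prop:ineq} directly: orthogonality between blocks with different $n$ or different $r_k$ reduces the lower bound to the uniform estimate $s_{min}(\tilde{\mathcal E}_n^{(k)})\ge\delta>0$, obtained from~\eqref{asymptB} via Proposition~\ref{prop:sing}. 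This uniform linear independence is exactly what absorbs the ``rotation within $\Ran A_k$'' that breaks your quadratic-closeness argument.
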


For the proof of Theorem~\ref{thm:Riesz}, we need the following propositions. (Proposition~\ref{prop:sing} is \cite[Theorem~2.5.3]{GV96} and Proposition~\ref{prop:ineq} follows from~\cite[Theorem~3.6.6]{Christ03}).

\begin{prop} \label{prop:sing}
Let $A \in \mathbb C^{m \times n}$ and $k < \rank(A)$. Denote by $s_1 \ge s_2 \ge \dots \ge s_{\min(n, m)}$
the singular values of $A$. Then
$$
    \min_{\rank(B) = k} \| A - B \| = s_{k + 1}.
$$
\end{prop}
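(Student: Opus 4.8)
The plan is to establish this classical Eckart--Young--Mirsky-type result in two independent halves: first exhibit a rank-$k$ matrix $B$ for which $\| A - B \| = s_{k+1}$, and then show that $s_{k+1}$ is a lower bound for $\| A - B \|$ over \emph{all} matrices $B$ of rank $k$. Together these give that the minimum is attained and equals $s_{k+1}$.

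First I would fix a singular value decomposition $A = U \Sigma V^{\dagger}$, with $U \in \mathbb C^{m \times m}$ and $V \in \mathbb C^{n \times n}$ unitary and $\Sigma$ the $m \times n$ matrix carrying $s_1 \ge \dots \ge s_{\min(m,n)}$ on its main diagonal. Denoting by $u_i$, $v_i$ the columns of $U$, $V$, this reads $A = \sum_i s_i u_i v_i^{\dagger}$. For the achievability half I would set $B := \sum_{i=1}^k s_i u_i v_i^{\dagger}$. The hypothesis $k < \rank(A)$ means precisely that $s_{k+1} > 0$, hence $s_1 \ge \dots \ge s_k \ge s_{k+1} > 0$, so $B$ has rank exactly $k$. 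Moreover $A - B = \sum_{i > k} s_i u_i v_i^{\dagger}$, whose spectral norm is its largest remaining singular value, namely $s_{k+1}$. This shows $\min_{\rank(B) = k} \| A - B \| \le s_{k+1}$.

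For the lower bound, let $B$ be any matrix with $\rank(B) = k$, so that $\dim \Ker(B) = n - k$. The subspace $W := \text{span}\{ v_1, \dots, v_{k+1} \} \subset \mathbb C^n$ spanned by the leading $k+1$ right singular vectors has dimension $k+1$ (here $k + 1 \le \rank(A) \le n$ guarantees these columns exist), and since $(n - k) + (k + 1) = n + 1 > n$, the intersection $\Ker(B) \cap W$ is nontrivial. Choosing a unit vector $z$ in this intersection and expanding $z = \sum_{i=1}^{k+1} c_i v_i$ with $\sum_{i=1}^{k+1} |c_i|^2 = 1$, I would use $B z = 0$ and the orthonormality of the $u_i$ to compute
$$\| (A - B) z \|^2 = \| A z \|^2 = \sum_{i=1}^{k+1} |c_i|^2 s_i^2 \ge s_{k+1}^2 \sum_{i=1}^{k+1} |c_i|^2 = s_{k+1}^2.$$
Since $\| A - B \| \ge \| (A - B) z \| \ge s_{k+1}$ for every such $B$, combining this with the first half yields the claim.

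The argument is essentially routine, so I do not expect a genuine obstacle; the only points requiring care are the dimension count in the second half (producing a nonzero vector that is annihilated by $B$ yet supported on the top $k+1$ singular directions) and the bookkeeping for rectangular $\Sigma$ in the first half, so that $\rank(B) = k$ holds on the nose. This is exactly where the assumption $k < \rank(A)$, equivalently $s_{k+1} > 0$, enters both halves.
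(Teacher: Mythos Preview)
Your proof is correct and complete; it is the standard Eckart--Young--Mirsky argument. Note, however, that the paper does not actually prove this proposition: it is quoted as a known result, with a citation to \cite[Theorem~2.5.3]{GV96} (Golub and Van Loan), so there is no ``paper's own proof'' to compare against. The argument you give is essentially the one found in that reference.
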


\begin{prop} \label{prop:ineq}
Let $\{ f_n \}_{n = 1}^{\iy}$ be a sequence in a Hilbert space $H$. The sequence $\{ f_n \}_{n = 1}^{\iy}$ is a Riesz basis in $H$ if and only if it is complete in $H$ and there exist constants $M_1, M_2 > 0$ such that for every finite scalar sequence $\{ b_n \}$ one has
\begin{equation} \label{ineq}
M_1 \sum |b_n|^2 \le \left\| \sum b_n f_n \right\|^2 \le M_2 \sum |b_n|^2.
\end{equation}
\end{prop}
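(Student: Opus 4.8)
The plan is to invoke Proposition~\ref{prop:ineq}: since completeness of $\mathcal Y$ is assumed in the hypothesis, it suffices to produce constants $M_1, M_2 > 0$ for which the two-sided inequality~\eqref{ineq} holds for every finite scalar sequence. I would obtain this by comparing $\mathcal Y$ with the explicitly known model sequence $\mathcal Y^0 = \{ Y_{nk}^0 \}$ attached to the problem $L^0$ and by exploiting an $l_2$-closeness between the two systems, in the spirit of Bari's theorem.

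First I would establish that the model sequence $\mathcal Y^0$ is itself a Riesz basis. Because $\{ \mathcal E_k^0 \}_{k=1}^m$ is a fixed orthonormal basis of $\mathbb C^m$ and, by the construction preceding the theorem, $Y_{nk}^0$ is supported (through $T_1$ and $T_1^{\perp}$) along these fixed directions, the system decouples into finitely many scalar trigonometric families of the form $\{ \cos((n+r_k)x) \}$ and $\{ \sin((n+r_k)x) \}$. Each such family reduces to an exponential system $\{ e^{i(n+r_k)x} \}_{n \in \mathbb Z}$, which is a Riesz basis by classical Paley--Wiener/exponential-basis theory. Assembling these over the finitely many distinct values $r_k$ and over the orthonormal directions $\mathcal E_k^0$, and invoking Proposition~\ref{prop:ineq}, yields that $\mathcal Y^0$ is a Riesz basis with explicit bounds $M_1^0, M_2^0$.

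Next I would prove the quadratic closeness $\sum_{(n,k) \in J} \| Y_{nk} - Y_{nk}^0 \|^2 < \iy$. Comparing~\eqref{defY} with the definition of $Y_{nk}^0$, the discrepancy splits into two contributions. The trigonometric part is controlled by $|\rho_{nk} - \rho_{nk}^0| = |\varkappa_{nk}|$, which lies in $l_2$ by the hypothesis~\eqref{asymptla}. The vector part requires comparing the chosen orthonormal bases $\{ \mathcal E_{nk} \}$ with the model bases $\{ \mathcal E_k^0 \}$. Here I would use~\eqref{asymptB}, which forces $B_n^{(k)} = A_k + K_{nk}$ with $\{ \| K_{nk} \| \} \in l_2$; since $A_k$ is an orthogonal projection with spectrum in $\{ 0, 1 \}$ and hence a spectral gap, Proposition~\ref{prop:sing} (best rank-$d_k$ approximation, $d_k = \rank A_k$) shows that the rank-$d_k$ subspace spanned by the $\{ \mathcal E_{ns} \}_{s \colon r_s = r_k}$ approaches $\Ran A_k$ at the rate $\| K_{nk} \|$. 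Consequently, after the admissible unitary rotation of the basis within each multiple-eigenvalue subspace, one can arrange $\sum \| \mathcal E_{nk} - \mathcal E_k^0 \|^2 < \iy$, giving the claimed quadratic closeness.

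Finally, I would close by the perturbation argument. Define the linear map $T$ on $\mathcal Y^0$ by $T Y_{nk}^0 := Y_{nk}$ and extend it; the quadratic closeness just established makes $T - I$ a Hilbert--Schmidt (in particular compact) operator relative to the Riesz basis $\mathcal Y^0$, so $T = I + (\text{compact})$ is bounded and Fredholm of index zero. The assumed completeness of $\mathcal Y$ means $T$ has dense range, whence $T$ is surjective and, by the Fredholm alternative, invertible. As the image of the Riesz basis $\mathcal Y^0$ under the bounded invertible operator $T$, the sequence $\mathcal Y$ is a Riesz basis; equivalently, the two-sided inequality~\eqref{ineq} transfers from $\mathcal Y^0$ to $\mathcal Y$, and Proposition~\ref{prop:ineq} gives the conclusion. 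I expect the main obstacle to be the vector part of the quadratic-closeness step: the choice of the orthonormal system $\{ \mathcal E_{nk} \}$ inside each multiple-eigenvalue subspace is non-unique, and the families corresponding to distinct $\rho$-values within one $r_k$-group must be shown to combine into an (asymptotically orthonormal) basis of the limiting range $\Ran A_k$, a subspace-perturbation estimate for which Proposition~\ref{prop:sing} is the essential tool.
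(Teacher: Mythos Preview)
Your proposal does not address Proposition~\ref{prop:ineq}. That proposition is a classical characterization of Riesz bases which the paper does not prove at all; it is merely cited as following from \cite[Theorem~3.6.6]{Christ03}. Your write-up instead \emph{invokes} Proposition~\ref{prop:ineq} as a tool and sketches a proof of a different statement, namely Theorem~\ref{thm:Riesz} (that the sequence $\mathcal Y$ is a Riesz basis if it is complete). So as a proof of the stated proposition there is nothing to compare: neither you nor the paper proves it.

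If your intended target was Theorem~\ref{thm:Riesz}, then your strategy is in the right spirit but differs from the paper's in one structural point, and that point is exactly where your own misgiving lies. You try to compare $\mathcal Y$ directly with the fixed model system $\mathcal Y^0=\{Y_{nk}^0\}$ built from the \emph{fixed} orthonormal vectors $\mathcal E_k^0$; to get $l_2$-closeness of the vector parts you are forced to perform a ``unitary rotation within each multiple-eigenvalue subspace'', and then you still owe an argument that the Riesz-basis property is invariant under such block-unitary changes of frame (it is, but it has to be said, since the theorem is asserted for \emph{any} choice of $\{\mathcal E_{nk}\}$). The paper sidesteps this entirely by comparing $\mathcal Y$ not with $\mathcal Y^0$ but with an intermediate sequence $\mathcal Y_N$ whose vectors are $\tilde{\mathcal E}_{nk}:=A_k\mathcal E_{nk}$, i.e.\ the orthogonal projection of the \emph{given} $\mathcal E_{nk}$ onto $\Ran A_k$. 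Quadratic closeness then reduces to $\{\|A_k^{\perp}\mathcal E_{nk}\|\}\in l_2$, which is exactly what Proposition~\ref{prop:sing} and~\eqref{asymptB} deliver without any rotation; the paper then verifies the two-sided inequality~\eqref{ineq} for $\mathcal Y_N$ directly from the orthogonality relations~\eqref{propAk}. Your Bari/Fredholm closing argument is fine and is implicitly what the paper uses as well once $\mathcal Y_N$ is known to be a Riesz basis.
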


\begin{proof}[Proof of Theorem~\ref{thm:Riesz}]
Instead of $\mathcal Y$, consider the sequence 
$$
\mathcal Y_N := \{ Y_{nk}^0 \}_{(n, k) \in J, \, n \le N} \cup \{ \tilde Y_{nk} \}_{n > N, \, k = \overline{1, m}}, \quad N \in \mathbb N,
$$
where
$$
\tilde Y_{nk}(x) := (\cos (n + r_k) x \, T_1 + \sin (n + r_k) x \, T_1^{\perp}) \tilde {\mathcal E}_{nk}, \quad \tilde {\mathcal E}_{nk} := A_k \mathcal E_{nk}.
$$
We will show that $\mathcal Y_N$ is quadratically close to $\mathcal Y$ and, for sufficiently large $N$, the sequence $\mathcal Y_N$ is a Riesz basis in $L_2((0, \pi); \mathbb C^m)$. This will imply that $\mathcal Y$ is also a Riesz basis.

\smallskip

\textbf{Step 1.} 
Let us prove that $\{ \| \mathcal E_{nk} - \tilde {\mathcal E}_{nk}\| \} \in l_2$. This will imply that $\{ \| Y_{nk} - \tilde Y_{nk} \| \} \in l_2$, that is, $\mathcal Y_N$ is quadratically close to $\mathcal Y$.
Fix $k \in \mathcal J$.
Obviously, $\mathcal E_{nk} - \tilde {\mathcal E}_{nk} = A_k^{\perp} \mathcal E_{nk}$, where $A_k^{\perp} := I - A_k$.  Let $\mathcal E_n^{(k)} \in \mathbb C^{m \times |J_k|}$ be the matrix consisting of the columns $\{ \mathcal E_{ns} \}_{s \in J_k}$. By the definitions of $\{ \mathcal E_{ns} \}$ and $B_n^{(k)}$, for each sufficiently large $n$, there exists the matrix $w_n^{(k)} \in \mathbb C^{|J_k| \times m}$ such that $B_n^{(k)} = \mathcal E_n^{(k)} w_n^{(k)}$. Asymptotics~\eqref{asymptB} implies $B_n^{(k)} = O(1)$ as $n \to \iy$. Since $B_{ns} \ge 0$, we also have $B_{ns} = O(1)$, $s \in J_k$, $n \to \iy$. The columns of $\mathcal E_n^{(k)}$ are normalized vectors, so $w_n^{(k)} = O(1)$ as $n \to \iy$. The asymptotic relation~\eqref{asymptB} implies $\mathcal E_n^{(k)} w_n^{(k)} = A_k + K_{nk}$. Hence,
$$
A_k^{\perp} \mathcal E_n^{(k)} w_n^{(k)} (w_n^{(k)})^{\dagger} = K_{nk}.
$$
Consider the minimal singular value $s_{min}(w_n^{(k)})$ of the matrix $w_n^{(k)}$. If $s_{\min}(w_n^{(k)}) \ge \de > 0$ for all sufficiently large $n$, then $\| (w_n^{(k)} (w_n^{(k)})^{\dagger})^{-1} \| \le \frac{1}{\de^2}$. Therefore, $A_k^{\perp} \mathcal E_n^{(k)} = K_{nk}$, so $\{ \| A_k^{\perp} \mathcal E_{nk} \| \} \in l_2$.

\smallskip

\textbf{Step 2.} Let us prove that $s_{min}(w_n^{(k)}) \ge \de > 0$ for all sufficiently large $n$ and a fixed $k \in \mathcal J$. Suppose that, on the contrary, there exists a subsequence $\{ n_j \}$ such that $s_{min}(w_{n_j}^{(k)}) \to 0$ as $j \to \iy$. By virtue of Proposition~\ref{prop:sing}, there exist matrices $\tilde w_{n_j}^{(k)} \in \mathbb C^{|J_k| \times m}$ such that $\rank(\tilde w_{n_j}^{(k)}) < |J_k|$ and $\| w_{n_j}^{(k)} - \tilde w_{n_j}^{(k)} \| \to 0$ as $j \to \iy$. Denote $\tilde B_{n_j}^{(k)} := \mathcal E_{n_j}^{(k)} \tilde w_{n_j}^{(k)}$. Obviously, $\rank (B_{n_j}^{(k)}) < |J_k|$. Note that
$$
\| B_{n_j}^{(k)} - \tilde B_{n_j}^{(k)} \| \le \| \mathcal E_n^{(k)} \| \| w_{n_j}^{(k)} - \tilde w_{n_j}^{(k)} \| \to 0, \quad j \to \iy.
$$
This together with~\eqref{asymptB} imply $\| \tilde B_{n_j}^{(k)} - A_k \| \to 0$ as $j \to \iy$. Proposition~\ref{prop:sing} yields $s_{|J_k|}(A_k) = 0$, but $\rank (A_k) = J_k$. This contradiction concludes the proof.

\smallskip

\textbf{Step 3.} Let us prove that the sequence $\mathcal Y_N$ is complete in $L_2((0, \pi); \mathbb C^m)$ for each sufficiently large $N$. For $k \in \mathcal J$, let $\tilde {\mathcal E}_n^{(k)} \in \mathbb C^{m \times |J_k|}$ be the matrix consisting of the columns $\{ \tilde {\mathcal E}_{ns} \}_{s \in J_k}$. Similarly to step~2, one can show that $s_{min}(\tilde {\mathcal E}_n^{(k)}) \ge \de > 0$ for all sufficiently large $n$, so $\rank(\tilde {\mathcal E}_n^{(k)}) = |J_k|$. Since $\tilde {\mathcal E}_{ns} \in \Ran(A_k)$, $s \in J_k$, it follows that the vectors $\{ \mathcal E_s^0 \}_{s \in J_k}$ are linear combinations of $\{ \tilde {\mathcal E}_{ns} \}_{s \in J_k}$. Consequently, vector functions $\{ Y_{ns}^0 \}_{s \in J_k}$ are linear combinations of $\{ \tilde Y_{ns} \}_{s \in J_k}$ for each sufficiently large fixed $n$ and each $k \in \mathcal J$. By Theorem~\ref{thm:complete}, the sequence $\mathcal Y^0 = \{ Y_{nk}^0 \}_{(n, k) \in J}$ is complete in $L_2((0, \pi); \mathbb C^m)$. Therefore, the sequence $\mathcal Y_N$ is also complete for sufficiently large $N$.

\smallskip

\textbf{Step 4.} Let us prove that the sequence $\mathcal Y_N$ is a Riesz basis in $L_2((0, \pi); \mathbb C^m)$ for sufficiently large $N$, relying on the completeness of this sequence (step~3) and on Proposition~\ref{prop:ineq}. It remains to prove inequality~\eqref{ineq}, which takes the form
\begin{equation} \label{ineq1}
 M_1 \sum_{n, k} |b_{nk}|^2 \le \left\| \sum_{n, k} b_{nk} Y^N_{nk} \right\|^2 \le M_2 \sum_{n, k} |b_{nk}|^2, \quad 
 Y^N_{nk} := \begin{cases}
                Y_{nk}^0, \quad n \le N, \\
                \tilde Y_{nk}, \quad n > N.
            \end{cases}    
\end{equation}
The right-hand side of this inequality is obvious, since $\| Y^N_{nk} \| \le \pi$ for all $(n, k) \in J$. It follows from~\eqref{propAk} that $(Y^N_{nk}, Y^N_{ls}) = 0$ if $n \ne l$ or $r_k \ne r_s$. Hence,
$$
\left\| \sum_{n, k} b_{nk} Y^N_{nk} \right\|^2 = \sum_n \sum_{k \in \mathcal J} \left\| \sum_{s \in J_k} b_{ns} Y^N_{ns} \right\|^2
$$
Consequently, in order to prove the left-hand side of~\eqref{ineq1}, it is sufficient to show that
$$
\left \| \sum_{s \in J_k} a_s \tilde {\mathcal E}_{ns}\right\|^2 \ge \de^2 \sum_{s \in J_k} |a_s|^2, \quad 
\left \| \sum_{s \in J_k} a_s \mathcal E_s^0 \right\|^2 \ge \de^2 \sum_{s \in J_k} |a_s|^2, 
$$
for any $\{ a_s \}_{s \in J_k}$, all $n > N$, $k \in \mathcal J$, and some $\de > 0$. The inequality for $\{ \tilde {\mathcal E}_{ns} \}$ follows from the estimate $s_{min}(\tilde {\mathcal E}_n^{(k)}) \ge \de$, which is valid for sufficiently large $n$. The inequality for $\{ \mathcal E_s^0 \}$ is obvious, since these vectors form an orthonormal basis. Thus, we have proved inequality \eqref{ineq1}, which yields the claim.
\end{proof}

\begin{remark}
Similarly to Theorem~\ref{thm:complete},
it can be proved that the sequence $\mathcal F := \{ \vv(x, \la_{nk}) T_{nk} \mathcal E_{nk} \}_{(n, k) \in J}$ of the vector eigenfunctions of $L$ is complete in $L_2((0, \pi); \mathbb C^m)$. Since $\mathcal F$ is quadratically close to $\mathcal Y$, it follows that $\mathcal F$ is a Riesz basis in $L_2((0, \pi); \mathbb C^m)$.
\end{remark}

\section{Inverse problem}

In this section, we consider the problem $L = L(\sigma, T_1, T_2, H_2)$ of the form~\eqref{eqv}-\eqref{bc2} with $H_1 = 0$ and prove the uniqueness theorem for the following inverse problem.

\begin{ip}
Given the spectral data $\{ \la_{nk}, \al_{nk} \}_{(n, k) \in J}$, find $\sigma$, $T_1$, $T_2$, $H_2$.
\end{ip}

Along with $L$, consider another boundary value problem $\tilde L = L(\tilde \sigma, \tilde T_1, \tilde T_2, \tilde H_2)$ of the same form but with different coefficients. We agree that if a symbol $\ga$ denotes an object related to $L$, then the symbol $\tilde \ga$ with tilde denotes the similar object related to $\tilde L$. Note that the quasi-derivatives for these two problems are supposed to be different: $Y^{[1]} = Y' - \sigma Y$ for $L$ and $Y^{[1]} = Y' - \tilde \sigma Y$ for $\tilde L$. The goal of this section is to prove the following uniqueness theorem.

\begin{thm} \label{thm:uniq}
If $\la_{nk} = \tilde \la_{nk}$, $\al_{nk} = \tilde \al_{nk}$, $(n, k) \in J$, $J = \tilde J$, then 
\begin{equation} \label{transL}
\sigma(x) = \tilde \sigma(x) + H_1^{\diamond} \:\: \text{a.e. on} \:\: (0, \pi), \quad T_1 = \tilde T_1, \quad T_2 = \tilde T_2, \quad H_2 = \tilde H_2 - T_2 H_1^{\diamond} T_2,
\end{equation}
where 
\begin{equation} \label{H1perp}
    H_1^{\diamond} = (H_1^{\diamond})^{\dagger} = T_1^{\perp} H_1^{\diamond} T_1^{\perp}.
\end{equation}
Thus, the spectral data $\{ \la_{nk}, \al_{nk} \}_{(n, k) \in J}$ uniquely specify the problem $L$ up to a transform~\eqref{transL} given by an arbitrary matrix $H_1^{\diamond}$ satisfying~\eqref{H1perp}.
\end{thm}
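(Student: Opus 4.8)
The plan is to use the method of spectral mappings, which reduces the inverse problem to a linear equation in a suitable Banach space of sequences. First I would introduce, along with the Weyl solution $\Phi(x,\la)$ for $L$, the corresponding object $\tilde\Phi(x,\la)$ for $\tilde L$, and form the matrix of spectral mappings $P(x,\la) := \Phi(x,\la)(\tilde\Phi(x,\la))^{-1}$ (or, more precisely, the block matrix built from $\vv, \Phi$ and $\tilde\vv, \tilde\Phi$). The key algebraic fact is that, because $M(\la) \equiv \tilde M(\la)$ follows from the coincidence of the spectral data (the weight matrices $\al_{nk}$ are the residues of $M$, and together with $\la_{nk}$ and the asymptotics of Section~4 they determine $M(\la)$ via a Mittag--Leffler-type expansion), the matrix $P(x,\la)$ is entire in $\la$ for each fixed $x$. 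Here one must be careful that $\vv$ and $\tilde\vv$ are solutions of equations with different quasi-derivatives, so $P$ is defined through $P_{11} = \vv \tilde\Phi^{[1]} - \vv^{[1]} \tilde\Phi$ and similar combinations, where the quasi-derivatives inside are the appropriate ones; the self-adjointness relation $M(\la) = (M(\overline\la))^\dagger$ and Lemma~\ref{lem:relal} are used to control the behavior near the poles.

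Next I would establish the asymptotic estimates for $P(x,\la)$ as $|\rho|\to\iy$ along the rays in $G_\de$. Using Lemma~\ref{lem:estsol} for $\vv^{[\nu]}$, $\Phi^{[\nu]}$ and their tilde-counterparts, together with the estimate~\eqref{W0below} and relation~\eqref{PhiPsi}, one gets $P_{11}(x,\la) = I + o(1)$ and $P_{12}(x,\la) = o(1)$ (the off-diagonal block going to zero faster because of the $\exp(-|\tau|\pi)$ gain), uniformly for $x$ in compact subsets. Combined with the fact that $P(x,\la)$ is entire, Liouville's theorem forces $P_{11}(x,\la) \equiv I$ and $P_{12}(x,\la)\equiv 0$, i.e. $\Phi(x,\la) \equiv \tilde\Phi(x,\la)$ and likewise $\vv(x,\la) \equiv \tilde\vv(x,\la)$ after the appropriate reading. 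At this point the two equations have the same solutions, so from equation~\eqref{eqv} one recovers that $\sigma$ and $\tilde\sigma$ differ by a constant matrix; matching the initial conditions $\vv(0,\la)=T_1=\tilde T_1$, $\vv^{[1]}(0,\la)=T_1^\perp$ (recall $H_1=0$) identifies $T_1=\tilde T_1$, and the constant difference $\sigma-\tilde\sigma=H_1^\diamond$ must then satisfy~\eqref{H1perp} because of the way the $H_1=0$ normalization was imposed (the gauge freedom $\sigma\mapsto\sigma+H_1$, $H_2\mapsto H_2-T_2H_1T_2$ only eats the $T_1^\perp H_1^\diamond T_1^\perp$ part). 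Finally, $\Phi(x,\la)\equiv\tilde\Phi(x,\la)$ together with the boundary condition~\eqref{bc2} and relation~\eqref{PhiPsi} gives $\Psi\equiv\tilde\Psi$ up to the analogous normalization, whence $T_2=\tilde T_2$ and $H_2 = \tilde H_2 - T_2 H_1^\diamond T_2$ by reading off the initial conditions at $x=\pi$.

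The main obstacle I expect is the first step: proving that $M(\la)\equiv\tilde M(\la)$ from the coincidence of the spectral data $\{\la_{nk},\al_{nk}\}$. Unlike the scalar self-adjoint case, $M(\la)$ is not determined by its poles and residues alone without knowing its growth; one needs the structural information that $M(\la)$ is meromorphic with only simple poles (Lemma~\ref{lem:ranks}), that the residues are exactly $\al_{nk}$, and — crucially — the asymptotics of $M$ encoded by Theorem~\ref{thm:asymptal} and the zero-case computation~\eqref{ME0}, to write $M(\la) - M^0(\la)$ (or a regularized difference) as an absolutely convergent series $\sum \al_{nk}/(\la-\la_{nk}) + (\text{explicit})$ and conclude that two such functions with the same data coincide. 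The counting of eigenvalues with multiplicities (index set $J$, parameter $p^\perp$) and the matching of the ``limit'' projectors $A_k$ on both sides — which is automatic once $r_k = \tilde r_k$, guaranteed by $J=\tilde J$ and~\eqref{asymptla} — must be handled carefully so that the series subtraction is legitimate. Once $M\equiv\tilde M$ is in hand, the rest is the standard contour-integral/Liouville machinery of the method of spectral mappings adapted to the matrix and singular-potential setting, essentially as in \cite{Bond-preprint, Bond20}.
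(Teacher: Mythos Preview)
Your overall strategy---spectral mapping matrix, Liouville, then read off the coefficients---matches the paper's proof. However, there is a genuine error in what you identify as the ``first step'': you \emph{cannot} prove $M(\la)\equiv\tilde M(\la)$ from the coincidence of the spectral data, and indeed this is false in general. The Mittag--Leffler expansion (the paper's Lemma~\ref{lem:Weyl}) reconstructs $M(\la)$ from $\{\la_{nk},\al_{nk}\}$ only up to an additive constant matrix; the growth information from Section~4 does not pin this constant down. The non-uniqueness $H_1^{\diamond}$ in the theorem statement is exactly this undetermined constant: two problems related by~\eqref{transL} with $H_1^{\diamond}\ne 0$ have identical spectral data but $M(\la)=\tilde M(\la)+H_1^{\diamond}$. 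The correct (and sufficient) claim is that $M-\tilde M$ is a \emph{constant} matrix. That is already enough to make $P_{11},P_{12}$ entire in $\la$ (since the only possibly singular term $\vv(\tilde M-M)\tilde\vv^{[1]\dagger}$ is then entire), and the Liouville step goes through unchanged. So your ``main obstacle'' is both easier and different from what you expected.

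A secondary ordering issue: you defer $T_1=\tilde T_1$ and $T_2=\tilde T_2$ until after the Liouville argument, but the paper establishes them \emph{first}, via the explicit limits~\eqref{relT1},~\eqref{rkAk} and Algorithm~\ref{alg:T12}. This matters because the asymptotics $P_{11}=I+o(1)$, $P_{12}=o(1)$ rely on Lemma~\ref{lem:estsol}, which compares $\vv,\Phi$ with the zero-case solutions $\vv^0,\Phi^0$; to conclude $\vv-\tilde\vv=o(\cdot)$ and $\Phi-\tilde\Phi=o(\cdot)$ you need $\vv^0=\tilde\vv^0$ and $\Phi^0=\tilde\Phi^0$, i.e.\ $T_1=\tilde T_1$, $T_2=\tilde T_2$ already known. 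Without this prior step the $o(1)$ estimates for the $P_{jk}$ are not available.
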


Theorem~\ref{thm:uniq} is a natural generalization of the known uniqueness results for $m = 1$ in the cases of Dirichlet-Dirichlet, Dirichlet-Robin, Robin-Dirichlet, and Robin-Robin boundary conditions (see \cite{HM-sd, Gul19}).

The inverse is also true: any two problems $L$ and $\tilde L$ satisfying~\eqref{transL} with an arbitrary matrix $H_1^{\diamond}$ of the form~\eqref{H1perp} have equal spectral data. Indeed, in this case, we obtain $M(\la) = \tilde M(\la) + H_1^{\diamond}$, so the poles and the residues of $M(\la)$ and $\tilde M(\la)$ coincide. This together with Theorem~\ref{thm:uniq} immediately imply the following result.

\begin{thm}
If $M(\la) \equiv \tilde M(\la)$, then $\sigma(x) = \tilde \sigma(x)$ a.e. on $(0, \pi)$, $T_1 = \tilde T_1$, $T_2 = \tilde T_2$, $H_2 = \tilde H_2$. Thus, the Weyl matrix $M(\la)$ uniquely specifies the problem $L(\sigma, T_1, T_2, H_2)$.
\end{thm}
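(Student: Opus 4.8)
The plan is to reduce the statement to the already-announced Theorem~\ref{thm:uniq} (spectral data uniqueness), by showing that knowledge of $M(\la)$ is equivalent to knowledge of the spectral data $\{\la_{nk},\al_{nk}\}_{(n,k)\in J}$ \emph{together with} the additional information that the matrix function $M(\la)$ itself (not merely its principal parts) is prescribed. The key observation is that $M(\la)$ is meromorphic with poles exactly at the eigenvalues $\{\la_{nk}\}$ and, by Lemma~\ref{lem:ranks} and the definition of the weight matrices, $\al_{nk}=\Res_{\la=\la_{nk}}M(\la)$. Hence if $M(\la)\equiv\tilde M(\la)$, then first of all $L$ and $\tilde L$ have the same eigenvalues with the same multiplicities, and $\al_{nk}=\tilde\al_{nk}$ for all $(n,k)\in J$; in particular $J=\tilde J$, since $J$ is determined by $p^\perp=\dim(\Ker T_1\cap\Ker T_2)$ and the asymptotics, which are in turn read off from the pole structure. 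Applying Theorem~\ref{thm:uniq} then yields $T_1=\tilde T_1$, $T_2=\tilde T_2$, and the relations
\begin{equation*}
\sigma(x)=\tilde\sigma(x)+H_1^\diamond\ \text{a.e. on }(0,\pi),\qquad H_2=\tilde H_2-T_2H_1^\diamond T_2,
\end{equation*}
for some $H_1^\diamond=(H_1^\diamond)^\dagger=T_1^\perp H_1^\diamond T_1^\perp$.

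It then remains to show that the extra datum $M(\la)\equiv\tilde M(\la)$ forces $H_1^\diamond=0$, so that $\sigma=\tilde\sigma$ and $H_2=\tilde H_2$ outright. For this I would trace how the transform~\eqref{transL} affects the Weyl matrix. Under the substitution $\sigma\mapsto\sigma+H_1^\diamond$ (with $H_1^\diamond$ supported in the ``$T_1^\perp$-block'') the quasi-derivative at $x=0$ changes by $Y^{[1]}(0)\mapsto Y^{[1]}(0)-H_1^\diamond Y(0)$, and correspondingly the functionals $V_1,V_1^\perp$ and the solutions $\psi,\vv,\Phi$ transform in a controlled way. Working through the definitions $\Phi=\psi+\vv M$, $M=V_1^\perp(\Phi)$, together with the normalizations $\psi(0,\la)=-T_1^\perp$, $\psi^{[1]}(0,\la)=T_1$, $\vv(0,\la)=T_1$, $\vv^{[1]}(0,\la)=T_1^\perp+H_1$ (recall $H_1=0$ by our normalization), one finds — as already remarked in the paragraph preceding the theorem — that the two Weyl matrices are related by $M(\la)=\tilde M(\la)+H_1^\diamond$. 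Hence $M\equiv\tilde M$ immediately gives $H_1^\diamond=0$, and then~\eqref{transL} collapses to $\sigma=\tilde\sigma$ a.e., $H_2=\tilde H_2$.

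The main obstacle is the careful bookkeeping in the second step: one must verify that the \emph{same} $H_1^\diamond$ produced by Theorem~\ref{thm:uniq} is exactly the one that appears as the additive shift in $M(\la)-\tilde M(\la)$, and that no further constant ambiguity (e.g.\ in a $T_1$-block of $\sigma$) can arise. This is handled by noting that the ambiguity in Theorem~\ref{thm:uniq} is already pinned down to the form~\eqref{H1perp} — i.e.\ $H_1^\diamond$ lives purely in the $T_1^\perp$-block — and that the computation of $M(\la)$ from $\psi,\vv$ via~\eqref{relPhi}--\eqref{relM} is explicit enough to see that the $T_1^\perp$-block shift of $\sigma$ propagates \emph{precisely} to an additive $H_1^\diamond$ in $M$, with matching normalization. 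A clean way to present this, avoiding recomputation, is simply to invoke the converse direction already stated in the excerpt (''any two problems $L$ and $\tilde L$ satisfying~\eqref{transL} \dots have $M(\la)=\tilde M(\la)+H_1^\diamond$''): combining that identity with $M\equiv\tilde M$ forces $H_1^\diamond=0$, and the theorem follows. Thus the whole argument is: (i) $M\equiv\tilde M$ $\Rightarrow$ equal spectral data and $J=\tilde J$; (ii) Theorem~\ref{thm:uniq} $\Rightarrow$~\eqref{transL} for some $H_1^\diamond$ of the form~\eqref{H1perp}; (iii) the explicit relation $M=\tilde M+H_1^\diamond$ $\Rightarrow$ $H_1^\diamond=0$; (iv) conclude $\sigma=\tilde\sigma$, $H_2=\tilde H_2$.
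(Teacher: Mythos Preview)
Your argument is correct and is exactly the approach the paper takes: the theorem is stated there as an immediate consequence of Theorem~\ref{thm:uniq} together with the converse relation $M(\la)=\tilde M(\la)+H_1^{\diamond}$ established in the preceding paragraph, which is precisely your steps (i)--(iv). There is no additional content in the paper's proof beyond what you have written.
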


For the regular potential $Q \in L_1((0, \pi); \mathbb C^{m \times m})$, Theorem~\ref{thm:uniq} implies the uniqueness without ambiguity. Indeed, consider the boundary value problem $X = X(Q, T_1, T_2, G_1, G_2)$ for equation~\eqref{eqvQ} with $Q \in L_1((0, \pi); \mathbb C^{m \times m})$ and with the boundary conditions
\begin{align*}
   & V_1(Y) = T_1 (Y'(0) - G_1 Y(0)) - T_1^{\perp} Y(0) = 0, \\
   & V_2(Y) = T_2 (Y'(\pi) - G_2 Y(\pi)) - T_2^{\perp} Y(\pi) = 0,
\end{align*}
where $G_j \in \mathbb C^{m \times m}$, $G_j = G_j^{\dagger} = T_j G_j T_j$, $j = 1, 2$.
These conditions are equivalent to~\eqref{bc1} and~\eqref{bc2} when $G_1 = T_1 \sigma(0) T_1$ (if $H_1 = 0$), $G_2 = H_2 + T_2 \sigma(\pi) T_2$. Instead of~\eqref{bc3}, we consider the condition
$$
  V_1^{\perp}(Y) = T_1^{\perp} Y'(0) + T_1 Y(0) = 0.
$$
This condition is equivalent to \eqref{bc3} if $T_1^{\perp} \sigma(0) T_1^{\perp} = 0$, so it fixes the constant $H_1^{\diamond}$ in Theorem~\ref{thm:uniq}. The spectral data $\{ \la_{nk}, \al_{nk} \}_{(n, k) \in J}$ of $X$ are defined similarly to the spectral data of $L$, by using the new $V_1$, $V_2$, and $V_1^{\perp}$. Along with $X$, consider the problem $\tilde X = X(\tilde Q, \tilde T_1, \tilde T_2, \tilde G_1, \tilde G_2)$. For the spectral data of $X$ and $\tilde X$, the following uniqueness theorem directly follows from Theorem~\ref{thm:uniq}. 

\begin{thm} \label{thm:reg}
If $\la_{nk} = \tilde \la_{nk}$, $\al_{nk} = \tilde \al_{nk}$, $(n, k) \in J$, $J = \tilde J$, then $Q(x) = \tilde Q(x)$ a.e. on $(0, \pi)$, $T_1 = \tilde T_1$, $T_2 = \tilde T_2$, $G_1 = \tilde G_1$, $G_2 = \tilde G_2$.
\end{thm}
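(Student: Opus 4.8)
\textbf{Proof proposal for Theorem~\ref{thm:reg}.}

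The plan is to show that Theorem~\ref{thm:reg} is an immediate specialization of Theorem~\ref{thm:uniq} once one pins down the residual ambiguity. By the hypotheses $\la_{nk} = \tilde\la_{nk}$, $\al_{nk} = \tilde\al_{nk}$, $(n,k)\in J = \tilde J$, Theorem~\ref{thm:uniq} applies to the problems $X$ and $\tilde X$ rewritten in the singular form~\eqref{eqv}--\eqref{bc2}. Concretely, for $X$ one sets $\sigma(x) := \int_0^x Q(t)\,dt$ (so that $Q = \sigma'$ in the distributional sense and $\sigma \in L_2$, indeed $\sigma$ is absolutely continuous since $Q \in L_1$); one then has $\sigma(0) = 0$, hence $H_1 = T_1\sigma(0)T_1 = 0$, which is exactly the normalization under which Theorem~\ref{thm:uniq} was proved, and $H_2 = G_2 - T_2\sigma(\pi)T_2$, $G_1 = T_1\sigma(0)T_1 = 0$. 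The same is done for $\tilde X$ with $\tilde\sigma(x) := \int_0^x \tilde Q(t)\,dt$.

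First I would invoke Theorem~\ref{thm:uniq}: it gives $T_1 = \tilde T_1$, $T_2 = \tilde T_2$, and $\sigma(x) = \tilde\sigma(x) + H_1^{\diamond}$ a.e., with $H_1^{\diamond} = (H_1^{\diamond})^{\dagger} = T_1^{\perp} H_1^{\diamond} T_1^{\perp}$ a constant matrix, together with $H_2 = \tilde H_2 - T_2 H_1^{\diamond} T_2$. The second step is to remove the ambiguity $H_1^{\diamond}$. Both $\sigma$ and $\tilde\sigma$ are absolutely continuous with $\sigma(0) = \tilde\sigma(0) = 0$; evaluating $\sigma(x) = \tilde\sigma(x) + H_1^{\diamond}$ at $x = 0$ forces $H_1^{\diamond} = 0$. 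Equivalently, one can argue via the additional boundary condition $V_1^{\perp}(Y) = T_1^{\perp}Y'(0) + T_1 Y(0) = 0$ used to define the weight matrices of $X$: this is equivalent to~\eqref{bc3} precisely when $T_1^{\perp}\sigma(0)T_1^{\perp} = 0$, which holds here since $\sigma(0) = 0$, so the constant $H_1^{\diamond}$ in Theorem~\ref{thm:uniq} is fixed to $0$. The third step is purely bookkeeping: from $H_1^{\diamond} = 0$ we get $\sigma(x) = \tilde\sigma(x)$ a.e., hence $Q = \sigma' = \tilde\sigma' = \tilde Q$ a.e. on $(0,\pi)$; moreover $G_1 = \tilde G_1 = 0$ (both are $T_j\sigma(0)T_j = 0$), and $H_2 = \tilde H_2$ together with $\sigma(\pi) = \tilde\sigma(\pi)$ gives $G_2 = H_2 + T_2\sigma(\pi)T_2 = \tilde H_2 + \tilde T_2\tilde\sigma(\pi)\tilde T_2 = \tilde G_2$.

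The substantive content is entirely in Theorem~\ref{thm:uniq}, which I am assuming; the only genuine (and minor) obstacle here is to make the reduction between the regular form~\eqref{eqvQ} with data $(Q, T_1, T_2, G_1, G_2)$ and the singular form~\eqref{eqv}--\eqref{bc2} with $H_1 = 0$ completely rigorous — i.e. to check that the spectral data $\{\la_{nk}, \al_{nk}\}$ of $X$, defined through the regular $V_1, V_2, V_1^{\perp}$, literally coincide with those of the associated singular problem $L(\sigma, T_1, T_2, H_2)$, so that Theorem~\ref{thm:uniq} transfers. Since $\sigma$ is absolutely continuous with $\sigma(0) = 0$, the quasi-derivative $Y^{[1]} = Y' - \sigma Y$ satisfies $Y^{[1]}(0) = Y'(0)$, so $V_1$ and $V_1^{\perp}$ agree in both formulations; at $x = \pi$ the relation $Y^{[1]}(\pi) = Y'(\pi) - \sigma(\pi)Y(\pi)$ together with the stated identity $G_2 = H_2 + T_2\sigma(\pi)T_2$ makes $V_2$ agree as well. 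Hence the Weyl matrix, its poles and residues — i.e. the spectral data — are the same object, and the conclusion follows.
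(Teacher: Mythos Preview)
Your reduction to Theorem~\ref{thm:uniq} is the right idea and matches the paper's approach, but there is a genuine gap in how you handle the coefficient $G_1$. You set $\sigma(x) := \int_0^x Q(t)\,dt$, so $\sigma(0) = 0$, and then write ``hence $H_1 = T_1\sigma(0)T_1 = 0$'' and later ``$G_1 = T_1\sigma(0)T_1 = 0$''. The second formula is the correct relation from the paper, but it runs in the wrong direction: $G_1$ is \emph{given} data of the problem $X$, not something you may conclude to be zero. With your choice of antiderivative one has $Y^{[1]}(0) = Y'(0)$, so the regular boundary condition $T_1(Y'(0) - G_1 Y(0)) - T_1^{\perp}Y(0) = 0$ becomes $T_1(Y^{[1]}(0) - G_1 Y(0)) - T_1^{\perp}Y(0) = 0$; this means $H_1 = G_1$, not $H_1 = 0$. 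When $G_1 \ne 0$ you are therefore not in the normalization required by Theorem~\ref{thm:uniq}, and the rest of the argument does not apply as written.

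The fix is minor: choose the antiderivative so that $\sigma(0) = G_1$, i.e.\ put $\sigma(x) := G_1 + \int_0^x Q(t)\,dt$ (and likewise $\tilde\sigma(x) := \tilde G_1 + \int_0^x \tilde Q(t)\,dt$). Then $T_1\sigma(0)T_1 = G_1$ gives $H_1 = 0$, and $T_1^{\perp}\sigma(0)T_1^{\perp} = T_1^{\perp}G_1 T_1^{\perp} = 0$ (since $G_1 = T_1 G_1 T_1$) makes the regular $V_1^{\perp}$ coincide with~\eqref{bc3}. Now Theorem~\ref{thm:uniq} yields $\sigma = \tilde\sigma + H_1^{\diamond}$ with $H_1^{\diamond} = T_1^{\perp}H_1^{\diamond}T_1^{\perp}$; applying $T_1^{\perp}(\cdot)T_1^{\perp}$ at $x = 0$ gives $0 = 0 + H_1^{\diamond}$, so $H_1^{\diamond} = 0$. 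Hence $\sigma \equiv \tilde\sigma$, which gives $Q = \tilde Q$ a.e., $G_1 = \sigma(0) = \tilde\sigma(0) = \tilde G_1$, and $G_2 = H_2 + T_2\sigma(\pi)T_2 = \tilde H_2 + T_2\tilde\sigma(\pi)T_2 = \tilde G_2$. With this correction your proof is essentially the paper's.
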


Theorem~\ref{thm:reg} improves the results of \cite{Xu19} and generalizes the uniqueness results from \cite{Yur06-uniq}.

Before the proof of Theorem~\ref{thm:uniq}, let us discuss the construction the Weyl matrix $M(\la)$ by using the spectral data. Fix an arbitrary real $\om > 0$ and put $\be(\la) = \frac{\la}{\la^2 + \om^2}$. Using~\eqref{asymptla} and~\eqref{asymptal}, we get
$$
\left( \frac{1}{\la - \la_{nk}} + \be(\la_{nk})\right) \al_{nk} = O(n^{-2}), \quad n \to \iy.
$$
Consequently, the series
$$
\mathfrak M(\la) := \sum_{(n, k) \in J} \left( \frac{1}{\la - \la_{nk}} + \be(\la_{nk})\right) \al'_{nk}
$$
converges absolutely and uniformly by $\la$ on compact sets. 
The following lemma is proved similarly to \cite[Theorem~1]{BSY13}.

\begin{lem} \label{lem:Weyl}
$M(\la) = \mathfrak M(\la) + C_*$, where $C_* \in \mathbb C^{m \times m}$ is a constant matrix.
\end{lem}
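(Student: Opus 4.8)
The plan is to compare the meromorphic function $M(\la)$ with the series $\mathfrak{M}(\la)$ by showing they have the same singular parts at every eigenvalue and that their difference is a bounded entire function, hence constant by Liouville's theorem. First I would recall, from Lemma~\ref{lem:ranks} and the definition of $\al_{nk}$, that $M(\la)$ is meromorphic with only simple poles, located exactly at the eigenvalues $\{\la_{nk}\}$, and that $\Res_{\la = \la_{nk}} M(\la) = \al_{nk}$. Grouping multiple eigenvalues as in the definition of $\al'_{nk}$, the principal part of $M(\la)$ at a point $\la_{n_1 k_1}$ (the representative of its group) is $\dfrac{\al'_{n_1 k_1}}{\la - \la_{n_1 k_1}}$. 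The individual terms of $\mathfrak{M}(\la)$ have exactly the same principal parts (the added $\be(\la_{nk})$ term is an entire correction ensuring convergence), so $D(\la) := M(\la) - \mathfrak{M}(\la)$ is entire in $\la$; here I would use that $\mathfrak{M}$ converges absolutely and locally uniformly, as already noted in the excerpt, so that it is indeed meromorphic with the stated poles and residues.

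Next I would establish that $D(\la)$ is bounded on the whole plane. The function $\mathfrak{M}(\la)$ is bounded outside discs of fixed radius $\de$ around the poles: this follows by splitting the sum into finitely many ``near'' terms and an $l_2$-tail, using the asymptotics $\rho_{nk} = n + r_k + \varkappa_{nk}$ with $\{\varkappa_{nk}\} \in l_2$ from Theorem~\ref{thm:asymptla} and the asymptotics of $\al_n^{(k)}$ from Theorem~\ref{thm:asymptal}, together with the elementary bound $\big|\frac{1}{\la - \la_{nk}} + \be(\la_{nk})\big| = O(n^{-2})$ on compact $\la$-sets combined with a comparison estimate for large $|\la|$. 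For $M(\la)$ itself, I would use the representation $M(\la) = -(V_2(\vv))^{-1} V_2(\psi)$ together with \eqref{V2}, the lower bound \eqref{W0below} for $(W^0(\rho))^{-1}$ on the region $G_{\de}$, and the estimate \eqref{difW}; this gives that $M(\rho^2)$ is bounded on $G_{\de}$ as $|\rho| \to \iy$. Hence $D(\la)$ is a bounded entire function, so $D(\la) \equiv C_*$ for some constant matrix $C_* \in \mathbb{C}^{m \times m}$, which is the claim.

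The main obstacle I expect is the uniformity of the bound for $\mathfrak{M}(\la)$ near and between the poles --- one must show that the compensating terms $\be(\la_{nk})$ do not accumulate and that, on the circles $|\sqrt{\la} - \rho_{nk}^0| = \de$, the ``resonant'' term $\frac{\al'_{nk}}{\la - \la_{nk}}$ stays bounded uniformly in $n$. This is handled by noting $|\la - \la_{nk}| \ge c\, n\, \de$ on such a circle (since $\la - \la_{nk} = (\rho - \rho_{nk})(\rho + \rho_{nk})$ and $|\rho + \rho_{nk}| \sim 2n$), while $\|\al'_{nk}\| = O(n)$ by \eqref{asymptal}, so the resonant term is $O(1/\de)$, and the remaining terms form a convergent series with an $O(n^{-2})$-type majorant. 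The parallel estimate for $M(\la)$ via \eqref{V2}--\eqref{W0below} is routine once Theorem~\ref{thm:trans} and the asymptotic machinery of Section~4 are in hand. Everything else is bookkeeping: matching principal parts, invoking Liouville, and identifying the constant. This is exactly the scheme of \cite[Theorem~1]{BSY13}, now carried out in the matrix, singular-potential setting.
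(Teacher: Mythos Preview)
Your overall scheme --- show that $D(\la):=M(\la)-\mathfrak{M}(\la)$ is entire and then apply a Liouville-type argument --- is the right one and is exactly what the paper intends (it refers to \cite[Theorem~1]{BSY13}). However, the boundedness claims you make for $M$ and for $\mathfrak{M}$ separately are false in the present setting, and this is not a cosmetic issue: it is precisely the new feature introduced by the general projector $T_1$.

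From \eqref{ME} one has $M(\la)=(T_1+\rho T_1^{\perp})E(\rho)(\rho^{-1}T_1+T_1^{\perp})$ with $E(\rho)=O(1)$ on $G_\de$, so the $T_1^{\perp}$--$T_1^{\perp}$ block contributes a factor $\rho$ and $M(\rho^2)=O(|\rho|)$, not $O(1)$; the paper even records $M^0(-\tau^2)=\tau T_1^{\perp}+o(1)$ just below the lemma. The same growth appears in $\mathfrak{M}(\la)$, since by \eqref{asymptal} one has $\|\al'_{nk}\|=O(n^2)$ (not $O(n)$ as you wrote), so your ``resonant'' estimate on $|\rho-\rho_{nk}^0|=\de$ gives $\|\al'_{nk}\|/|\la-\la_{nk}|=O(n^2)/O(n\de)=O(n/\de)$, which blows up. Thus neither function is bounded on $G_\de$.

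The repair is easy once identified: either (i) show only that $D(\la)=O(|\rho|)=O(|\la|^{1/2})$ on a sequence of expanding contours in $G_\de$ (this follows from the $O(|\rho|)$ bounds for each of $M$ and $\mathfrak{M}$), and then observe that an entire function with growth $O(|\la|^{1/2})$ is constant by the Cauchy estimates; or (ii) factor out $(T_1+\rho T_1^{\perp})$ on both sides and run the bounded-Liouville argument for the reduced quantity $E(\rho)-(\text{corresponding series})$, which is genuinely $O(1)$ on $G_\de$. Either route closes the gap and yields $D\equiv C_*$.
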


Using~\eqref{ME0}, \eqref{defW0}, and~\eqref{defU0}, we obtain the asymptotic formula
$$
M^0(-\tau^2) = \tau T_1^{\perp} + o(1), \quad \tau \to +\iy.
$$
Consequently, the constant matrix $C_*^0$ for $M^0(\la)$ can be found as follows:
\begin{equation} \label{C00}
C^0_* = \lim_{\tau \to +\iy} (\tau T_1^{\perp} - \mathfrak M^0(-\tau^2)).
\end{equation}
In the general case, $M(\la)$ is recovered from the spectral data uniquely up to a constant matrix of the form~\eqref{H1perp}.

Proceed to reconstruction of $T_1$ and $T_2$ by the spectral data. 
Suppose that $\{ \la_{nk}, \al_{nk} \}_{(n, k) \in J}$ are given. Relations~\eqref{sumAk} and~\eqref{asymptal} imply
\begin{equation} \label{relT1}
T_1^{\perp} = \lim_{n \to \iy} n^{-2} \left( \sum_{k = 1}^m \al'_{nk} \right), \quad T_1 = I - T_1^{\perp}.
\end{equation}
Thus, $T_1$ is found. Using~\eqref{asymptla} and~\eqref{asymptal}, we get
\begin{equation} \label{rkAk}
r_k = \lim_{n \to \iy} (\sqrt{\la_{nk}} - n), \quad 
A_k = \frac{\pi}{2}\lim_{n \to \iy} (T_1 + n^{-1} T_1^{\perp}) \al_n^{(k)} (T_1 + n^{-1} T_1^{\perp}), \quad k = \overline{1, m}.
\end{equation}
Construct the spectral data $\{ \la_{nk}^0, \al_{nk}^0 \}_{(n, k) \in J}$ of $L^0$ by the formulas
\begin{equation} \label{sd0}
\rho_{nk}^0 = n + r_k, \quad \la_{nk}^0 = (\rho_{nk}^0)^2, \quad \al_{nk}^0 = \begin{cases}\frac{2}{\pi} (T_1 + \rho_{nk}^0 T_1^{\perp}) A_k (T_1 + \rho_{nk}^0 T_1^{\perp}), \quad \rho_{nk}^0 \ne 0, \\
\frac{1}{\pi}T_1 A_k T_1, \quad \rho_{nk}^0 = 0.
\end{cases}
\end{equation}
Using Lemma~\ref{lem:Weyl} and~\eqref{C00}, one can find $M^0(\la)$. According to~\eqref{ME0}, we have
\begin{equation} \label{relE0}
    E^0(\rho) = (T_1 + \rho^{-1} T_1^{\perp}) M^0(\rho^2) (\rho T_1 + T_1^{\perp}).
\end{equation}
On the other hand, \eqref{defW0}, \eqref{defU0}, and~\eqref{ME0} imply 
\begin{equation} \label{eqE0}
E^0(\rho) = (A \tan (\rho \pi) + B)^{-1} (A - \tan(\rho \pi) B), \quad 
A := T_2 T_1 + T_2^{\perp} T_1^{\perp}, \quad B := T_2^{\perp} T_1 - T_2 T_1^{\perp}.
\end{equation}

\begin{lem} \label{lem:AB}
Consider the equation
\begin{equation} \label{eqAB}
    (t A + B)^{-1} (A - tB) = E,
\end{equation}
with respect to unknown matrices $A, B \in \mathbb C^{m \times m}$. It is supposed that $E \in \mathbb C^{m \times m}$ and $t \in \mathbb C$ are known, $t \ne \pm i$, and $\det(t A + B) \ne 0$.
Equation~\eqref{eqAB} has the solution $\tilde A = E + t I$, $\tilde B = I - t E$ unique up to multiplication by a non-singular matrix: $\tilde A = DA$, $\tilde B = DB$, $\det(D) \ne 0$.
\end{lem}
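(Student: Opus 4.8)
The plan is to verify directly that $\tilde A = E + tI$ and $\tilde B = I - tE$ solve \eqref{eqAB}, and then to show that every solution differs from this one by left multiplication by an invertible matrix. For the first part, I would compute $t\tilde A + \tilde B = t(E + tI) + (I - tE) = (t^2 + 1) I$, which is invertible precisely because $t \ne \pm i$. Likewise $\tilde A - t \tilde B = (E + tI) - t(I - tE) = (1 + t^2) E$. Hence $(t\tilde A + \tilde B)^{-1}(\tilde A - t\tilde B) = \frac{1}{t^2+1} I \cdot (t^2+1) E = E$, so $(\tilde A, \tilde B)$ is indeed a solution.

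For uniqueness up to a left factor, suppose $(A, B)$ is any solution with $\det(tA + B) \ne 0$. Set $D := (tA + B)(t^2 + 1)^{-1}$; this is invertible. From \eqref{eqAB} we have $A - tB = (tA + B) E$, i.e.\ $A - tB = (t^2+1) D E = (t^2+1) D \cdot \frac{1}{t^2+1}(\tilde A - t\tilde B)$... more cleanly: the two equations $tA + B = (t^2+1) D$ and $A - tB = (t^2+1) D E = (t^2+1) D \cdot \frac{\tilde A - t\tilde B}{t^2+1}$ form a linear system for $(A, B)$ in terms of $D$. Solving it — multiply the first by $t$ and add/subtract appropriately — gives $(1 + t^2) A = t(t^2+1)D + (tA+B) E \cdot$ ... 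I would instead just invert the $2\times 2$ block relation: from $tA + B = (t^2+1)D$ and $A - tB = (t^2+1) D E$ one gets $A = D(tI + E) = D\tilde A$ and $B = D(I - tE) = D\tilde B$ by taking the combination $t\cdot(\text{first}) + (\text{second})$ for $A$ (which yields $(t^2+1)A = (t^2+1)D(tI + E)$) and $(\text{first}) - t\cdot(\text{second})$ for $B$ (which yields $(t^2+1)B = (t^2+1)D(I - tE)$). Dividing by $t^2 + 1 \ne 0$ gives the claim with the same $D$.

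The only genuinely delicate point is the role of the hypothesis $t \ne \pm i$: it is exactly what guarantees $t^2 + 1 \ne 0$, which is used both to invert $t\tilde A + \tilde B$ (so that $(\tilde A, \tilde B)$ qualifies as a solution at all) and to divide through when recovering $(A,B)$ from $D$. I would flag that no nondegeneracy of $E$ is needed. I do not anticipate a real obstacle here; the statement is essentially a restatement of the fact that the Möbius-type map $\mathcal{M} \mapsto (t\mathcal{M}_{11} + \mathcal{M}_{12})^{-1}(\mathcal{M}_{11} - t\mathcal{M}_{12})$ depends only on the row span of the $m \times 2m$ block $[\,\mathcal{M}_{11}\ \mathcal{M}_{12}\,]$, so the whole argument is a short computation once the right $D$ is written down.
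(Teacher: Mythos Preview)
Your proof is correct; the paper itself simply states that the lemma ``is proved by direct calculations'' and gives no further detail, so your explicit verification of the particular solution and the linear-algebra recovery of $(A,B)$ from $D=(tA+B)/(t^2+1)$ is exactly the intended argument. The exposition is a bit meandering in the middle (the aborted first attempt before ``more cleanly''), but the final computation---$t\cdot(\text{first})+(\text{second})$ and $(\text{first})-t\cdot(\text{second})$---is clean and complete.
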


Lemma~\ref{lem:AB} is proved by direct calculations.

Fix $\rho_* \ne \rho_{nk}^0$, $n \in \mathbb Z$, $k = \overline{1, m}$, and apply Lemma~\ref{lem:AB} to \eqref{eqE0} with $t := \tan(\rho_* \pi)$. This yields
\begin{align*}
    & D (T_2 T_1 + T_2^{\perp} T_1^{\perp}) = E^0(\rho_*) + \tan (\rho_* \pi) I, \\
    & D (T_2^{\perp} T_1 - T_2 T_1^{\perp}) = I - \tan (\rho_* \pi) E^0(\rho_*),
\end{align*}
where $D \in \mathbb C^{m \times m}$ is an unknown non-singular matrix. Hence,
\begin{equation} \label{DT2}
D T_2 = (E^0(\rho_*) + \tan (\rho_* \pi) I) T_1 + (\tan(\rho_* \pi) E^0(\rho_*) - I) T_1^{\perp} =: D_*.
\end{equation}
Thus, $T_2$ is the matrix of the orthogonal projector onto $\Ran D_*^{\dagger}$. We summarize the arguments above in the following algorithm.

\begin{alg} \label{alg:T12}
Let the spectral data $\{ \la_{nk}, \al_{nk} \}_{(n, k) \in J}$ be given. We have to find $T_1$ and $T_2$.

\begin{enumerate}
    \item Find $T_1$ by~\eqref{relT1}.
    \item Construct the data $\{ \la_{nk}^0, \al_{nk}^0 \}_{(n, k) \in J}$, using~\eqref{rkAk} and~\eqref{sd0}.
    \item Construct $M^0(\la)$ by the formula
$$
M^0(\la) = \mathfrak M^0(\la) + C_*^0, \quad \mathfrak M^0(\la) = \sum_{(n, k) \in J} \left( \frac{1}{\la - \la_{nk}^0} + \be(\la_{nk}^0) \right) {\al_{nk}^0}',
$$
where $C_*^0$ can be found by~\eqref{C00}.
\item Find $E^0(\rho)$ by~\eqref{relE0}.
\item Fix $\rho_* \ne \rho_{nk}^0$, $n \in \mathbb Z$, $k = \overline{1, m}$, and construct the matrix $D_*$ by~\eqref{DT2}.
\item Determine $T_2$ as the matrix of the orthogonal projector onto $\Ran D_*^{\dagger}$ (e.g., by using Gram-Schmidt process).
\end{enumerate}
\end{alg}

\begin{proof}[Proof of Theorem~\ref{thm:uniq}]
Consider two problems $L$ and $\tilde L$ such that $\la_{nk} = \tilde \la_{nk}$, $\al_{nk} = \tilde \al_{nk}$, $(n, k) \in J$, $J = \tilde J$. The matrices $T_1$ and $T_2$ can be uniquely constructed by Algorithm~\ref{alg:T12}, so $T_1 = \tilde T_1$, $T_2 = \tilde T_2$.

Introduce the block matrix of spectral mappings $[P_{jk}(x, \la)]_{j, k = 1, 2}$ of size $(2 m \times 2 m)$ as follows:
$$
\begin{bmatrix}
 P_{11}(x, \la) & P_{12}(x, \la) \\
 P_{21}(x, \la) & P_{22}(x, \la) 
\end{bmatrix}
\begin{bmatrix}
 \tilde \vv(x, \la) & \tilde \Phi(x, \la) \\
 \tilde \vv^{[1]}(x, \la) & \tilde \Phi^{[1]}(x, \la)
\end{bmatrix}
=
\begin{bmatrix}
 \vv(x, \la) & \Phi(x, \la) \\
 \vv^{[1]}(x, \la) & \Phi^{[1]}(x, \la)
\end{bmatrix}.
$$
Recall that the matrix Wronskian $\langle (Y(x))^{\dagger}, Z(x) \rangle$ does not depend on $x$ if $Y$ and $Z$ are solutions of~\eqref{eqv}. Using this fact together with the definitions of $\vv(x, \la)$ and $\Phi(x, \la)$, it is easy to show that
$$
\begin{bmatrix}
 \tilde \vv(x, \la) & \tilde \Phi(x, \la) \\
 \tilde \vv^{[1]}(x, \la) & \tilde \Phi^{[1]}(x, \la)
\end{bmatrix}^{-1} =
\begin{bmatrix}
 (\tilde \Phi^{[1]}(x, \overline{\la}))^{\dagger} & -(\tilde \Phi(x, \overline{\la}))^{\dagger} \\
 -(\vv^{[1]}(x, \overline{\la}))^{\dagger} & (\tilde \vv(x, \overline{\la}))^{\dagger}.
\end{bmatrix}
$$
Consequently, we obtain
\begin{equation} \label{P12}
\left.
\begin{array}{ccccc}
P_{11} & = & \vv (\tilde \Phi^{[1]})^{\dagger} - \Phi (\tilde \vv^{[1]})^{\dagger} & = & I + (\vv - \tilde \vv) (\tilde \Phi^{[1]})^{\dagger} - (\Phi - \tilde \Phi) (\tilde \vv^{[1]})^\dagger \\
P_{12} & = & -\vv \tilde \Phi^{\dagger} + \Phi \tilde \vv^{\dagger} & = & -(\vv - \tilde \vv) \tilde \Phi^{\dagger} + (\Phi - \tilde \Phi) \tilde\vv^{\dagger}
\end{array}
\right\}
\end{equation}
where the appropriate arguments $(x, \la)$ and $(x, \overline{\la})$ are omitted for brevity. On the one hand, relation~\eqref{P12} and Lemma~\ref{lem:estsol} imply
\begin{equation} \label{asymptP}
P_{11}(x, \la) = I + o(1), \quad P_{12}(x, \la) = o(1), \quad |\rho| \to \iy, \quad \rho \in G_{\de},
\end{equation}
for each fixed $x \in [0, \pi]$. On the other hand, using \eqref{relPhi}, \eqref{P12}, and the relation $M(\la) = (M(\overline{\la}))^{\dagger}$, we derive
$$
P_{11} = \vv (\tilde \psi^{[1]})^{\dagger} - \psi (\tilde \vv^{[1]})^{\dagger} + \vv (\tilde M - M) (\tilde \vv^{[1]})^{\dagger}, \quad P_{12} = -\vv \tilde \psi^{\dagger} + \psi \tilde \vv^{\dagger} + \vv (M - \tilde M) \tilde \vv^{\dagger}.
$$
Lemma~\ref{lem:Weyl} says that the Weyl matrix $M(\la)$ can be recovered from the spectral data uniquely up to an additive constant. Hence, $(\tilde M(\la) - M(\la))$ is a constant matrix and the matrix functions $P_{11}(x, \la)$, $P_{12}(x, \la)$ are entire in $\la$ for each fixed $x \in [0, \pi]$. Therefore, asymptotics~\eqref{asymptP} together with Liouville's Theorem yield $P_{11}(x, \la) \equiv I$, $P_{12}(x, \la) \equiv 0$. Consequently, we have $\vv(x, \la) \equiv \tilde \vv(x, \la)$, $\Phi(x, \la) \equiv \tilde \Phi(x, \la)$.

Subtracting $\tilde \ell \tilde \vv = \la \tilde \vv$ from $\ell \vv = \la \vv$, we obtain
\begin{equation} \label{sm11}
((\sigma - \tilde \sigma) \vv)' = (\sigma - \tilde \sigma) \vv'
\end{equation}
a.e. on $(0, \pi)$. In addition, the matrix function $(\sigma - \tilde \sigma) \vv$ is absolutely continuous on $[0, \pi]$ for each fixed $\la$.
The same conclusions are valid for $\Phi$ instead of $\vv$. One can fix $\la$ and choose constant matrices $D_1$, $D_2$ so that 
$$
\det (\vv(x, \la) D_1 + \Phi(x, \la) D_2) \ne 0, \quad x \in [0, \pi].
$$
The matrix function 
$$
 (\sigma(x) - \tilde \sigma(x)) (\vv(x, \la) D_1 + \Phi(x, \la) D_2)
$$
is absolutely continuous with respect to $x \in [0, \pi]$. This implies that $(\sigma(x) - \tilde \sigma(x))$ is absolutely continuous on $[0, \pi]$. Using~\eqref{sm11}, we get $(\sigma - \tilde \sigma)' = 0$ a.e. on $(0, \pi)$. Thus, $\sigma(x) = \tilde \sigma(x) + H_1^{\diamond}$, where $H_1^{\diamond}$ is a Hermitian constant matrix. Using the initial conditions 
$$
\vv(0, \la) = \tilde \vv(0, \la) = T_1, \quad \vv^{[1]}(0, \la) = \tilde \vv^{[1]}(0, \la) = T_1^{\perp},
$$
we conclude that $(\sigma(0) - \tilde \sigma(0)) T_1 = 0$. This yields \eqref{H1perp}. The relation $V_2(\Phi) = \tilde V_2(\Phi)$ implies
$$
T_2 (\tilde H_2 - H_2 - H_1^{\diamond}) \Phi(\pi, \la) = 0.
$$
It follows from~\eqref{PhiPsi} that $\Phi(\pi, \la) = T_2 (V_1(\Psi))^{-1}$. Consequently, we obtain the relation for $H_2$ from~\eqref{transL}. This completes the proof.
\end{proof}

\section{Appendix}

In this section, we show how to represent Sturm-Liouville operators on graphs in the form~\eqref{eqv}-\eqref{bc2}. The Sturm-Liouville eigenvalue problem with singular potentials on the star-shaped graph with $m$ edges of equal length $\pi$ has the form
\begin{gather} \label{eqg}
    -(y_j^{[1]})' - \sigma_j(x_j) y_j^{[1]} - \sigma_j^2(x_j) y_j = \la y_j, \quad x \in (0, \pi), \quad j = \overline{1, m}, \\ \label{bcg}
    y_j(0) = 0, \quad j = \overline{1, m}, \\ \label{mc}
    y_1(\pi) = y_j(\pi), \quad j = \overline{2, m}, \qquad \sum_{j = 1}^m y_j^{[1]}(\pi) = h y_1(\pi),
\end{gather}
where $\{ \sigma_j \}_{j = 1}^m$ are real-valued functions from $L_2(0, \pi)$, $y_j^{[1]} := y_j' - \sigma_j y_j$, $y_j, y_j^{[1]} \in AC[0, \pi]$, $(y_j^{[1]})' \in L_2(0, \pi)$, $j = \overline{1, m}$, $h \in \mathbb R$. Conditions~\eqref{mc} generalize the standard matching conditions, which express Kirchoff's law in electrical circuits, balance of tension in elastic string network, etc. (see \cite{Kuch04, PPP04, BCFK06}).

The problem~\eqref{eqg}-\eqref{mc} is equivalent to \eqref{eqv}-\eqref{bc2} with $\sigma(x) = \diag\{ \sigma_j(x) \}_{j = 1}^m$ (the diagonal matrix with the diagonal entries $\{ \sigma_j(x) \}_{j = 1}^m$), $Y(x) = [y_j(x)]_{j = 1}^m$, $T_1 = 0$, $T_2 = [T_{2, jk}]_{j, k = 1}^m$, $T_{2, jk} = \tfrac{1}{m}$, $j, k = \overline{1, m}$, $H_2 = h T_2$. If there are the mixed boundary conditions instead of the Dirichlet boundary conditions~\eqref{bcg}:
$$
y_j^{[1]}(0) - h_j y_j(0) = 0, \quad h_j \in \mathbb R, \quad j = \overline{1, r}, \qquad
y_j(0) = 0, \quad j = \overline{r + 1, m},
$$
then $T_1 = [T_{1, jk}]_{j, k = 1}^m$, $T_{1, jk} = 1$ if $j = k \le r$ and $T_{1, jk} = 0$ otherwise.

Now consider an arbitrary geometrical graph $\mathcal G$ with the edges $\{ e_j \}_{j = 1}^m$ of equal length $\pi$. If the edge lengths are unequal but rationally dependent, then one can add auxiliary vertices to obtain a graph with equal edge lengths. For every edge $e_j$, $j = \overline{1, m}$, we introduce a parameter $x_j \in [0, \pi]$. Denote the ends of the edge $e_j$ by $w_{2j-1}$ and $w_{2j}$. The value $x_j = 0$ corresponds to the end $w_{2j-1}$ and $x_j = \pi$ corresponds to $w_{2j}$. Every vertex $v$ of the graph $\mathcal G$ is an equivalence class of the ends $w_j$ incident to this vertex:
$v = \{ w_{j_1}, w_{j_2}, \dots, w_{j_r} \}$. For $j = \overline{1, m}$, consider functions $y_j(x_j)$ and $\sigma_j(x_j)$, $x_j \in [0, \pi]$, from the classes described above. Denote
$$
    \begin{array}{ll}
    	y_{|w_{2j-1}} = y_j(0), \quad & y_{|w_{2j}} = y_j(\pi), \\
    	y^{[1]}_{|w_{2j-1}} = -y_j^{[1]}(0), \quad & y^{[1]}_{|w_{2j}} = y_j^{[1]}(\pi),
    \end{array}
    \quad j = \overline{1, m}.
$$	

Consider the Sturm-Liouville eigenvalue problem on the graph $\mathcal G$ given by equations~\eqref{eqg} on the edges and the following matching conditions in the vertices:
\begin{equation} \label{genmc}
\left.
\begin{array}{c}
y_{|w_j} = y_{|w_k}, \quad w_j, w_k \in v  \\
\sum\limits_{w_j \in v} y^{[1]}_{|w_j} = h_v y_{|w_{j_0}}, \quad w_{j_0} \in v
\end{array}
\right\} \quad v \in \mathcal V,
\end{equation}
where $\mathcal V$ is the set of the vertices of $\mathcal G$. Without loss of generality, we may assume that $\mathcal G$ is a bipartite graph, i.e., its vertices can be divided into two disjoint sets $\mathcal V_1$ and $\mathcal V_2$ so that each edge connects two vertices from different sets. To achieve this condition, one can add auxiliary vertices in the middle points of the edges. We may assume that all the vertices from $\mathcal V_1$ correspond to $x_j = 0$ and all the vertices from $\mathcal V_2$ correspond to $x_j = \pi$, i.e., if $w_{2j-1} \in v$, then $v \in \mathcal V_1$ and, if $w_{2j} \in v$, then $v \in \mathcal V_2$.

Fix a vertex $v \in \mathcal V_1$. Let $e_{j_1}$, $e_{j_2}$, \dots, $e_{j_r}$ be the edges incident to $v$. Construct the matrices $T_1^v = [T^v_{1, jk}]_{j, k = 1}^m$ and $H_1^v = h_v T_1^v$, $T_{1, jk}^v = \tfrac{1}{r}$ if $j, k \in \{ j_l \}_{l = 1}^r$ and $T_{1, jk}^v = 0$ otherwise. Put 
$$
T_1 := \sum_{v \in \mathcal V_1} T_1^v, \quad H_1 := \sum_{v \in \mathcal V_1} H_1^v.
$$
One can easily check that $T_1$ is an orthogonal projection matrix and $H_1 = H_1^{\dagger} = T_1 H_1 T_1$. The matrices $T_2$ and $H_2$ are constructed analogously by $\mathcal V_2$, $\sigma(x) := \diag\{ \sigma_j(x) \}_{j = 1}^m$. Then the Sturm-Liouville problem~\eqref{eqg},\eqref{genmc} on the graph $\mathcal G$ is equivalent to \eqref{eqv}-\eqref{bc2} with the constructed matrix coefficients.

Thus, the results of the paper are valid for Sturm-Liouville operators with singular potentials on arbitrary graphs having rationally dependent edge lengths. Matching conditions of other types than~\eqref{genmc} can be treated similarly (see, e.g., \cite{Kuch04, Now07}).

\medskip

\textbf{Acknowledgements.} This work was supported by Grant 19-71-00009 of the Russian Science Foundation.

\medskip

\noindent Natalia Pavlovna Bondarenko \\
1. Department of Applied Mathematics and Physics, Samara National Research University, \\
Moskovskoye Shosse 34, Samara 443086, Russia, \\
2. Department of Mechanics and Mathematics, Saratov State University, \\
Astrakhanskaya 83, Saratov 410012, Russia, \\
e-mail: {\it BondarenkoNP@info.sgu.ru}

\end{document}